\newtheorem*{theo}{Theorem}
\newtheorem{theorem}{Theorem}[section]
\newtheorem{claim}[theorem]{Claim}
\newtheorem{corollary}[theorem]{Corollary}
\newtheorem{definition}[theorem]{Definition}
\newtheorem{lemma}[theorem]{Lemma}
\newtheorem{proposition}[theorem]{Proposition}
\newtheorem{remark}[theorem]{Remark}
\newlength{\espaceavantspecialthm}
\newlength{\espaceapresspecialthm}
\newenvironment{defi*}[1][]{
\vskip \espaceavantspecialthm \noindent \textbf{D\'efinition.} }%
{\vskip \espaceapresspecialthm}
\begin{document}

\sloppy

\title{Groups of smooth diffeomorphisms of Cantor sets embedded in a line}
\author{Dominique Malicet, Emmanuel Militon}
\maketitle

\setlength{\parskip}{10pt}

\selectlanguage{english}

\begin{abstract}
Let $K$ be a Cantor set embedded in the real line $\mathbb{R}$. Following Funar and Neretin, we define the diffeomorphism group of $K$ as the group of homeomorphisms of $K$ which locally look like a diffeomorphism between two intervals of $\mathbb{R}$. Higman-Thompson's groups $V_{n}$ appear as subgroups of such groups. In this article, we prove some properties of this group. First, we study the Burnside problem in this group and we prove that any finitely generated subgroup consisting of finite order elements is finite. This property was already proved by Rover in the case of the groups $V_{n}$. We also prove that any finitely generated subgroup $H$ without free subsemigroup on two generators is virtually abelian. The corresponding result for the groups $V_{n}$ was unknown to our knowledge. As a consequence, those groups do not contain nilpotent groups which are not virtually abelian.  
\end{abstract}

\section{Introduction}

We call \emph{Cantor set} any compact totally disconnected set $K$ such that any point of $K$ is an accumulation point.

When studying the dynamics of an action of a group $G$ on a closed surface, it is convenient to look at minimal subsets of the action. Recall that a $G$-invariant closed nonempty subset $K$ of our surface is a minimal subset for the action of $G$ on our surface if every orbit of points in $K$ is dense in $K$. This is equivalent to saying that $K$ is minimal for the inclusion relation among $G$-invariant closed nonempty subsets. Zorn's lemma ensures that such subsets always exist. A typical case which can occur is the case where this minimal subset turns out to be a Cantor set $K$.

In this article, we will restrict to the case where our Cantor set is embedded in a line, \emph{i.e.} embedded in a one-dimensional submanifold diffeomorphic to $\mathbb{R}$.

We will give two equivalent definitions of the group we are interested in. 

\begin{definition} \label{extrinsic}
Let $r$ be an integer greater than or equal to $1$ or $+ \infty$. Let $K$ be a Cantor set contained in a line $L$ which is $C^{r}$-embedded in a manifold $M$ with $\mathrm{dim}(M) \geq 2$. The group of $C^{r}$-diffeomorphisms of $K$, denoted $\mathfrak{diff}^{r}(K)$, is the group of restrictions to $K$ of $C^{r}$-diffeomorphisms $f$ of $M$ such that $f(K)=K$.
\end{definition}

\begin{remark}
The isomorphism class of this group is independent of the embedding of the line $L$ in $M$ and of the manifold $M$, as long as $\mathrm{dim}(M) \geq 2$. This is a consequence of the second definition below, which is independent of $L$ and $M$, and of the equivalence between the two definitions. However, if we look at the same group in the case where $M$ is a circle, we only obtain a strict subgroup of the latter group as elements of the group have to preserve a cyclic order.
\end{remark}

\begin{remark} If $G$ is a group acting on a manifold $M$ by $C^{r}$-diffeomorphisms with such a Cantor set $K$ as a minimal set, then there exists a nontrivial morphism $G \rightarrow \mathfrak{diff}^{r}(K)$. If we understand well the group $\mathfrak{diff}^{r}(K)$, we can obtain information on which group can act on $M$ with such a minimal invariant set.
\end{remark}

We now give a second definition of our group: the group $\mathfrak{diff}^{r}(K)$ is the group of homeomorphisms of $K$ which locally coincide with a $C^{r}$-diffeomorphism of an open interval of $\mathbb{R}$ (see precise definition below). We will prove the equivalence between the two definitions in Section 2 (see Proposition \ref{equivalence} for a precise statement).

\begin{definition} \label{intrinsic}
Let $K$ be a Cantor set contained in $\mathbb{R}$.  The group $\mathfrak{diff}^{r}(K)$ is the group of homeomorphisms $f$ of $K$ such that, for any point $x$ in $K$, there exists an open interval $I$ of $\mathbb{R}$ containing $x$ and a $C^{r}$-diffeomorphism $\tilde{f}: I \rightarrow \tilde{f}(I)$ such that $\tilde{f}_{|I \cap K} =f_{|I \cap K}$.
\end{definition}

\begin{remark} we can adapt this second definition to other kinds of regularity. For instance, in this article, we will denote by $\mathfrak{diff}^{1+Lip}(K)$ the group of homeomorphisms of $K$ which locally coincide with a $C^{1+Lip}$-diffeomorphism between two intervals of $\mathbb{R}$, that is a $C^{1}$-diffeomorphism $\tilde{f}$ such that $\log(\tilde{f}')$ is Lipschitz continuous.
\end{remark}

The generalizations of those two definitions to the case $r=0$ are not equivalent, but we will use this second definition to define the group $\mathfrak{diff}^{0}(K)$ : it is the group of homeomorphisms of $K$ which coincide locally with a homeomorphism between two intervals of $\mathbb{R}$.

In the article \cite{FN}, Funar and Neretin have computed these groups in many cases and provided examples of Cantor subsets for which these groups are trivial. They have in particular computed this group in the case where $K$ is the standard ternary Cantor subset, which we call $K_{2}$.  Let us recall first the construction of $K_{2}$. Start with the segment $[0,1]$. Cut this interval into three equal pieces $[0,\frac{1}{3}]$, $[\frac{1}{3},\frac{2}{3}]$ and $[\frac{2}{3}, 1]$. Now, throw out the middle segment: we obtain a new compact set $[0,\frac{1}{3}] \cup  [\frac{2}{3}, 1]$. Now remove the middle third of each of these intervals : we obtain the compact subset $[0,\frac{1}{9}] \cup [\frac{2}{9},\frac{1}{3}] \cup [\frac{2}{3},\frac{7}{9}] \cup [\frac{8}{9},1]$. Then repeat the procedure for each of the obtained intervals. We obtain a decreasing sequence of compact subsets : the intersection of this sequence is the subset $K_{2}$ (see Figure \ref{constructionCantor}). More generally, if we remove $n-1$ evenly spaced intervals at each step instead of one, we obtain a Cantor set which we denote by $K_{n}$.

\begin{figure}[ht] 
\begin{center}
\includegraphics[scale=0.5]{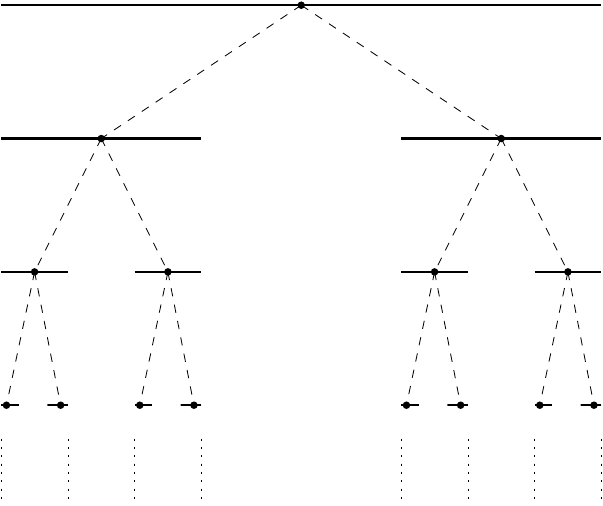}
\caption{The first steps of construction of $K_{2}$ and bijection of the intervals with the vertices of a binary tree}
\label{constructionCantor}
\end{center}
\end{figure}

For convenience, we will call \emph{interval of} $K_{2}$ the intersection of $K_{2}$ with one of the intervals appearing in this construction (this corresponds to cylinders in symbolic dynamics). The set of intervals of $K_{2}$ is a basis of the topology of $K_{2}$. This basis consists of clopen subsets, \emph{i.e.} subsets which are closed and open. The set of intervals of $K_{2}$ is in bijective correspondence with the set of vertices of a rooted binary tree (see Figure \ref{constructionCantor}).

We now give a procedure to construct elements of $\mathfrak{diff}^{\infty}(K_{2})$ (see Figure \ref{diffeoK2} for an example of a diffeomorphism of $K_{2}$).

\underline{Step 1:} Choose two finite partitions of $K_{2}$ by intervals of $K_{2}$ which have the same cardinality. We denote these partitions by $\left\{ I_{i}, \ 1 \leq i \leq m \right\}$ and $\left\{ J_{j}, \ 1 \leq j \leq m \right\}$.\\

\underline{Step 2:} Take a permutation $\sigma \in \mathcal{S}_m$. This enables to define an element $f$ of the group $\mathfrak{diff}^{\infty}(K_{2})$ in the following way. The restriction of $f$ to $I_{i}$ is the unique orientation preserving affine map which sends the interval $I_{i}$ onto the interval $J_{j}$. Such a map sends $I_{i} \cap K_{2}$ to $J_{j} \cap K_{2}$.

\underline{Step 3:} Chose a subset $A$ of the second partition $\left\{ J_{j}, \ 1 \leq j \leq m \right\}$ and flip each of the intervals in $A$, \emph{i.e.} compose the diffeomorphism obtained in Step 2 with the diffeomorphism whose restriction to $J_{j}$ is the identity if $J_{j} \notin A$ and is the symmetry with respect to the midpoint of $J_{j}$ if $J_{j} \in A$.

\begin{figure}[h] 

\begin{center}
\includegraphics[scale=0.5]{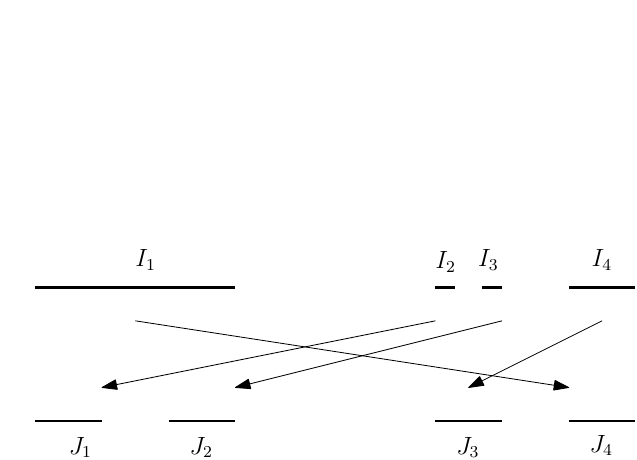}
\caption{An example of diffeomorphism of $K_{2}$}
\label{diffeoK2}
\end{center}

\end{figure}

Of course, we can define a similar algorithm to construct elements of $\mathfrak{diff}^{r}(K_{n})$.

\begin{theo}[Funar-Neretin]
For any $r \geq 1$ or for $r=\infty$, the group $\mathfrak{diff}^{r}(K_{n})$ is the group consisting of elements constructed following the above procedure.
\end{theo}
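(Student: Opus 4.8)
The statement asserts an equality of two sets; the inclusion $\supseteq$ is immediate, since a map produced by the three steps coincides near every point of $K_{n}$ with an affine diffeomorphism between two open intervals of $\mathbb{R}$, hence lies in $\mathfrak{diff}^{\infty}(K_{n})\subseteq\mathfrak{diff}^{r}(K_{n})$. Everything is in the reverse inclusion. Set $\lambda=\tfrac{1}{2n-1}$, so that $K_{n}$ is the attractor of $n$ homotheties of ratio $\lambda$ (arranged with $n-1$ equal gaps) and each \emph{level-$m$ interval} of $K_{n}$ (the intersection of $K_{n}$ with a construction interval obtained after $m$ steps) is an affinely rescaled copy of $K_{n}$ of diameter $\lambda^{m}$.

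First I would localize. Let $f\in\mathfrak{diff}^{r}(K_{n})$. Covering $K_{n}$ by finitely many clopen sets on each of which $f$ agrees with a genuine $C^{r}$-diffeomorphism between open intervals of $\mathbb{R}$, and using that every clopen subset of $K_{n}$ is a finite disjoint union of intervals of $K_{n}$, I can pass to finite partitions $\{I_{i}\}$ and $\{J_{i}=f(I_{i})\}$ of $K_{n}$ by intervals of $K_{n}$ (refining so that each $f(I_{i})$ is itself an interval of $K_{n}$) such that $f_{|I_{i}}$ is the restriction of a $C^{r}$-diffeomorphism $\tilde f_{i}$ defined on a neighbourhood of the convex hull $\widehat{I_{i}}$. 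Each $\tilde f_{i}$ has derivative of constant sign; composing, where this sign is negative, with the central symmetry of $\widehat{J_{i}}$ — which preserves $J_{i}$, $K_{n}$ being symmetric, and is one of the flips allowed in Step 3 — I may assume every $\tilde f_{i}$ is increasing, whence $\tilde f_{i}(\widehat{I_{i}})=\widehat{J_{i}}$. Identifying $\widehat{I_{i}}$ and $\widehat{J_{i}}$ affinely with $[0,1]$ (which carries $I_{i}$ and $J_{i}$ onto $K_{n}$) reduces the theorem to the following: if $g$ is an increasing $C^{1}$-diffeomorphism of a neighbourhood of $[0,1]$ with $g(K_{n})=K_{n}$, then $g$ is affine on each piece of some finite partition of $K_{n}$ by intervals — equivalently, $g_{|K_{n}}$ is produced by the procedure.

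The core of the proof is a bounded-distortion argument combined with the self-similar combinatorics of $K_{n}$. Since $g'$ is continuous and positive on the compact set $[0,1]$, for every $\eta>0$ there is $m_{0}$ such that on every interval of length at most $\lambda^{m_{0}}$ the chord-slopes of $g$ all lie in some interval $[s,(1+\eta)s]$. Hence, for $m\ge m_{0}$, the image $C:=g(W)$ of a level-$m$ interval $W$ is a clopen subset of $K_{n}$ obtained from $K_{n}$ by an increasing homeomorphism whose chord-slopes vary by at most the factor $1+\eta$. The key combinatorial lemma is that for $\eta$ below a threshold $\eta_{0}(n)$ every such $C$ is a single interval of $K_{n}$. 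Indeed, pushing the standard decomposition $K_{n}=\bigsqcup_{j}j(K_{n})$ forward gives $C=\bigsqcup_{j}C^{(j)}$, and the $n-1$ gaps separating consecutive $C^{(j)}$ are, up to the factor $1+\eta$, of one and the same size and strictly larger than all other gaps of $C$; a short computation with the scales $\lambda^{k},\lambda^{k+1},\lambda^{k+2}$ then shows that, $\widehat{C}$ being contained in a smallest construction interval $\widehat{Z}$ of level $k$, each of these $n-1$ gaps contains exactly one of the $n-1$ construction gaps of $\widehat{Z}$; so $C$ meets all $n$ children of $\widehat{Z}$, and each $C^{(j)}$ lies in one child and fills it up to the factor $1+O(\eta)$. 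Since an affine rescaling leaves chord-slope ratios unchanged, each rescaled $C^{(j)}$ satisfies the hypothesis with the \emph{same} $\eta$; iterating, $C$ meets every construction subinterval of $\widehat{Z}$, and as $C$ is a finite union of intervals of $K_{n}$ this forces $C=\widehat{Z}\cap K_{n}$. I expect this approximate self-similar rigidity of $K_{n}$ to be the main obstacle; the comparisons of gap lengths must be kept uniform in $n$, the case $n=2$ being tightest because then $\lambda=\tfrac13$ is largest.

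Granting this, for $m\ge m_{0}$ the map $g$ sends each level-$m$ interval of $K_{n}$ onto a single interval of $K_{n}$. Let $W$ be a level-$m_{0}$ interval. For every interval $V$ of $K_{n}$ contained in $W$, $g$ sends the $n$ children of $V$ to a partition of $g(V)$ into $n$ intervals of $K_{n}$; but a partition of an interval of $K_{n}$ into exactly $n$ subintervals must be the partition into its $n$ children, since by the Kraft identity $\sum n^{-\ell_{j}}=1$ the cardinalities of such partitions are $1,n,2n-1,\dots$. As $g$ is increasing it sends the $j$-th child of $V$ to the $j$-th child of $g(V)$; descending the tree, $g$ becomes the identity on $K_{n}$ once $W$ and $g(W)$ are affinely identified with $[0,1]$, i.e. $g$ is affine on $W\cap K_{n}$. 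Thus $g$ is affine on every level-$m_{0}$ interval, so $g_{|K_{n}}$ is produced by the procedure; unwinding the two reductions above — in particular reinstating the flips absorbed earlier — shows that $f$ is produced by the procedure, completing the proof. Only the $C^{1}$ regularity was used, so this simultaneously proves the statement for all $r\ge1$ (and, with no change, for classes such as $C^{1+Lip}$).
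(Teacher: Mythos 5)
The paper contains no proof of this statement: it is quoted as a theorem of Funar and Neretin, established in \cite{FN} for $n=2$ with the remark that the proof extends to all $n$. So there is no internal argument to compare with, and your proposal must be judged on its own; its core is sound and close in spirit to the distortion arguments of \cite{FN}. The combinatorial rigidity lemma is correct, and your sketch contains the right mechanism: since the endpoints of $\widehat{C}$ lie in distinct children of the minimal construction interval $\widehat{Z}$ (of level $k$), some gap of $C$ contains an entire inter-child gap of length $\lambda^{k+1}$, while every gap of $C$ has length at most $(1+\eta)\lambda\,|\widehat{C}|$; hence $|\widehat{C}|\geq \lambda^{k}/(1+\eta)$, each of the $n-1$ inter-child gaps of $\widehat{Z}$ sits in its own main gap of $C$, the pieces $C^{(j)}$ nearly fill the children, and since the distortion bound survives affine rescaling the argument iterates. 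The Kraft step is also fine: $\sum_{j} n^{-\ell_{j}}=1$ with exactly $n$ terms and all $\ell_{j}\geq 1$ forces all $\ell_{j}=1$, so an $n$-piece partition of an interval of $K_{n}$ by intervals is the partition into children, and order preservation then makes $g$ address-preserving, hence affine, on each deep interval. Only $C^{1}$ is used, as you say. (A small quibble: the tightest case is arguably large $n$ rather than $n=2$, since keeping the endpoints of $\widehat{C}$ inside the extreme children requires roughly $\eta/(1+\eta)<\lambda$; this is immaterial because $\eta_{0}$ is allowed to depend on $n$.)

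The one genuine defect is the localization. The parenthetical claim that the partition $\left\{ I_{i} \right\}$ can be refined so that each $f(I_{i})$ is an interval of $K_{n}$ is given no justification, and it cannot be arranged a priori for a mere homeomorphism of a Cantor set: it is in substance part of what the theorem asserts. Everything that follows in your reduction (assuming $\tilde f_{i}(\widehat{I_{i}})=\widehat{J_{i}}$ with $\widehat{J_{i}}$ a construction interval, flipping $\widehat{J_{i}}$, identifying both hulls with $[0,1]$ so as to get a single $g$ with $g(K_{n})=K_{n}$) rests on that claim, so as written the reduction is circular. Fortunately your main argument never needs it: the key lemma uses only that $C=g(W)$ is clopen in $K_{n}$, which is automatic because $f$ is a homeomorphism of $K_{n}$ and $W$ is clopen. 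So reorder the proof: take finitely many local $C^{1}$ extensions whose domains cover $K_{n}$, choose $m_{0}$ so large that every level-$m_{0}$ interval has its hull inside one such domain with chord-slope distortion below $\eta_{0}(n)$ (precomposing with the central symmetry of the hull when the extension is decreasing), and apply the lemma together with the Kraft step directly to each level-$m_{0}$ interval. This yields that $f$ maps each such interval affinely, up to a flip, onto an interval of $K_{n}$, and these images partition $K_{n}$ --- which is exactly the three-step procedure. With this reordering, and the flips reinstated at this final stage rather than absorbed into the preliminary reduction, the proof is complete.
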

 
Strictly speaking, Funar and Neretin proved only the case $n=2$ but their proof also applies in the case of the Cantor sets $K_{n}$ for $n>2$. Notice that this group does not depend on the regularity $r \geq 1$ : this seems to be a consequence of the ``regular'' shape of this Cantor set.

Inside $\mathfrak{diff}^{r}(K_{n})$, there is a natural subgroup : the subgroup consisting of elements which are constructed using only the first two steps of the above procedure. This subgroup is the well-known Higman-Thompson group $V_{n}$ (see the introduction of the article \cite{BBG} for more background on Higman-Thompson groups).

In this article, we prove some general results about the groups $\mathrm{diff}^{r}(K)$. These theorems suggest that those groups share common features with rank one simple Lie groups. In what follows, we fix a Cantor set $K$ embedded in $\mathbb{R}$.

\subsection{Burnside property}

\begin{definition} A group is \emph{periodic} if any of its elements has finite order.
\end{definition}

In 1902, Burnside asked whether there exist finitely generated periodic groups which are infinite (see \cite{Bur}). Nine years later, Schur managed to prove that any finitely generated periodic group which is a subgroup of $GL_{n}(\mathbb{C})$ has to be finite. Much later, in the 60's, Golod and Shafarevich proved in \cite{GoS} that there exist infinite finitely generated periodic groups. Many more examples were constructed later.

\begin{theorem} \label{Burn}
Any periodic finitely generated subgroup of $\mathfrak{diff}^{1+Lip}(K)$ is finite.
\end{theorem}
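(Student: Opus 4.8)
The plan is to exploit a suitable conjugation-invariant notion of ``derivative'' along the Cantor set and combine it with a Zorn/minimal-set argument. Concretely, suppose $H = \langle g_1, \dots, g_k\rangle$ is a periodic finitely generated subgroup of $\mathfrak{diff}^{1+Lip}(K)$. First I would pass to the extrinsic picture and realize each $g_i$ near each point of $K$ as the germ of an honest $C^{1+Lip}$ diffeomorphism of an interval; since $H$ is finitely generated, there is a uniform bound on the number of ``pieces'' and a uniform modulus (a Lipschitz constant $C$ for the $\log$ of derivatives) controlling every element of $H$ simultaneously, because each $h \in H$ is a word of bounded length in the $g_i^{\pm 1}$ — wait, the word length is \emph{not} bounded. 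This is the crux: I would instead look for a quantity that cannot grow under composition.

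The natural candidate is the multiplicative derivative cocycle. For $h \in \mathfrak{diff}^{1+Lip}(K)$ and $x \in K$, let $h'(x)$ denote the (well-defined, by the local $C^1$ structure) derivative. The key structural fact I would establish is that a \textbf{periodic} element $h$ of order $n$ satisfies $\prod_{j=0}^{n-1} h'(h^j(x)) = 1$ for every $x$, i.e.\ $h$ has ``derivative $1$'' in the cocycle sense at every periodic point of $h$ — but every point is periodic for $h$. Hence each individual $h \in H$ has a full-support set of points where the orbit-product of derivatives is $1$; by a pigeonhole/averaging argument (the orbit of $x$ under $\langle h\rangle$ is finite, so the geometric mean of $\log h'$ over it is $0$, so $\log h'$ takes the value $\le 0$ and the value $\ge 0$), each $h$ has a fixed point of $h$ — no, that's not quite it either. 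The clean statement: since $K$ is compact and $h$ has finite order, there is a point $x_0$ with $h'(x_0) \le 1$ and a point $x_1$ with $h'(x_1) \ge 1$.

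The actual engine should be: pass to a minimal set $M \subseteq K$ for the $H$-action (Zorn), and study the action of $H$ on $M$. On $M$, minimality plus periodicity forces strong rigidity. Here I would invoke the $C^{1+Lip}$ regularity through a Denjoy/Kopell-type argument: a $C^{1+Lip}$ diffeomorphism germ with a hyperbolic-like fixed point generates, together with another element, a free subsemigroup or fails to be periodic; more usefully, the Lipschitz control on $\log h'$ gives, for any $h \in H$ and any $h$-periodic point $p$ of period $n$, the bound $|\log h'(y)| \le C \cdot \mathrm{diam}(\text{orbit component})$ at points $y$ near $p$, and summing the cocycle identity around the periodic orbit shows all nearby derivatives are uniformly close to $1$ with a bound \emph{independent of $n$}. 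This uniformity is what lets me conclude: every $h \in H$ is, near $M$, uniformly $C^1$-close to an isometry in a fixed chart, so $H$ acts on a neighborhood of $M$ with uniformly bounded distortion. A group of uniformly bounded distortion homeomorphisms of a Cantor set that is periodic and finitely generated is finite — this reduces to the known case, essentially $V_n$-type combinatorics (Röver), by showing the distortion bound forces each $h$ to be ``eventually affine'' with denominators drawn from a finite set determined by the generators, so $H$ embeds in a locally finite group, whence finite since finitely generated.

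The main obstacle I anticipate is precisely making the distortion bound \emph{uniform over all of $H$} rather than over a ball of bounded word length: the cocycle identity around periodic orbits gives this for each single element, but I need it for the whole group acting together, which is where the $1+Lip$ (as opposed to merely $C^1$) hypothesis must be used in an essential, quantitative way — I expect a maximal-inequality or telescoping argument over $\langle h \rangle$-orbits, controlling $\sum |\log h'(h^j x) - \log h'(h^j y)|$ by $C \sum d(h^j x, h^j y)$ and then by $C \cdot \mathrm{(total\ length)}$ via injectivity of the orbit, is the technical heart. The reduction of ``bounded distortion periodic f.g. group'' to Röver's theorem on $V_n$ is the second delicate point, requiring one to produce a finite common ``interval structure'' refining the pieces of all elements of $H$ at once.
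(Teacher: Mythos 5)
Your proposal has genuine gaps, and the two places where you yourself hesitate are exactly where it breaks down. The step you call ``the technical heart'' --- a distortion bound uniform over all of $H$, not just over single elements or over a ball of bounded word length --- is never established, and the cocycle identity around the periodic orbits of a single element cannot give it: periodicity of each $h$ controls $h$ along its own orbits, but gives no control on how the Lipschitz constants of distinct elements combine under composition, and no maximal-inequality over $\langle h\rangle$-orbits will bound words mixing several generators. The paper needs no such uniform bound. Its engine is Sacksteder's theorem for finitely generated pseudogroups of $C^{1+Lip}$ diffeomorphisms (this is where the regularity enters): after transferring the $G$-action to a pseudogroup on the line, either some element of $G$ has a hyperbolic fixed point --- impossible in a periodic group, since the derivatives of its iterates at that point form a nonconstant geometric sequence --- or every minimal invariant set is finite. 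You gesture at ``Denjoy/Kopell-type'' arguments and hyperbolic fixed points in passing, but you never actually prove that the minimal set $M$ cannot be a Cantor set, which is the decisive point.

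The closing reduction is also unsupported: the claim that bounded distortion forces each $h$ to be ``eventually affine with denominators drawn from a finite set,'' so that $H$ embeds in a locally finite group, has no argument behind it and is implausible for a general Cantor set $K$, which carries no affine structure; moreover invoking R\"over's theorem is backwards relative to the paper, which obtains R\"over's result on $V_{n}$ as a corollary of Theorem \ref{Burn}, not as an ingredient. What the paper does after the Sacksteder step is local and elementary: near a (finite) minimal set it passes to the finite-index subgroup fixing that set pointwise with derivative $+1$ at each of its points, and observes that an increasing local extension fixing a point but moving a nearby point of $K$ would produce an infinite forward orbit, contradicting periodicity; hence this subgroup acts trivially on a clopen neighbourhood, the action of $G$ on a $G$-invariant clopen neighbourhood factors through the finite quotient, and a compactness/covering argument embeds $G$ into a finite product of finite groups. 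Your outline contains neither this local stability argument nor any substitute for it.
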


We are not able to lower the regularity to $C^{1}$ in this theorem for the moment. However, observe that the same theorem is false in the case of the group of homeomorphisms of a Cantor set as any finitely generated group is a subgroup of this group. To see this, observe that any infinite countable group $G$ acts continuously and faithfully on $\left\{ 0,1 \right\}^{G}$, which is a Cantor set for the product topology and recall that any two Cantor sets are homeomorphic.

We prove Theorem \ref{Burn} in Section 3 of this article.

As a consequence, as the Higman-Thompson groups $V_{n}$ are subgroups of groups of $C^{\infty}$-diffeomorphisms of Cantor sets, we have a new proof of the following theorem by Rover (see \cite{Rov}).

\begin{theo}[Rover]
Let $n \geq 2$. Any finitely generated periodic subgroup of $V_{n}$ is finite.
\end{theo}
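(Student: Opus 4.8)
The plan is to obtain the statement as an immediate corollary of Theorem~\ref{Burn}, the only work being to realize $V_n$ as a subgroup of $\mathfrak{diff}^{1+Lip}(K_n)$. First I would recall the realization already described in the introduction: $V_n$ is, by definition, the subgroup of $\mathfrak{diff}^{\infty}(K_n)$ consisting of those elements produced by Steps~1 and 2 of the construction, i.e. the homeomorphisms of $K_n$ that, with respect to a suitable pair of finite clopen partitions of $K_n$ into intervals of $K_n$, are given piecewise by orientation-preserving affine maps between intervals of $\mathbb{R}$. (Equivalently, this is the standard model of the Higman--Thompson group $V_n$ as the group of almost-automorphisms of the $n$-regular rooted tree acting faithfully on its boundary $K_n$; the correspondence between intervals of $K_n$ and vertices of the tree is the one indicated in Figure~\ref{constructionCantor}.) In particular this exhibits $V_n$ as a genuine (abstract) group isomorphic to a subgroup of $\mathfrak{diff}^{\infty}(K_n)$.

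Next I would observe that such an element is, on a neighbourhood of each point of $K_n$, the restriction of an affine diffeomorphism of an open interval of $\mathbb{R}$; since the derivative of an affine map is a positive constant, $\log$ of the derivative is locally constant, hence Lipschitz, so each such element lies in $\mathfrak{diff}^{1+Lip}(K_n)$. Thus $V_n \leq \mathfrak{diff}^{\infty}(K_n) \leq \mathfrak{diff}^{1+Lip}(K_n)$ (by the Funar--Neretin theorem one in fact has $\mathfrak{diff}^{\infty}(K_n) = \mathfrak{diff}^{1+Lip}(K_n)$, but only the inclusion is needed here).

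Finally, let $H$ be a finitely generated periodic subgroup of $V_n$. Via the embedding above, $H$ is a finitely generated periodic subgroup of $\mathfrak{diff}^{1+Lip}(K_n)$, so Theorem~\ref{Burn} applies and gives that $H$ is finite. I do not expect any genuine obstacle in this argument: all of the difficulty is concentrated in Theorem~\ref{Burn}, and the only point that deserves a sentence of justification is the identification of the abstract group $V_n$ with a group of $C^{1+Lip}$-diffeomorphisms of $K_n$, which is precisely the description of $V_n$ recalled in the introduction.
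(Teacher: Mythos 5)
Your proposal is correct and is exactly the paper's route: the authors deduce Rover's theorem immediately from Theorem~\ref{Burn}, using the fact that $V_{n}$ (the elements produced by Steps~1 and~2 of the construction) is a subgroup of $\mathfrak{diff}^{\infty}(K_{n})$, hence of $\mathfrak{diff}^{1+Lip}(K_{n})$. Your extra sentence checking that piecewise affine maps give $C^{1+Lip}$ (indeed $C^{\infty}$) elements is just a spelled-out version of what the paper leaves implicit.
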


The proof of Theorem \ref{Burn} relies on an adaptation of standard $1$-dimensional tools from dynamical systems, namely Sacksteder's theorem and the Thurston stability theorem.

\subsection{Subgroups without free subsemigroups on two generators}

In this section, we will consider finitely generated subgroups of the group $\mathfrak{diff}^{1+Lip}(K)$ without free subsemigroups on two generators. This includes all finitely generated groups of subexponential growth. In particular, this includes
nilpotent finitely generated subgroups, which we define below. 

Fix a group $G$. If $H$ and $H'$ are subgroups of $G$, we define $[H,H']$ as the subgroup of $G$ generated by elements of the form $[h,h']=hh'h^{-1}h'^{-1}$, where $h \in H$ and $h' \in H'$.

We define a sequence $(G_{n})_{n \geq 1}$ of subgroups of $G$ by the following relations:
$$G_{1}=G$$
$$ \forall n \geq 1, G_{n+1}=[G_{n},G].$$

A group $G$ is said to be \emph{nilpotent} if there exists $n \geq 1$ such that $G_{n}=\left\{ 1 \right\}$. If the group $G$ is nilpotent and nontrivial, its \emph{degree} is the smallest integer $n$ such that $G_{n}$ is nontrivial and $G_{n+1}$ is trivial. A typical example of finitely generated nilpotent group is the Heisenberg group $\mathcal{H}$ with integer coefficient. This group $\mathcal{H}$ is the group of upper unitriangular $3 \times 3$ matrices with integer coefficients.

The following theorem states that subgroups of $\mathfrak{diff}^{1+Lip}(K)$ without free subsemigroups on two generators are close to being abelian.

\begin{theorem} \label{nosubsemi}
Let $G$ be a finitely generated subgroup of $\mathfrak{diff}^{1+Lip}(K)$ without free subsemigroups on two generators. Then the group $G$ is virtually abelian.
\end{theorem}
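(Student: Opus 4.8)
The plan is to exploit the local linearizability of elements of $\mathfrak{diff}^{1+Lip}(K)$ together with standard one-dimensional dynamical tools, in the same spirit as the proof of Theorem \ref{Burn}, but now producing an abelian-up-to-finite-index structure rather than finiteness. The first step is to reduce to a situation where the action has no "exceptional" hyperbolic behaviour: using a Sacksteder-type argument, I would show that if $G$ has no free subsemigroup on two generators, then $G$ cannot contain an element with a hyperbolic (contracting or expanding) fixed point on $K$ whose basin interacts nontrivially with the action of another element — more precisely, that the existence of such "ping-pong" data yields a free subsemigroup, contradicting the hypothesis. This should force every element of $G$ to have derivative $1$ at each of its fixed points on $K$, i.e. the action is, in an appropriate sense, "parabolic" or non-expansive everywhere. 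Along the way I expect to need a careful local analysis: near a point $x\in K$, $g\in G$ coincides with a genuine $C^{1+\mathrm{Lip}}$ diffeomorphism $\tilde g$ of an interval, and one can speak of $\tilde g'(x)$; the delicate point is that this germ is only well-defined up to the behaviour on $K$, so I must check that the relevant derivative data (in particular whether $\tilde g'(x)=1$) is intrinsic.

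Once hyperbolicity is excluded, the second step is a Thurston-stability-type argument. The Thurston stability theorem says that a group of germs of $C^1$ diffeomorphisms fixing a point, all with derivative $1$ there, admits a nontrivial homomorphism to $(\mathbb{R},+)$ unless it is trivial; more relevantly here, it constrains such groups severely. I would localize the $G$-action: pick a point $x\in K$, consider the stabilizer $G_x$ and the germs at $x$, and use the derivative-$1$ condition together with the absence of free subsemigroups to extract an abelian quotient. Iterating over a finite $G$-invariant structure on $K$ (a finite partition into clopen pieces respected by a finite-index subgroup, using that the action on the "combinatorial" level resembles that of a Higman–Thompson group $V_n$) should let me assemble these local abelian pieces into a global statement: a finite-index subgroup $G_0\le G$ maps to an abelian group with controlled kernel, and the kernel, acting with trivial germs everywhere, is itself abelian (or trivial) by another application of Thurston stability.

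The main obstacle, I expect, is the middle step: going from "no hyperbolic fixed points / all derivatives $1$ at fixed points" to an actual abelian finite-index subgroup. In the classical setting of groups acting on $S^1$ or $\mathbb{R}$ this is exactly where Thurston stability, together with an analysis of the fixed-point sets and the Kopell lemma (which requires $C^2$, or $C^{1+\mathrm{bv}}$/$C^{1+\mathrm{Lip}}$ — hence the regularity hypothesis in the theorem), is used to show centralizers are large and the group is metabelian or nilpotent-by-finite, and then the no-free-subsemigroup hypothesis upgrades this to virtually abelian. Transporting the Kopell lemma to the Cantor setting is the technically sensitive part: one needs that two commuting elements with a common contracted clopen interval of $K$, trivial germs, and $C^{1+\mathrm{Lip}}$ local extensions cannot both be nontrivial, which should follow by extending the maps to actual intervals and invoking the classical Kopell lemma, but one must be careful that the extensions can be chosen commuting, or argue directly on $K$. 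I would also need the combinatorial input — that finitely generated subgroups of the "$V_n$-like" part of $\mathfrak{diff}^{1+\mathrm{Lip}}(K)$ without free subsemigroups are virtually abelian at the level of the action on the tree of clopen intervals — which plausibly reduces to known facts about $V_n$ and its subgroups, or can be proved by a direct ping-pong argument on the Cantor set. Assembling these pieces, and handling the orientation-reversing ("flip") part of the group, which contributes at most a finite $2$-group extension, completes the argument.
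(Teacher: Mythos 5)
Your first reduction is already false: the absence of free subsemigroups does not force elements of $G$ to have derivative $1$ at their fixed points. A single diffeomorphism with a hyperbolic fixed point on $K$ generates $\mathbb{Z}$, which has no free subsemigroup, so no Sacksteder/ping-pong argument can exclude hyperbolic elements from $G$ --- and indeed the paper never does. What Sacksteder plus ping-pong actually yields (Lemma \ref{minimalfinite}) is that every \emph{minimal invariant set} of $G$ is finite; the paper then takes $F=\bigcap_{g\in G}\mathrm{Per}(g)$, applies the generalized Burnside statement (Proposition \ref{Burngen}) to the restriction $G(F)$, and obtains a finite-index subgroup $G_{1}$ whose minimal sets are fixed points with positive derivatives. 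Hyperbolic-like elements are not excluded; on the contrary, elements with no fixed point near a point of $\mathrm{Fix}(G_{1})$ (Corollary \ref{nofixedpoint}) are an essential tool in the subsequent ping-pong and distortion arguments. Your ``parabolic everywhere'' picture therefore cannot be the starting point, and Thurston stability --- which only produces a homomorphism to $(\mathbb{R},+)$ for derivative-one germ groups --- is not the engine that gives abelianity here. The paper's second step instead proves that every element of $G_{1}'$ fixes a neighbourhood of $\mathrm{Fix}(G_{1})$, via H\"older-type arguments with bi-invariant Archimedean orders on germ groups (Lemma \ref{Holder}), a Kopell lemma adapted to the Cantor set proved by direct distortion estimates (Lemma \ref{Kopell}), Rosenblatt's theorem upgrading metabelian-without-free-subsemigroup to nilpotent, the notion of crossed elements, and the key distortion estimate (Lemma \ref{distortion}) feeding the free-semigroup criterion (Lemma \ref{semifree}); none of this machinery, nor a substitute for it, appears in your outline.

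The second structural gap is your appeal to a ``finite $G$-invariant partition into clopen pieces'' on which the action looks like a Higman--Thompson group, and to ``known facts about subgroups of $V_{n}$ without free subsemigroups.'' For a general Cantor set $K\subset\mathbb{R}$ and a general finitely generated subgroup of $\mathfrak{diff}^{1+Lip}(K)$ there is no reason such an invariant combinatorial structure exists, and the statement you want to import for $V_{n}$ is precisely the Corollary the paper deduces \emph{from} Theorem \ref{nosubsemi} and explicitly describes as previously unknown --- so this part of your plan is circular. Finally, the claim that the orientation-reversing elements contribute only a finite $2$-group extension is unjustified in this generality; the paper handles orientation issues pointwise, by passing to the finite-index subgroup of elements with positive derivative along the relevant finite fixed set (inside Proposition \ref{Burngen} and Proposition \ref{minpoint}), not by a global flip quotient.
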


Recall that, by definition, a group is virtually abelian if it contains a finite-index subgroup which is abelian. We cannot hope for a better conclusion in Theorem \ref{nosubsemi}. Indeed, take $K=K_{2}$ the standard ternary Cantor set. Let $S$ be the group of diffeomorphisms of $K_{2}$ which permute the intervals $[0,\frac{1}{9}]$, $[\frac{2}{9},\frac{1}{3}]$, $[\frac{2}{3},\frac{7}{9}]$, $[\frac{8}{9},1]$  and whose restriction to each of these intervals is orientation-preserving and affine. The group $S$ is isomorphic to the finite group $\mathcal{S}_{4}$, the symmetric group on $4$ elements. Take also an infinite order element $f$ of $\mathfrak{diff}^{\infty}(K_{2})$ which is supported in $[0,\frac{1}{9}]\cap K_{2}$, meaning that it pointwise fixes the points outside $[0,\frac{1}{9}]\cap K_{2}$. Then the subgroup of $\mathfrak{diff}^{\infty}(K_{2})$ generated by $f$ and $S$ is virtually abelian : it contains the group $\mathbb{Z}^4$ as a finite index subgroup. But it is not abelian. Notice that, with this kind of construction, we can obtain any virtually abelian group as a subgroup of a group of diffeomorphisms of a Cantor set. 

\begin{remark} Observe that finitely generated groups of subexponential growth contain no free subsemigroup on two generators. Hence subgroups of $\mathfrak{diff}^{1+Lip}(K)$ of subexponential growth are virtually abelian. \end{remark}

\begin{remark} Notice that Theorem \ref{nosubsemi} implies Theorem \ref{Burn} as periodic groups contain no free subsemigroup on two generators and finitely generated abelian periodic groups are known to be finite. However, we use Theorem \ref{Burn} (and even a stronger version of it which is Proposition \ref{Burngen}) to prove Theorem \ref{nosubsemi}. That is why we will first prove Theorem \ref{Burn} in this article.
\end{remark}

\begin{remark} In Theorem \ref{nosubsemi}, the regularity cannot be lowered to a $C^1$ regularity. Indeed, Farb and Franks proved in \cite{FF} that any torsion-free finitely generated nilpotent group is a subgroup of the group of homeomorphisms of the compact interval $[0,1]$. For each nilpotent subgroup that they construct, it is possible to find a Cantor subset which is invariant. Hence any torsion-free finitely generated nilpotent group is a subgroup of the group of $C^{1}$ diffeomorphisms of some Cantor set.\end{remark}

If we only assumed that our group contained no free subgroups, we could not prove that our group is virtually abelian. Indeed, the Thompson group $F$ is contained in $V_{2}$ and is hence a subgroup of the group of diffeomorphisms of the standard ternary Cantor set. But this group is finitely generated, contains no free subgroup on two generators and is not abelian. For more information about the group $F$ and references for proofs of these results, see Section 1.5 of \cite{Nav}. The best we can hope for finitely generated subgroups without a free subgroup on two generators is that they have a finite orbit. We will explore this question in a forthcoming article.

We prove Theorem \ref{nosubsemi} in Section 4 of this article.

Observe that the corresponding statement for Higman-Thompson's groups $V_{n}$ was unknown, as far as we know. We state it as a corollary.

\begin{corollary}
Let $n \geq 2$. Any subgroup of the group $V_{n}$ without free subsemigroup on two generators is virtually abelian.
\end{corollary}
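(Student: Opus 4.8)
The plan is to derive the corollary from Theorem \ref{nosubsemi}. First I would record that $V_n$ sits inside $\mathfrak{diff}^{1+Lip}(K_n)$: every element of $V_n$ arises from Steps~1 and~2 of the construction recalled above, hence coincides locally with an orientation-preserving affine map of an interval, which is in particular a $C^{1+Lip}$-diffeomorphism (the logarithm of its derivative is constant, hence Lipschitz). Consequently a subgroup $G \le V_n$ with no free subsemigroup on two generators is a subgroup of $\mathfrak{diff}^{1+Lip}(K_n)$ with no free subsemigroup on two generators, and Theorem \ref{nosubsemi} applies to each finitely generated subgroup $H \le G$, showing every such $H$ is virtually abelian. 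If $G$ is itself finitely generated the corollary is immediate; the substance of the general statement is the passage to an arbitrary subgroup.

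To make that passage I would not use Theorem \ref{nosubsemi} as a black box but extract from its proof a dynamical description of the abelian finite-index subgroup. The natural form is: a finitely generated $H$ without free subsemigroup fixes a finite set $F \subset K$ (for instance a finite orbit, or the endpoints of the finitely many local pieces on which its action is tame), and on the finite-index stabilizer $H_0 = \ker\bigl(H \to \mathrm{Sym}(F)\bigr)$ the assignment $h \mapsto (\log h'(x))_{x \in F}$ is a homomorphism into an abelian group. Injectivity of this homomorphism should follow from the same Sacksteder and Thurston-stability mechanisms used for Theorem \ref{Burn}: a nontrivial element of $H_0$ fixing $F$ with all derivatives equal to $1$ there would create a contracting fixed point and hence exponential, free-subsemigroup dynamics, a contradiction. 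This would realize $H_0$ as abelian and the finite quotient $H/H_0$ as a group of permutations of $F$.

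The aim is then to globalize: to produce a single finite set $F_G$ invariant under all of $G$, to set $G_0 = \ker\bigl(G \to \mathrm{Sym}(F_G)\bigr)$, and to conclude that $G_0$ is abelian of finite index by the argument above. The crux, and the step I expect to be hardest, is to control the permutation part uniformly: the finite quotients $H/H_0$ attached to the finitely generated subgroups of $G$ must be shown to have bounded order, equivalently that $G$ cannot realise arbitrarily large finite permutation groups in its local action near an accumulation point of $K$. This is where the finite-prefix combinatorial rigidity of $V_n$ must be exploited, and where the general (not finitely generated) case genuinely departs from the finitely generated one; once a global finite-index abelian subgroup $G_0 \le G$ is obtained, $G$ is virtually abelian and the corollary follows.
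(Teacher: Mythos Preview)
The paper gives no separate proof of this corollary: it is presented as an immediate consequence of Theorem~\ref{nosubsemi} together with the embedding $V_n \subset \mathfrak{diff}^{\infty}(K_n) \subset \mathfrak{diff}^{1+Lip}(K_n)$. Your first paragraph is exactly this, and that is all the paper intends.

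Your second and third paragraphs go well beyond what the paper does, and they are not a proof. You are right to notice a discrepancy: Theorem~\ref{nosubsemi} is stated for \emph{finitely generated} subgroups, while the corollary as printed drops that hypothesis. The paper simply does not address this; the most likely reading is that the authors intend the corollary for finitely generated subgroups as well, in parallel with the theorem it is derived from. Your attempt to bridge the gap is only a plan: you yourself flag the ``crux'' and ``hardest step'' as unproven, and the sketch you give does not go through as written. In particular, the map $h \mapsto (\log h'(x))_{x\in F}$ on the stabilizer $H_0$ is not what the proof of Theorem~\ref{nosubsemi} produces, and your proposed injectivity argument is wrong: an element of $H_0$ with derivative $1$ at every point of $F$ need not have a hyperbolic fixed point anywhere, so no ``contracting dynamics'' contradiction arises from Sacksteder or Thurston stability. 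The actual mechanism in the paper (Proposition~\ref{periodic}) shows that the derived subgroup $G_1'$ is \emph{globally trivial} on $K$, which is a different and stronger statement than injectivity of a local derivative cocycle. If you genuinely want the non-finitely-generated case, you need a real argument (for instance, a uniform bound on the index of the abelian subgroup, or a local--global compactness argument specific to $V_n$); what you have written is a wish list, not a proof.
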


As the derived subgroup of a finitely generated nilpotent group is finitely generated and has no free subsemigroups on two generators, we obtain the following statement.

\begin{corollary} \label{nilpotent}
Any finitely generated nilpotent subgroup of $\mathfrak{diff}^{1+Lip}(K)$ is virtually abelian.
\end{corollary}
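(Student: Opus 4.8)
The plan is to obtain Corollary~\ref{nilpotent} directly from Theorem~\ref{nosubsemi}. A virtually abelian group contains no free subsemigroup on two generators, so the only thing one has to check is that a finitely generated nilpotent group $G$ cannot contain such a subsemigroup; once this is established, Theorem~\ref{nosubsemi} applies verbatim to a finitely generated nilpotent subgroup $G$ of $\mathfrak{diff}^{1+Lip}(K)$ and yields that $G$ is virtually abelian.

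To see that a finitely generated nilpotent group $G$ has no free subsemigroup on two generators, I would use that $G$ has polynomial growth. This is classical and elementary: $G$ is polycyclic, so every subgroup of $G$ is finitely generated; in particular the derived subgroup $[G,G]$ is finitely generated and the successive quotients of the lower central series are finitely generated abelian, and bounding the growth of $G$ by the product of the growths of these quotients one gets a polynomial bound by induction on the nilpotency class. Equivalently, and this is what the remark preceding the statement alludes to, one can run an induction on the nilpotency class of $G$: the derived subgroup $[G,G]$ is a finitely generated nilpotent subgroup of $\mathfrak{diff}^{1+Lip}(K)$ of strictly smaller class, hence virtually abelian by the induction hypothesis and thus of polynomial growth, and since $G/[G,G]$ is finitely generated abelian, $G$ itself has polynomial growth (polynomial growth being inherited by extensions of finitely generated groups of polynomial growth). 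Now if elements $u,v\in G$ generated a free subsemigroup, the $2^{n}$ distinct positive words of length $n$ in $u$ and $v$ would all lie in the ball of radius $Cn$ of $G$ for a fixed finite generating set of $G$ (with $C$ bounding the lengths of $u$ and $v$), and for $n$ large this contradicts the polynomial bound on the size of these balls.

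I do not expect a genuine obstacle: this corollary is a formal consequence of Theorem~\ref{nosubsemi}, with the elementary subexponential growth of finitely generated nilpotent groups as the only additional ingredient. The one point to be careful about is that Theorem~\ref{nosubsemi} must be applied to $G$ itself, not merely to $[G,G]$: the implication ``$[G,G]$ virtually abelian'' $\Rightarrow$ ``$G$ virtually abelian'' fails for abstract groups, as the integer Heisenberg group $\mathcal{H}$ shows (its derived subgroup is $\mathbb{Z}$, yet $\mathcal{H}$ is not virtually abelian). In fact, applied to $G=\mathcal{H}$, Corollary~\ref{nilpotent} asserts that $\mathcal{H}$ does not embed into $\mathfrak{diff}^{1+Lip}(K)$, which is a nontrivial constraint on this group.
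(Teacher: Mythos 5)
Your proposal is correct and takes essentially the same route as the paper: Corollary \ref{nilpotent} is deduced formally from Theorem \ref{nosubsemi}, the only additional ingredient being that a finitely generated nilpotent group has polynomial (hence subexponential) growth and so contains no free subsemigroup on two generators. Your caveat that the theorem must be applied to $G$ itself rather than to $[G,G]$ is well taken, since the paper's one-line justification is phrased in terms of the derived subgroup, but the intended deduction is exactly the one you give.
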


In particular, there is no Heisenberg group with integer coefficients as subgroup of $\mathfrak{diff}^{r}(K)$ for $r \geq 2$. 

Higman-Thompson groups $V_{n}$ were already known to satisfy this corollary by a result by Bleak, Bowman, Gordon Lynch, Graham, Hughes, Matucci and Sapir \cite{BBG} about distorted cyclic subgroups.

As a consequence of this corollary, we obtain the following statement, which is related to the Zimmer conjecture.

\begin{theorem}
Let $r \geq 2$ and $\Gamma$ be a finite index subgroup of $SL_{n}(\mathbb{Z})$ (or any almost simple group which contains a nonabelian nilpotent group whose derived subgroup is infinite). Any morphism $\Gamma \rightarrow \mathfrak{diff}^{r}(K)$ has a finite image. 
\end{theorem}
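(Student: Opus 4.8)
The plan is to reduce the statement to Corollary \ref{nilpotent} together with the Margulis normal subgroup theorem, so that all the one-dimensional dynamics is packaged into the already-proved results. Since $r \geq 2$ we have $C^{r} \subset C^{1+Lip}$, hence $\mathfrak{diff}^{r}(K)$ is a subgroup of $\mathfrak{diff}^{1+Lip}(K)$ and Corollary \ref{nilpotent} applies to every finitely generated nilpotent subgroup of $\mathfrak{diff}^{r}(K)$. In particular, for $r \geq 2$ the group $\mathfrak{diff}^{r}(K)$ contains no copy of a finitely generated nilpotent group all of whose finite index subgroups have infinite derived subgroup; the integer Heisenberg group $\mathcal{H}$ is such a group.

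Next I would fix a morphism $\varphi : \Gamma \rightarrow \mathfrak{diff}^{r}(K)$ and locate a copy of $\mathcal{H}$ inside $\Gamma$. When $\Gamma$ has finite index in $SL_{n}(\mathbb{Z})$ with $n \geq 3$ (which is implicit once one asks $\Gamma$ to contain a nonabelian nilpotent subgroup with infinite derived subgroup), the unipotent matrices supported in a fixed $3 \times 3$ block form a copy of $\mathcal{H}$, and $\Gamma$ meets it in a finite index subgroup $\mathcal{H}_{0}$. This $\mathcal{H}_{0}$ is again a finitely generated nilpotent group with infinite derived subgroup: the commutator of two elements of $\mathcal{H}_{0}$ whose classes are independent in $\mathcal{H}/Z(\mathcal{H}) \cong \mathbb{Z}^{2}$ is a nontrivial element of the central copy of $\mathbb{Z}$, so $[\mathcal{H}_{0},\mathcal{H}_{0}]$ is infinite. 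In the general ``almost simple'' case such a subgroup $\mathcal{H}_{0}$ is furnished directly by hypothesis.

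I would then show that $\varphi$ has infinite kernel. The image $\varphi(\mathcal{H}_{0})$ is a finitely generated nilpotent subgroup of $\mathfrak{diff}^{1+Lip}(K)$, hence virtually abelian by Corollary \ref{nilpotent}; pick a finite index abelian subgroup $A$ of it. Then $N := \varphi^{-1}(A) \cap \mathcal{H}_{0}$ has finite index in $\mathcal{H}_{0}$, so $[N,N]$ is infinite by the previous paragraph, while $[N,N] \subset \ker \varphi$ since $\varphi(N) \subset A$ is abelian. (Equivalently: $\ker(\varphi|_{\mathcal{H}_{0}})$ is a nontrivial normal subgroup of the nilpotent group $\mathcal{H}_{0}$, hence meets its infinite centre nontrivially.) Thus $\ker \varphi$ is an infinite normal subgroup of $\Gamma$. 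Since $\Gamma$ is a higher rank lattice, the Margulis normal subgroup theorem forces $\ker \varphi$ to have finite index in $\Gamma$ (in the abstract ``almost simple'' formulation this is exactly the hypothesis on normal subgroups), and therefore $\varphi(\Gamma) \cong \Gamma/\ker \varphi$ is finite.

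I do not expect an analytic obstacle: the work is entirely in Corollary \ref{nilpotent}, which is assumed. The only points needing care are group-theoretic bookkeeping, namely verifying that finite index subgroups of $\mathcal{H}$ still have infinite derived subgroup (so that ``virtually abelian'' is a genuine obstruction rather than vacuously true), and invoking the normal subgroup theorem in the correct rank (whence the restriction $n \geq 3$ for $SL_{n}(\mathbb{Z})$).
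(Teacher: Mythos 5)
Your argument is correct and follows essentially the same route as the paper: the image of a nilpotent subgroup with infinite derived subgroup must be virtually abelian by Corollary \ref{nilpotent}, forcing an infinite kernel, and then almost simplicity (Margulis' normal subgroup theorem in the $SL_{n}(\mathbb{Z})$, $n \geq 3$, case) forces the kernel to have finite index. You merely spell out the details the paper leaves implicit (locating the Heisenberg copy, checking finite-index subgroups still have infinite derived subgroup), which is a welcome but not substantively different elaboration.
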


\begin{proof}
As the group $\Gamma$ contains a nonabelian nilpotent group whose derived subgroup is infinite and by Corollary \ref{nilpotent}, any morphism $\Gamma \rightarrow \mathfrak{diff}^{r}(K)$ has an infinite kernel. But the group $\Gamma$ is almost simple, which means that any normal subgroup of $\Gamma$ is either finite or a finite index subgroup of $\Gamma$. This implies that the kernel of a morphism $\Gamma \rightarrow \mathfrak{diff}^{r}(K)$ is a finite index subgroup of $\Gamma$: the image of this morphism is finite.
\end{proof}

Navas proved in \cite{Nav2} that subgroups of $C^{1+Lip}$-diffeomorphisms of the half-line without free subsemigroups on two generators are abelian. To prove Theorem \ref{nosubsemi}, we try to adapt his techniques. However, this adaptation is not easy :  any diffeomorphism of the half-line preserves the natural order on the half-line whereas, in our case, the diffeomorphisms do not a priori preserve any order on our Cantor set. Moreover, Navas is able to lower the regularity to $C^{1+bv}$ whereas we have to stick to the $C^{1+Lip}$ regularity.

\textbf{Acknowledgement:} The second author wants to thank Isabelle Liousse for a conversation which was the origin of this article.

\section{Equivalence between the two definitions}

Let $r \geq 1$ be an integer or $r=\infty$. In this section, we prove the equivalence between Definitions \ref{extrinsic} and \ref{intrinsic} of $\mathfrak{diff}^{r}(K)$. 

Let $M$ be a differential manifold with $\mathrm{dim}(M) \geq 2$. Let $L$ be a real line which is $C^{r}$-embedded in $M$. We identify the line $L$ with the real line $\mathbb{R}$. Denote by $\mathfrak{diff}^{r}(K)_{1}$ the group of $C^{r}$-diffeomorphisms of $K$ according to Definition \ref{extrinsic}, that is the group of restrictions to $K$ of $C^{r}$-diffeomorphisms of $M$ which preserve $K$. Denote by $\mathfrak{diff}^{r}(K)_{2}$ the group of $C^{r}$-diffeomorphisms of $K$ according to Definition \ref{intrinsic}, that is the group of homeomorphism of $K \subset L=\mathbb{R}$ which locally coincide with $C^{r}$-diffeomorphisms between two intervals of $\mathbb{R}$.

We prove the following statement.

\begin{proposition} \label{equivalence}
The map
$$\begin{array}{rcl}
\mathfrak{diff}^{r}(K)_{1} & \mapsto & \mathfrak{diff}^{r}(K)_{2}
\end{array}$$
which to a homeomorphism $f$ of $K$ in $\mathfrak{diff}^{r}(K)_{1}$, associates $f$, is well-defined, as it takes values in $\mathfrak{diff}^{r}(K)_{2}$, and is onto.
\end{proposition}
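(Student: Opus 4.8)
The plan is to treat well-definedness and surjectivity separately, the common device being a $C^{r}$ tubular neighbourhood of $L$ in $M$.

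\textbf{Well-definedness.} Let $f=\tilde f|_{K}$ with $\tilde f$ a $C^{r}$-diffeomorphism of $M$ preserving $K$, and fix $x\in K$; we must check that $f$ coincides near $x$ with a $C^{r}$-diffeomorphism between two intervals of $\mathbb{R}=L$. The only issue is that $\tilde f$ may carry $L$ off $L$, so I would project back onto $L$. Since $\tilde f(x)\in K\subset L$, choose a $C^{r}$ chart of $M$ near $\tilde f(x)$ in which $L$ becomes the first coordinate axis and, moreover, the line $d\tilde f_{x}(T_{x}L)$ is not contained in the vertical hyperplane $\{0\}\times\mathbb{R}^{n-1}$ (with $n=\dim M$); such a chart exists because one may post-compose any chart straightening $L$ by a linear shear fixing the axis pointwise, which makes $d\tilde f_{x}(T_{x}L)$ non-vertical. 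Let $\pi$ be the projection onto the axis in this chart. Then $\pi$ restricts to the identity on $L$ near $\tilde f(x)$, so $g:=(\pi\circ\tilde f)|_{L}$ is $C^{r}$ near $x$, agrees with $f$ on $K$ near $x$ (because $\tilde f$ sends those points into $K\subset L$, where $\pi$ is the identity), and has nonzero derivative at $x$ by the transversality just arranged; hence, after shrinking, $g$ is a $C^{r}$-diffeomorphism from an open interval $I\ni x$ onto $g(I)$ with $g|_{I\cap K}=f|_{I\cap K}$. This shows $f\in\mathfrak{diff}^{r}(K)_{2}$, so the assignment $f\mapsto f$ indeed takes values in $\mathfrak{diff}^{r}(K)_{2}$.

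\textbf{Surjectivity, step 1: extracting finite combinatorial data.} Let $g\in\mathfrak{diff}^{r}(K)_{2}$. Covering $K$ by finitely many of the clopen sets furnished by Definition \ref{intrinsic} and cutting $\mathbb{R}$ at finitely many points lying in the gaps of $K$, one gets a finite partition $K=\bigsqcup_{i=1}^{m}V_{i}$ into clopen pieces, each of the form $V_{i}=K\cap\bar I_{i}$ with $\bar I_{i}$ a compact interval satisfying $\bar I_{i}\cap K=V_{i}$, the $\bar I_{i}$ pairwise disjoint, and such that $g$ coincides on $V_{i}$ with the restriction of a $C^{r}$-diffeomorphism $\phi_{i}$ defined on a neighbourhood of $\bar I_{i}$. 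Since $g$ is a bijection of $K$, the sets $W_{i}:=g(V_{i})$ form another clopen partition of $K$; shrinking the $\bar I_{i}$ we may also assume the compact intervals $\phi_{i}(\bar I_{i})$ are pairwise disjoint and meet $K$ exactly in $W_{i}$. Thus $g$ is encoded by the two systems of disjoint intervals $\{\bar I_{i}\}$ and $\{\phi_{i}(\bar I_{i})\}$ together with the diffeomorphisms $\phi_{i}$ between them.

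\textbf{Surjectivity, step 2: realizing the data in $M$.} It is enough to build a $C^{r}$-diffeomorphism $\tilde f$ of $M$ with $\tilde f|_{\bar I_{i}}=\phi_{i}$ for all $i$, since then $\tilde f(K)=\bigsqcup_{i}W_{i}=K$ and $\tilde f|_{K}=g$. Fix a $C^{r}$ tubular neighbourhood $\mathcal N\cong L_{0}\times B$ of a compact sub-arc $L_{0}$ of $L$ whose interior contains $K$, all the $\bar I_{i}$ and all the $\phi_{i}(\bar I_{i})$, where $B\subset\mathbb{R}^{n-1}$ is an open ball and $L_{0}$ is identified with $L_{0}\times\{0\}$. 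Inside $\mathcal N$, the inclusion of $\coprod_{i=1}^{m}\bar I_{i}$ and the embedding $\coprod_{i}\phi_{i}$ are $C^{r}$-isotopic through embeddings supported in a fixed compact subset of $\mathcal N$: contract each arc to a tiny arc near a point of $L_{0}$, raise the $m$ tiny arcs to $m$ distinct levels of $B$ so that they can be slid past one another without collision (and reflected in place whenever $\phi_{i}$ reverses orientation), then bring them down over the targets and expand them onto the $\phi_{i}(\bar I_{i})$ with parametrisation $\phi_{i}$. Here the hypothesis $\dim M\geq2$ is used in an essential way, since along the one-dimensional $L$ the order and the orientations of the intervals would be rigid. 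By the isotopy extension theorem, applied in the $C^{r}$ category and with compact support, this isotopy of embeddings is the restriction of an ambient $C^{r}$ isotopy of $\mathcal N$ with compact support; its time-one map extends by the identity to a $C^{r}$-diffeomorphism $\tilde f$ of $M$ with $\tilde f|_{\bar I_{i}}=\phi_{i}$, as required.

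\textbf{Main obstacle.} The heart of the matter is step 2: converting the finite combinatorial description of an element of $\mathfrak{diff}^{r}(K)_{2}$ — a bijection between two finite systems of disjoint intervals, the diffeomorphisms on the pieces, and the choice of which pieces are flipped — into a genuine $C^{r}$-diffeomorphism of $M$. This is precisely where the assumption $\dim M\geq2$ enters (the extra dimension gives room to braid and flip the pieces) and where one must take care to keep everything $C^{r}$, compactly supported and confined to a tubular neighbourhood of $L$, so that it extends by the identity. Well-definedness is comparatively routine once one composes with the tubular projection, the one delicate point being the transversality adjustment of the chart guaranteeing that this projection does not annihilate the tangent line of $L$.
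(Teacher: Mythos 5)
Your proof is correct and follows the same overall skeleton as the paper's: well-definedness is obtained by composing the ambient diffeomorphism with a projection onto $L$ and checking that the composed derivative does not vanish at points of $K$, and surjectivity by encoding the element of $\mathfrak{diff}^{r}(K)_{2}$ through a finite clopen partition into interval pieces with local diffeomorphic extensions, then producing the ambient diffeomorphism via isotopy extension. Two local differences are worth noting. For well-definedness, the paper fixes one projection (from a chart straightening $L$ on a neighbourhood of all of $K$) and shows the derivative is automatically nonzero: since points of $K$ accumulate at $x_{0}$ and their images remain in $K\subset L$, the line $d\tilde f_{x_{0}}(T_{x_{0}}L)$ is forced to be tangent to $L$; you instead tilt the chart by a shear so that this line becomes transverse to the projection direction. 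Both work; your variant does not use that $K$ is perfect, while the paper's pins down the stronger fact that the image direction is horizontal. For surjectivity, the paper quotes Hirsch's theorem that embeddings of disks are ambiently isotopic (with an induction over the finitely many intervals), whereas you construct the braiding/flipping isotopy by hand inside a tubular neighbourhood before invoking isotopy extension; these are the same argument at different levels of explicitness, and both use $\dim M\geq 2$ at exactly this point. One shared gloss: ``shrinking the $\bar I_{i}$'' only makes the image intervals disjoint if the partition was chosen fine enough that the sets $W_{i}=g(V_{i})$ have pairwise disjoint convex hulls (equivalently, are themselves interval pieces of $K$); if the images interleave, no shrinking helps. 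This can be arranged by refining the partition using local monotone extensions of $g^{-1}$, and the paper is just as brief about it, so it is a shared imprecision rather than a gap specific to your argument.
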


This amounts to showing that, if $g$ is a $C^{r}$-diffeomorphism of $M$ which preserves $K$, then the restriction $g_{|K}$ belongs to $\mathfrak{diff}^{r}(K)_{2}$ and that, if we denote by $f$ a homeomorphism of $K$ in $\mathfrak{diff}^{r}(K)_{2}$, then there exists a $C^{r}$-diffeomorphism $g$ of $M$ such that $g_{|K}=f$.

\begin{proof}[Proof of Proposition \ref{equivalence}]
Let $g$ be a $C^{r}$-diffeomorphism of $M$ which preserves $K$. Take a chart $\varphi$ defined on an open subset $U$ of $M$ onto $\mathbb{R}^{\mathrm{dim}(M)}$ such that $K \subset U$ and $\varphi(L \cap U)= \mathbb{R} \times \left\{ 0 \right\}^{\mathrm{dim}(M)-1}$. For instance, you can take as open subset $U$ a tubular neighbourhood of a segment of $L$ which contains $K$. Finally, denote by $\Pi$ the ``projection on $L$'' $\varphi^{-1} \circ p_{1} \circ \varphi$, where $p_{1}: \mathbb{R}^{\mathrm{dim}(M)} \rightarrow \mathbb{R}$ is the projection on the first coordinate.

Let $x_{0} \in K$. We will prove that the differential of $\Pi \circ g_{|L}$ at the point $x_{0}$ does not vanish. Hence, by the inverse function theorem, the map $\Pi \circ g_{|L}$ is a $C^{r}$-diffeomorphism of an open neighbourhood $I$ of the point $x_{0}$ in $L$ onto an open neighbourhood $J$ of the point $g(x_{0})$ in $L$. Moreover, $ g(I \cap K) \subset K \subset U \cap L$ so that $\Pi \circ g_{|I \cap K}=g_{| I \cap K}$ and the map $g_{|K}$ satisfies Definition \ref{intrinsic}.

It remains to show that $d(\Pi \circ g_{|L})(x_{0}) \neq 0$. As the compact subset $K$ is a Cantor set, there exists a sequence $(y_{n})_{n \in \mathbb{N}}$ of elements of $K \setminus \left\{ x_{0} \right\} $ which converges to the point $x_{0}$. Observe that, for any $n$, the point $g(y_{n})$ belongs to $K \subset L \cap U$. Hence the partial derivative 
$$ \frac{\partial}{\partial x_{1}}( \varphi \circ g \circ \varphi^{-1})(\varphi(x_{0}))$$
belongs to $\mathbb{R} \times \left\{ 0 \right\}$ and does not vanish as $g$ is a diffeomorphism. Hence  
$$ \frac{\partial}{\partial x_{1}}( p_{1} \circ \varphi \circ g \circ \varphi^{-1})(\varphi(x_{0})) \neq 0$$
and $d(\Pi \circ g_{|L})(x_{0}) \neq 0$.

Now, let $f$ be a homeomorphism of $K$ which satisfies Definition \ref{intrinsic}. By compactness of $K$, there exists a partition $(K_{i})_{1 \leq i \leq l}$ of $K$ which satisfies the following conditions.
\begin{enumerate}
\item Each subset $K_{i}$ is a clopen subset of $K$.
\item For any index $1 \leq i \leq l$, there exists an open interval $I_{i}$ of $\mathbb{R}$ such that $K_{i}= I_{i} \cap K$.
\item The intervals $I_{i}$ are pairwise disjoint.
\item For any $1 \leq i \leq l$, there exists a diffeomorphism $\tilde{f}_{i}$ defined on $I_{i}$ onto an open interval $J_{i}$ such that $\tilde{f}_{i|K_{i}}=f_{|K_{i}}$.
\item The intervals $J_{i}$ are pairwise disjoint.
\end{enumerate}
To obtain such a partition, take first any cover $(K'_{i})_{1 \leq i \leq l'}$ by clopen subsets such that conditions 1., 2. and 4. are satisfied. Take finite partitions of the clopen sets $K'_{i} \setminus \cup_{j<i} K'_{j}$ and throw away empty sets to obtain a partition such that conditions 1., 2. and 4. are satisfied. We cannot take the sets $K'_{i} \setminus \cup_{j <i} K'_{j}$ directly as those sets might not satisfy the second condition. Finally, shrink the obtained intervals $I_{i}$ in such a way that the two remaining conditions hold.

Take closed intervals $I'_{i} \subset I_{i}$ in such a way that the compact set $K_{i}$ is contained in the interior of the interval $I'_{i}$ and let $J'_{i}= \tilde{f}_{i}(I'_{i})$. Finally, use the isotopy extension property (Theorem 3.1 p. 185 in \cite{Hir}) to extend the map
$$ \begin{array}{rrcl}
\tilde{f}: & \bigcup_{i=1}^l I'_{i} & \rightarrow & \bigcup_{i=1}^l J'_{i} \\
 & x \in I_{i} & \mapsto & \tilde{f}_{i}(x)
\end{array} 
$$
to a diffeomorphism $g$ in $\mathrm{Diff}^{r}(M)$. In \cite{Hir}, the isotopy extension property is stated only for one disk but, with an induction, it is not difficult to prove this property for a union of disjoint closed disks (here closed intervals, which are one-dimensional disks).
\end{proof}

\section{Burnside property}

In this section, we prove Theorem \ref{Burn}. To prove it, we will use Definition \ref{intrinsic} of the group $\mathfrak{diff}^{1+Lip}(K)$. Hence we see our Cantor set $K$ as a subset of the real line $L=\mathbb{R}$.

Before starting the proof of Theorem \ref{Burn}, we need some definitions.

First, elements of $\mathfrak{diff}^{1}(K)$ have a well-defined derivative at each point of $K$. Indeed, fix a diffeomorphism $f$ in $\mathfrak{diff}^{1}(K)$ and a point $x_{0}$ of $K$. Then there exists an open interval $I$ of $\mathbb{R}$ which contains the point $x_{0}$ and a $C^{1}$-diffeomorphism $\tilde{f}: I \rightarrow \tilde{f}(I)$ such that $f_{I \cap K}=\tilde{f}_{I \cap K}$. Then the derivative $\tilde{f}'(x_{0})$ does not depend on the chosen extension $\tilde{f}$. Indeed,
$$\tilde{f}'(x_{0})= \lim_{\begin{array}{l} x \rightarrow x_{0} \\ x \in K \end{array}} \frac{f(x)-f(x_{0})}{x-x_{0}}$$
and the right-hand side of this equality depends only on $f$. We call this number the derivative of $f$ at $x_{0}$ and we denote it by $f'(x_{0})$.

As we can define the notion of derivative for the elements of our group, we also have a notion of hyperbolic fixed point which is recalled in the following definition.

\begin{definition}
Let $f$ be a diffeomorphism in $\mathfrak{diff}^{1}(K)$. Let $x_{0}$ be a point of $K$. We say that the point $x_{0}$ is a hyperbolic fixed point for $f$ if the following hold.
\begin{enumerate}
\item $f(x_{0})=x_{0}$.
\item $|f'(x_{0})| \neq 1$.
\end{enumerate}
\end{definition}

Observe that any diffeomorphism $f$ in $\mathfrak{diff}^{1}(K)$ with a hyperbolic fixed point $x_{0}$ is an infinite order element as the sequence $((f^{n})'(x_{0}))_{n \geq 0}$, which is a geometric sequence, is infinite.

The proof of Theorem \ref{Burn} relies on the following proposition, which is a consequence of a theorem by Sacksteder.

\begin{proposition} \label{SackCantor}
Let $G$ be a finitely generated subgroup of $\mathfrak{diff}^{1+Lip}(K)$. Then one of the following properties holds.
\begin{enumerate}
\item The group $G$ contains an element with a hyperbolic fixed point. 
\item Any minimal invariant subset for the action of $G$ on $K$ is finite.
\end{enumerate}
\end{proposition}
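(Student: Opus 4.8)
The plan is to reduce the statement to the classical theorem of Sacksteder, which asserts that for a finitely generated pseudogroup of $C^{1+\mathrm{Lip}}$ diffeomorphisms acting on a one-dimensional manifold, any minimal set which is exceptional (i.e.\ a Cantor set) carries a contracting (hyperbolic) holonomy. The obstacle is that $G$ acts only on the Cantor set $K$, and its elements are only \emph{locally} restrictions of genuine diffeomorphisms of intervals of $\mathbb{R}$; so there is no global action on a line and Sacksteder's theorem cannot be invoked verbatim. The key idea is to package the local extensions of the generators of $G$ into a pseudogroup of $C^{1+\mathrm{Lip}}$ local diffeomorphisms of $\mathbb{R}$ which has $K$ (or the relevant minimal set) as an invariant set, and then apply Sacksteder to that pseudogroup.

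Here are the steps in order. First I would fix a finite symmetric generating set $g_{1},\dots,g_{k}$ of $G$. Using compactness of $K$ and Definition \ref{intrinsic}, cover $K$ by finitely many clopen intervals on each of which every $g_{i}$ and $g_{i}^{-1}$ coincides with an honest $C^{1+\mathrm{Lip}}$ diffeomorphism between two open intervals of $\mathbb{R}$; this produces a finite set of $C^{1+\mathrm{Lip}}$ local diffeomorphisms $\tilde g_{i,j}\colon I_{i,j}\to J_{i,j}$ of $\mathbb{R}$, and these generate a pseudogroup $\Gamma$ on $\mathbb{R}$ (taking all admissible compositions) whose action on $K$ restricts to the $G$-action. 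Second, let $M$ be an invariant minimal subset of the $G$-action on $K$; assume $M$ is infinite and derive a contradiction with alternative 1 failing. Since $M$ is closed and $G$-minimal and infinite, and totally disconnected, $M$ is a Cantor set, and it is an exceptional minimal set for the pseudogroup $\Gamma$ on $\mathbb{R}$ (its points are not isolated, and $\Gamma$-orbits of points of $M$ are dense in $M$). Third, I would apply Sacksteder's theorem to $\Gamma$ and $M$: there is a point $x_{0}\in M$ and an element $\gamma\in\Gamma$, defined near $x_{0}$, with $\gamma(x_{0})=x_{0}$ and $|\gamma'(x_{0})|<1$. Fourth, I would trace $\gamma$ back to $G$: $\gamma$ is a composition $\tilde g_{i_{m},j_{m}}\circ\cdots\circ\tilde g_{i_{1},j_{1}}$ of local pieces, and by construction each piece agrees on $K$ with the corresponding generator $g_{i_{\ell}}^{\pm 1}$; hence the element $h=g_{i_{m}}^{\pm1}\cdots g_{i_{1}}^{\pm1}\in G$ satisfies $h_{|U\cap K}=\gamma_{|U\cap K}$ on a neighbourhood $U$ of $x_{0}$, so $h(x_{0})=x_{0}$ and $h'(x_{0})=\gamma'(x_{0})$, which has absolute value different from $1$. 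Thus $x_{0}$ is a hyperbolic fixed point of $h\in G$, giving alternative 1.

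I expect the main obstacle to be the second and third steps: verifying cleanly that $M$ (as opposed to $K$ itself) is a genuine exceptional minimal set for the pseudogroup $\Gamma$ on the line, and invoking the correct version of Sacksteder's theorem for pseudogroups (rather than for foliations or for a single group acting on a circle/interval). One must be careful that the pseudogroup $\Gamma$ is compactly generated in the relevant sense and that $M$ has empty interior in $\mathbb{R}$ and coincides with the closure of any of its $\Gamma$-orbits; the $G$-minimality of $M$ on $K$ gives exactly this. A secondary technical point is to ensure the generators of $\Gamma$ can be chosen so that their domains form a nice cover of $M$ and compositions used by Sacksteder stay within the pseudogroup; this is handled by taking, as in the proof of Proposition \ref{equivalence}, a finite clopen partition of $K$ subordinate to the local extensions of all the $g_{i}^{\pm1}$ and shrinking the ambient intervals. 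Once the pseudogroup is set up, the rest is bookkeeping.
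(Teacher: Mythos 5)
Your proposal is correct and follows essentially the same route as the paper: build a finitely generated pseudogroup of $C^{1+\mathrm{Lip}}$ local diffeomorphisms of $\mathbb{R}$ out of local extensions of a finite symmetric generating set (this is the paper's Proposition \ref{pseudo}), observe that an infinite minimal set is a Cantor set and is minimal for the pseudogroup, apply Sacksteder's theorem for pseudogroups, and transfer the hyperbolic fixed point back to an element of $G$ via the local agreement on $K$. The only difference is presentational: the paper isolates the two-way correspondence between germs of pseudogroup elements and group elements as a separate proposition proved by induction on word length, whereas you treat that bookkeeping informally, but the mathematical content is the same.
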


\subsection{Proof of Proposition \ref{SackCantor}}

Before proving the proposition, we have to recall the definition of a pseudogroup of diffeomorphisms of the real line $\mathbb{R}$.

\begin{definition} \label{pseudogroups}
A set $\mathcal{G}$ of $C^{r}$-diffeomorphisms  $g: \mathrm{dom}(g) \rightarrow \mathrm{ran}(g)$ between two open subsets $\mathrm{dom}(g)$ and $\mathrm{ran}(g)$ of $\mathbb{R}$ is called a pseudogroup of $C^{r}$-diffeomorphisms of $\mathbb{R}$ if
\begin{enumerate}
\item The set $\mathcal{G}$ is stable under composition, that is, if two elements $g$ and $h$ belong to $\\mathcal{G}$ with $\mathrm{ran}(h) \subset \mathrm{dom}(g)$, then the composition $gh$ of those elements belongs to $\mathcal{G}$.
\item The set $\mathcal{G}$ is stable under inverses, that is, for any element $g$ in $\mathcal{G}$, its inverse $g^{-1}$ belongs to $\mathcal{G}$.
\item The identity of $\mathbb{R}$ belongs to $\mathcal{G}$.
\item The set $\mathcal{G}$ is stable under restrictions, that is, if $g$ is an element of $\mathcal{G}$ and $A$ is an open subset of $\mathrm{dom}(g)$ then the diffeomorphism $g_{|A}:A \rightarrow g(A)$ belongs to $\mathcal{G}$.
\end{enumerate}
\end{definition}

Let $\mathcal{G}$ be a pseudogroup of diffeomorphisms of $\mathbb{R}$. A subset $A$ of $\mathbb{R}$ is \emph{invariant} under $\mathcal{G}$ if, for any point $x$ of $A$ and any element $g$ of $\mathcal{G}$, we have
$$ x \in \mathrm{ran}(g) \Rightarrow g(x) \in A.$$
A compact subset $A$ of $\mathbb{R}$ is a \emph{minimal invariant set} for the action of $\mathcal{G}$ if the set $A$ is nonempty and invariant under $\mathcal{G}$ and minimal for the inclusion relation among invariant nonempty compact subsets of $\mathbb{R}$.
A subset $\mathcal{S}$ of $\mathcal{G}$ is called a \emph{generating set} of $\mathcal{G}$ if any element $g$ of $\mathcal{G}$ is the restriction of a product of elements of $\mathcal{S}$. We say that a pseudogroup $\mathcal{G}$ is \emph{finitely-generated} if it admits a finite generating set.

Proposition \ref{SackCantor} is a consequence of the following theorem by Sacksteder (see \cite[Theorem 3.2.2]{Nav} for a proof).

\begin{theorem}[Sacksteder] \label{Sack}
Let $\mathcal{G}$ be a pseudogroup of $C^{1+Lip}$-diffeomorphisms of $\mathbb{R}$. Suppose that the following conditions hold.
\begin{enumerate}
\item The pseudogroup $\mathcal{G}$ has a Cantor set $K'$ as a minimal set of its action on the line. 
\item There exists a finite set $\mathcal{S}$ of elements of $\mathcal{G}$ which generates $\mathcal{G}$ whose domains have a compact intersection with $K'$.
\end{enumerate}
Then the pseudogroup $\mathcal{G}$ contains an element with a hyperbolic fixed point.
\end{theorem}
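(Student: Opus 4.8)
The plan is to reduce everything to producing a single genuine contraction and then to exploit the distortion control that $C^{1+Lip}$ regularity provides. Concretely, I would first show that it suffices to find an element $h \in \mathcal{G}$ together with a compact interval $I$ meeting $K'$ such that $h(I)$ is contained in the interior of $I$ and $\sup_{I} |h'| < 1$. For such an $h$ the contraction mapping principle gives a unique fixed point $x_*$ in the interior of $I$, which is attracting and therefore hyperbolic; and since $I \cap K' \neq \emptyset$, iterating $h$ on a point of $I \cap K'$ and using that $K'$ is closed and $\mathcal{G}$-invariant forces $x_* \in K'$. Thus the whole problem becomes: \emph{manufacture a contracting self-map inside the pseudogroup.}

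Second, I would set up the distortion estimate, the only place the $C^{1+Lip}$ hypothesis is used. Writing $C$ for a common Lipschitz constant of the functions $\log g'$, $g \in \mathcal{S}$, and using the chain rule, for any word $g = g_{i_n}\cdots g_{i_1}$ in the generators and any interval $I_0$ whose successive images $I_0, g_{i_1}(I_0), \dots$ are pairwise disjoint, one gets $|\log g'(x) - \log g'(y)| \le C \sum_j |I_j| \le C\, d$ for all $x,y \in I_0$, where $d$ is the length of the convex hull of $K'$. Hence along a \emph{disjoint} chain of images every composition is uniformly almost affine: its derivative varies by at most the factor $e^{Cd}$.

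Third comes the source of contraction, extracted from minimality. Since $K'$ is a Cantor set, its complement in the convex hull is a union of gaps, and I would first observe that the $\mathcal{G}$-orbit of any single gap is infinite: otherwise the (finite) set of endpoints of the gaps in that orbit would be a finite nonempty $\mathcal{G}$-invariant subset of $K'$, contradicting minimality. Infinitely many pairwise disjoint gaps inside a bounded interval must have lengths tending to $0$, so there are elements $g_n \in \mathcal{G}$ sending a fixed gap $G_0$ to gaps of length tending to $0$; by the mean value theorem $\inf_{G_0}|g_n'| \to 0$, and combined with the distortion bound this yields arbitrarily strong uniform contraction of a definite interval. To convert a strong contraction into a \emph{self}-map, I would use minimality again, via the finite subcover of $K'$ by images of a base interval, to choose a return element $b$ from a fixed finite family carrying the small image back near the source; then $h = b\,g_n$ maps the starting interval strictly inside itself, and since $\|b'\|$ is uniformly bounded over the finite family while the contraction from $g_n$ can be made as strong as we like, $\sup |h'| < 1$, completing the reduction of the first paragraph.

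The step I expect to be the main obstacle is the bookkeeping that keeps the chain of intermediate images \emph{disjoint}, since this is exactly what makes the bound $\sum_j |I_j| \le d$ available and is the precise reason the argument needs $C^{1+Lip}$ rather than merely $C^1$. An arbitrary word realizing a small image of $G_0$ need not have disjoint intermediate images, so I would have to select the word carefully --- for instance by a minimal-length or first-return choice among all words producing an image of size below a given threshold --- to guarantee disjointness (or at least uniformly bounded multiplicity) of the chain, while simultaneously ensuring that the return element $b$ does not destroy the contraction. Getting these two requirements to hold at once is the technical heart of Sacksteder's theorem.
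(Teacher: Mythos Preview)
The paper does not actually prove this statement: Theorem~\ref{Sack} is quoted as Sacksteder's theorem and the reader is referred to \cite{Nav}, Theorem~3.2.2, for a proof. So there is no ``paper's own proof'' to compare against; the theorem is used as a black box in the proof of Proposition~\ref{SackCantor}.

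That said, your outline is a faithful sketch of the standard proof of Sacksteder's theorem as presented in Navas's book. The reduction to producing a contracting self-map, the distortion control via the Lipschitz bound on $\log g'$ summed over a disjoint chain, the use of minimality to get arbitrarily small images of a fixed gap, and the return trick to close up the contraction are exactly the ingredients of that argument. You also correctly identify the genuine technical point: arranging the word so that the intermediate images are pairwise disjoint (this is done in Navas by a first-entry/minimal-length argument), which is what makes the telescoping bound $\sum_j |I_j| \le d$ legitimate. Your sketch is not a complete proof --- the disjointness bookkeeping and the interaction with the return element $b$ need to be written out carefully --- but as a plan it is correct and matches the cited reference.
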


To apply this theorem, we need to connect the action of our group $G$ to a pseudogroup. This is achived by the following proposition, which is roughly a consequence of the equivalence between the two definitions of the group $\mathfrak{diff}^{r}(K)$.

\begin{proposition} \label{pseudo}
Let $G$ be a finitely generated subgroup of  $\mathfrak{diff}^{1+Lip}(K)$. Then there exists a pseudogroup $\mathcal{G}$ of $C^{1+Lip}$-diffemorphisms of the line $L$ such that
\begin{enumerate}
\item For any diffeomorphism $h$ of the pseudogroup $\mathcal{G}$ and for any point $x$ of $\mathrm{dom}(h) \cap K$, there exists an open neighbourhood $O$ of $x$ and an element $g$ of the group $G$ such that $h_{|O \cap K}=g_{|O \cap K}$.
\item For any element $g$ in $G$ and any point $x$ in $K$, there exists an open neighbourhood $O$ of $x$ and an element $h$ of $\mathcal{G}$ such that $O \subset \mathrm{dom}(h)$ and $g_{|O \cap K}=h_{|O \cap K}$.
\item There exists a finite generating set $\mathcal{S}$ of $\mathcal{G}$ such that, for any element $h$ in $\mathcal{S}$, the set $\mathrm{dom}(h) \cap K$ is compact.
\end{enumerate}
\end{proposition}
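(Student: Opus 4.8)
The plan is to realize $\mathcal{G}$ as the pseudogroup generated by a carefully chosen finite family of local $C^{1+Lip}$-extensions of a generating set of $G$, together with the inverses of these extensions. Fix once and for all a finite symmetric generating set $\mathcal{F}$ of $G$. For $g\in\mathcal{F}$ and $x\in K$, Definition~\ref{intrinsic} already hands us an open interval and a $C^{1+Lip}$-diffeomorphism extending $g$ near $x$; the only real issue is to choose these local charts so that they are compatible with $K$ \emph{in both directions}. So the first step is to prove the following lemma: for every $g\in\mathfrak{diff}^{1+Lip}(K)$ and every $x\in K$ there are an open interval $I\ni x$ and a $C^{1+Lip}$-diffeomorphism $h\colon I\to J$ with $h_{|I\cap K}=g_{|I\cap K}$, with $I\cap K$ and $J\cap K$ compact, and, crucially, with $h(I\cap K)=J\cap K$ (so that $h^{-1}$, restricted to $J\cap K$, coincides with $g^{-1}$).

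The trick I would use for the lemma is not to extend $g$ directly, but to invert an extension of $g^{-1}$. Choose an open interval $B\ni g(x)$ with $\partial B\cap K=\emptyset$ and a $C^{1+Lip}$-diffeomorphism $\beta\colon B\to\beta(B)$ with $\beta_{|B\cap K}=g^{-1}_{|B\cap K}$. The set $g^{-1}(B\cap K)$ is a clopen neighbourhood of $x$ in $K$, and since $K$ is nowhere dense it contains a clopen piece $P=K\cap[\alpha_1,\alpha_2]$ with $\alpha_1\le x\le\alpha_2$; clopenness of $P$ forces $\alpha_1$ (resp.\ $\alpha_2$) not to be a limit point of $K$ from the left (resp.\ right), so for $\delta>0$ small the interval $I:=(\alpha_1-\delta,\alpha_2+\delta)$ satisfies $I\cap K=P$ and $\overline{I}\subseteq\beta(B)$. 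On $P$ one has $g=(\beta^{-1})_{|P}$: for $z\in P\subseteq g^{-1}(B\cap K)$ we have $g(z)\in B\cap K$, hence $\beta(g(z))=g^{-1}(g(z))=z$. Put $h:=(\beta^{-1})_{|I}$ and $J:=\beta^{-1}(I)$; then $h_{|I\cap K}=g_{|I\cap K}$, and $h(I\cap K)=\beta^{-1}(P)=g(P)$ is clopen in $K$, hence compact. For the key equality $h(I\cap K)=J\cap K$: if $w\in J\cap K$ then $w\in\beta^{-1}(I)\subseteq B$, so $w\in B\cap K$ and $\beta(w)=g^{-1}(w)\in K$; since also $\beta(w)\in I$ we get $\beta(w)\in I\cap K=P$, whence $w=\beta^{-1}(\beta(w))\in\beta^{-1}(P)=h(I\cap K)$, and the reverse inclusion is clear. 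Finally, shrinking $\delta$ further arranges $\partial J\cap K=\emptyset$ (the endpoints of $J$ tend, as $\delta\to0$, to the endpoints $g(\alpha_1),g(\alpha_2)$ of $g(P)$, approaching them from outside $[\min g(P),\max g(P)]$, and these two numbers are isolated in $K$ on the relevant sides because $g(P)$ is clopen), so $J\cap K$ is compact as well.

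Granting the lemma, for each $g\in\mathcal{F}$ I would cover the compact set $K$ by finitely many of the domains $I$ produced by the lemma, let $\mathcal{S}$ be the finite family of all the corresponding diffeomorphisms $h$ together with their inverses $h^{-1}$, and let $\mathcal{G}$ be the pseudogroup generated by $\mathcal{S}$, i.e.\ all restrictions to open sets of finite compositions of elements of $\mathcal{S}$, plus restrictions of the identity. Property~(3) is then immediate, since $\mathcal{S}$ is finite and symmetric, hence generates $\mathcal{G}$, and each element of $\mathcal{S}$ has domain meeting $K$ in one of the compact sets $I\cap K$ or $J\cap K$. For~(1), write an element of $\mathcal{G}$ as $(s_\ell\circ\cdots\circ s_1)_{|O}$ with $s_i\in\mathcal{S}$, let $x\in O\cap K$, and track the orbit $x_0=x$, $x_i=s_i(x_{i-1})$: if $s_i$ is one of the extensions $h$ (of some $g\in\mathcal{F}$), then $s_i=g$ on $\mathrm{dom}(s_i)\cap K$, so $x_{i-1}\in K$ gives $x_i=g(x_{i-1})\in K$; if $s_i=h^{-1}$, then by $h(I\cap K)=J\cap K$ the point $x_{i-1}\in J\cap K$ has its $h$-preimage in $I\cap K$, which must be $g^{-1}(x_{i-1})$, so again $x_i\in K$ and $s_i=g^{-1}\in\mathcal{F}$ on $\mathrm{dom}(s_i)\cap K$. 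By induction all the $x_i$ lie in $K$, and composing the local identifications shows that our element of $\mathcal{G}$ coincides on the $K$-neighbourhood $O\cap K$ of $x$ with a product of elements of $\mathcal{F}$, i.e.\ with an element of $G$. For~(2), given $g\in G$ write $g=g_{i_\ell}\circ\cdots\circ g_{i_1}$ with $g_{i_r}\in\mathcal{F}$ and, given $x\in K$, set $x_0=x$, $x_r=g_{i_r}(x_{r-1})$; at step $r$ pick from the finite cover an extension $s_r$ of $g_{i_r}$ whose domain contains $x_{r-1}$, and note $s_\ell\circ\cdots\circ s_1$, restricted to the open set where the composition is defined, lies in $\mathcal{G}$, contains $x$ in its domain, and coincides with $g$ on a $K$-neighbourhood of $x$.

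The one genuine obstacle is the lemma, namely arranging two-sided compatibility with $K$: a careless local extension of $g$ can send gap points of $K$ into $K$, so that its inverse does not preserve $K$, which would break the orbit-tracking for~(1) as soon as the chosen composition uses an inverse piece. Realizing the chart of $g$ near $x$ as the inverse of a chart of $g^{-1}$ near $g(x)$, with domain confined to a clopen piece sitting inside $g^{-1}(B\cap K)$, is exactly what makes the equality $h(I\cap K)=J\cap K$ automatic; the rest is routine compactness and bookkeeping.
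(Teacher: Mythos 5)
Your proof is correct, but it secures the crucial two-sided compatibility with $K$ by a different device than the paper. The paper fixes a finite symmetric generating set $S$ of $G$ and, for each $s\in S$, a single extension $\hat{s}$ defined on an open neighbourhood $O_{s}$ of the whole of $K$ (a step that silently relies on the gluing of local extensions carried out in the proof of Proposition \ref{equivalence}, since Definition \ref{intrinsic} only provides local charts); the pseudogroup generators are then the restrictions of $\hat{s}$ to the connected components of $O_{s}$, and the fact that inverse charts send points of $K$ back into $K$ is automatic because $\hat{s}$ is injective on a neighbourhood of $K$ and $\hat{s}(K)=K$. You never glue: your key lemma manufactures purely local charts satisfying $h(I\cap K)=J\cap K$ by inverting an extension of $g^{-1}$ near $g(x)$ and confining the domain to a clopen piece of $K$, after which a finite subcover plus the same word-length/orbit-tracking induction gives properties (1)--(3). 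What each route buys: yours avoids the partition-with-disjoint-images construction entirely (arguably filling in a point the paper leaves implicit), at the cost of an extra local lemma; the paper's global extension makes the behaviour of inverses on $K$ immediate with no lemma at all. Two small points in your write-up: to guarantee $\overline{I}\subseteq\beta(B)$ you should choose $\alpha_{1},\alpha_{2}$ close enough to $x$ (a priori only $P\subseteq\beta(B)$ and $x\in\beta(B)$ are known), and the final shrinking of $\delta$ to force $\partial J\cap K=\emptyset$ is superfluous -- and its justification invokes $g(\alpha_{i})$ although $\alpha_{i}$ need not lie in $K$ -- since your key identity already gives $J\cap K=h(I\cap K)=g(P)$, which is clopen in $K$ and hence compact.
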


In particular, the pseudogroup $\mathcal{G}$ preserves $K$ and, for any point $x$ of $K$, the orbit of $x$ under the action of $G$ is also the orbit of $x$ under the action of $\mathcal{G}$. The last property will enable us to apply Sacksteder's theorem. Before proving the above proposition, we use it to prove Proposition \ref{SackCantor}.

\begin{proof}[Proof of Proposition \ref{SackCantor}]
Denote by $\mathcal{G}$ the pseudogroup associated to $G$ as in Proposition \ref{pseudo}. Let $K_{min} \subset K$ be a minimal invariant set for the action of $G$ on the Cantor set $K$. Then the set $K_{min}$ is a minimal invariant set for the action of the pseudogroup $\mathcal{G}$. Suppose that the set $K_{min}$ is infinite. Then the set $K'_{min}$ of accumulation points of $K_{min}$ is closed and nonempty. As the set $K'_{min}$ is also $G$-invariant, by minimality of $K_{min}$, we have $K'_{min}=K_{min}$ and the set $K_{min}$ is a Cantor set. By Theorem \ref{Sack}, there exist $x \in K_{min} \subset K$ and an element $h \in \mathcal{G}$ such that $x \in \mathrm{dom}(h)$ and $x$ is a hyperbolic fixed point for $h$. Hence, by Proposition \ref{pseudo}, some element of $G$ has a hyperbolic fixed point.
\end{proof}

\begin{proof}[Proof of Proposition \ref{pseudo}]

Let $g$ be a diffeomorphism in $\mathfrak{diff}^{1+ Lip}(K)$. By Definition \ref{intrinsic}, there exists a diffeomorphism $\hat{g}$ from an open neighbourhood $O_{g}$ of $K$ in $L$ to another neighbourhood of $K$ in $L$. For each connected component $O_{i,g}$ of $O_{g}$ (which is an open interval with compact intersection with $K$), let $\gamma_{i,g}= \hat{g}_{|O_{i,g}}$.
If $S$ is a symmetric finite generating set of $G$, let $\mathcal{S}$ be the finite set consisting of diffeomorphisms of the form $\gamma_{i,s}$, where $s \in S$. 

We claim that, if $\mathcal{G}$ denotes the pseudogroup generated by $\mathcal{S}$, then $\mathcal{G}$ satisfies the desired properties.

Let us prove it by induction on word length. Fix $x \in K$. For any element $g$ in the group $G$, let us denote by $l_{S}(g)$ the minimal number of factors required to write $g$ as a product of elements of $S$. Let us prove the following statement by induction on $n$: for any element $g \in G$ with $l_{S}(g) = n$, there exists an element $h$ of $\mathcal{G}$ such that $g=h_{|K}$ on a neighbourhood of $x$ in $K$.

For $n=1$, this property holds by construction of $\mathcal{S}$. Suppose this property is true for some $n$ and let us prove it for $n+1$. Let $g \in G$ and suppose $l_{S}(g)=n+1$. Write $g=sg_{1}$, where $l_{S}(g_{1})=n$ and $s$ belongs to $S$. By induction hypothesis, there exists $h \in \mathcal{G}$ such that $g_{1}=h$ on a neighbourhood of $x$. Let $O_{i,s}$ be the connected component of $O_{s}$ which contains $g_{1}(x)$. Then $g=\gamma_{i,s}h_{|K}$ on a neighbourhood of $x$ in $K$. This completes the induction.

For any element $h$ in $\mathcal{G}$ and any point $x$ in $\mathrm{dom}(h)$, we denote by $l_{\mathcal{S},x}(h)$ the minimal number of factors required to write $h$ as a product of elements of $\mathcal{S}$ on a neighbourhood of $x$. To finish the proof of Proposition \ref{pseudo}, it suffices to prove the following statement by induction on $n$. For any element $h$ in $\mathcal{G}$ and for any point $x$ in $\mathrm{dom}(h) \cap K$, if $l_{\mathcal{S},x}(g) \leq n$, then there exists an element $g$ of $G$ such that $g=h_{|K}$ on a neighbourhood of $x$ in $K$. This induction is straightforward to carry out and is left to the reader.
\end{proof}

\subsection{End of the proof of Theorem \ref{Burn}}

Let $G$ be a finitely generated periodic subgroup of $\mathfrak{diff}^{1+Lip}(K)$. Let us prove that $G$ is finite. We will use the following lemma.

\begin{lemma} \label{invneigh}
For any point $x$ in $K$, there exists a $G$-invariant clopen neighbourhood $U$ of $x$ such that the action of $G$ on $U$ factors through a finite group action.
\end{lemma}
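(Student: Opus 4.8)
The plan is to combine Proposition~\ref{SackCantor} with an argument of Thurston--stability type. Since $G$ is periodic, no element of $G$ has a hyperbolic fixed point, so by Proposition~\ref{SackCantor} every minimal invariant set of the $G$-action on $K$ is finite. Call a clopen subset $U$ of $K$ \emph{good} if it is $G$-invariant and the restriction homomorphism $G \to \mathrm{Homeo}(U)$ has finite image, equivalently if the pointwise stabilizer $\{g\in G: g_{|U}=\mathrm{id}\}$ has finite index in $G$. The set $\mathcal{O}$ of points of $K$ admitting a good neighbourhood is open and $G$-invariant, and I must prove $\mathcal{O}=K$. If not, $K\setminus\mathcal{O}$ is a nonempty closed $G$-invariant subset of $K$; by compactness it contains a minimal invariant set, which by the above is a single finite orbit. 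So it suffices to prove that every point $z$ lying in a finite $G$-orbit admits a good neighbourhood.

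Fix such a $z$ and let $H=\mathrm{Stab}_G(z)$; its index in $G$ equals the size of the orbit, so $H$ has finite index, hence is finitely generated. By the chain rule $h\mapsto h'(z)$ is a homomorphism $H\to\mathbb{R}^{*}$, and since $H$ is periodic its image consists of roots of unity, hence lies in $\{\pm1\}$; thus the kernel $H^{+}$ has index at most $2$ in $H$ and is again finitely generated and periodic. The heart of the proof is the claim: \emph{there is a clopen neighbourhood $W$ of $z$ on which $H^{+}$ acts trivially.} Suppose not. Pick a finite generating set $h_1,\dots,h_r$ of $H^{+}$ and extensions $\tilde h_i$ to $C^{1}$-diffeomorphisms of a common interval $J\ni z$ with $\tilde h_i(z)=z$, $\tilde h_i'(z)=1$. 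Failure of the claim produces points $x_n\in K$ with $x_n\to z$ and $\max_i|h_i(x_n)-x_n|>0$. Running Thurston's stability argument along this sequence --- setting $a_n^{(i)}=h_i(x_n)-x_n$, normalising by $\|a_n\|_\infty$, passing to a subsequence, and controlling compositions via $\bigl|\int_{x_n}^{y_n}(\tilde h_i'-1)\bigr|=o(|y_n-x_n|)$ (valid since $\tilde h_i'$ is continuous with value $1$ at $z$) --- yields a nontrivial homomorphism $H^{+}\to(\mathbb{R},+)$, contradicting periodicity of $H^{+}$. (Alternatively one quotes the Thurston stability theorem, see \cite{Nav}, after checking the Cantor set plays no role.) This proves the claim.

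Granting the claim, I finish as follows. Since $h_i{}_{|W}=\mathrm{id}$ for each generator and $H^{+}=\langle h_1,\dots,h_r\rangle$, the set $W$ is $H^{+}$-invariant and $H^{+}$ fixes it pointwise. Put $U=\bigcup_{g\in G}gW$. As $W$ is $H^{+}$-invariant, $gW$ depends only on the coset $gH^{+}\in G/H^{+}$, and $[G:H^{+}]=[G:H]\,[H:H^{+}]<\infty$; hence $U$ is a finite union of clopen sets, i.e. a clopen $G$-invariant neighbourhood of $z$. Let $P=\{g\in G: g_{|W}=\mathrm{id}\}$, so $P\supseteq H^{+}$ and $P$ has finite index in $G$. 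Choosing coset representatives $g_1,\dots,g_s$ of $G/H^{+}$, a point $g\in G$ fixes $U$ pointwise iff it fixes each $g_tW$ pointwise iff $g_t^{-1}gg_t\in P$ for all $t$, i.e. $g\in\bigcap_t g_tPg_t^{-1}$; this is a finite intersection of subgroups each of the same finite index as $P$, hence of finite index in $G$. So the pointwise stabilizer of $U$ has finite index, $U$ is good, and $z\in\mathcal{O}$, which is the desired contradiction and completes the proof.

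The main obstacle is the claim, i.e. the Thurston--stability step: $H^{+}$ does not act on a manifold but on a Cantor set, so one cannot literally invoke the classical statement and must instead rerun its proof (which uses only the first-order behaviour at $z$ together with the $C^{1}$, indeed $C^{1+\mathrm{Lip}}$, extensions, so the Cantor set is harmless) or phrase a pseudo-group version. Everything else is routine bookkeeping with clopen sets and finite-index subgroups.
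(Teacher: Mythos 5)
Your proof is correct, and its global skeleton is the same as the paper's: use Proposition \ref{SackCantor} plus periodicity (no hyperbolic fixed points) to reduce to a finite minimal set, prove a local rigidity statement at a fixed point where the derivative is $+1$, then saturate by $G$ and conclude by a finite-index argument. The differences are in the two key steps. For the reduction, the paper takes a minimal set inside $\overline{G.x}$ and proves Lemma \ref{min} directly (so that $x\in U$ by invariance), whereas you argue by contradiction with the open invariant set $\mathcal{O}$ of points admitting good neighbourhoods and a minimal set in its complement; these are equivalent. More substantially, for the local claim the paper does not rerun Thurston stability: since the generators fix the finite minimal set with derivative $+1$, their local extensions are increasing, and an increasing map fixing $z$ which moves some $y\in K$ near $z$ produces an infinite monotone orbit inside $K$, contradicting finite order of that single element --- a two-line argument. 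Your route, rerunning the Thurston stability argument (normalized displacements along $x_n\to z$, additivity up to $o(c_n)$ via the estimate $\bigl|\int_{x_n}^{y_n}(\tilde h_i'-1)\bigr|\leq \sup|\tilde h_i'-1|\,|y_n-x_n|$, a nontrivial homomorphism $H^{+}\to(\mathbb{R},+)$ killed by periodicity), is also valid: you correctly note that one cannot literally quote the classical statement (the extensions do not form a group), but the estimates only involve points of $K$ near $z$, where group elements agree with their $C^1$ extensions, so the adaptation goes through; this is in fact the tool the introduction advertises, it only needs $C^1$ regularity, and it proves something slightly stronger (absence of nontrivial homomorphisms to $\mathbb{R}$ suffices), at the cost of being much heavier than the paper's monotonicity trick. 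Finally, a small organizational difference: the paper's subgroup $G_{1}$ (elements fixing all of $K_{min}$ with derivative $1$ there) is normal in $G$, so triviality of its action on $U=\bigcup_{g}g(U')$ is immediate, whereas your $H^{+}$ is a point stabilizer, not normal, and you compensate with the (correct) intersection-of-conjugates/Poincar\'e argument.
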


Before proving this lemma, we use it to prove Theorem \ref{Burn}.

\begin{proof}[Proof of Theorem \ref{Burn}]
By Lemma \ref{invneigh}, there exists a cover $(U_{1},U_{2}, \ldots, U_{r})$ of $K$ by $G$-invariant clopen sets on which the action of $G$ factors through a finite group action. Changing $U_{i}$ to 
$$U_{i}- \bigcup_{1 \leq j \leq i-1} U_{j}$$ if necessary, we can suppose that the sets $U_{j}$ are pairwise disjoint. For any $i$, we denote by $G_{i}$ the group of restrictions to $U_{i}$ of elements of $G$. The groups $G_{i}$ are finite and the restriction maps define a morphism 
$$G \rightarrow \prod \limits _{i=1}^{r} G_{i}.$$
This morphism is one-to-one as the open sets $U_{i}$ cover $K$. Hence $G$ is finite.
\end{proof}

Now, we prove Lemma \ref{invneigh}. This lemma will be a consequence of the following lemma which will be proved afterwards.

\begin{lemma} \label{min}
For any minimal invariant subset $K_{min} \subset K$ for the action of $G$ on $K$, the set $K_{min}$ is finite and there exists a $G$-invariant clopen neighbourhood $U$ of $K_{min}$ on which the action factors through a finite group action.
\end{lemma}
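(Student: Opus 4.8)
First I would establish that $K_{min}$ is finite. Since $G$ is periodic it contains no element with a hyperbolic fixed point (such an element has infinite order), so the first alternative of Proposition~\ref{SackCantor} fails and every $G$-invariant minimal subset of $K$ is finite; hence $K_{min} = \{x_{1}, \dots, x_{k}\}$ is a single finite $G$-orbit. Let $H = \mathrm{Stab}_{G}(x_{1})$, a subgroup of index $k$ in $G$, hence itself finitely generated and periodic. By Definition~\ref{intrinsic} every $h \in H$ coincides, near $x_{1}$, with a $C^{1}$ diffeomorphism of an interval fixing $x_{1}$, so $h'(x_{1})$ is well defined and the chain rule gives $(h'(x_{1}))^{\mathrm{ord}(h)} = 1$, forcing $h'(x_{1}) \in \{-1, 1\}$. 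Set $H_{0} = \ker(H \to \{-1,1\},\ h \mapsto h'(x_{1}))$; it has index at most $2$ in $H$, so it is again finitely generated and periodic, and every element of $H_{0}$ has derivative $1$ at $x_{1}$.

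The heart of the argument --- and the step I expect to be the main obstacle --- is an adaptation of the Thurston stability theorem proving that $H_{0}$ acts trivially on some clopen neighbourhood of $x_{1}$ in $K$. For $h \in H_{0}$ one has the ``affine part'' $\phi_{h}(x) = h(x) - x$, defined for $x \in K$ near $x_{1}$; it is the restriction to $K$ of the $C^{1}$ function $\tilde{h} - \mathrm{id}$ for any $C^{1}$ extension $\tilde{h}$ of $h$, vanishes at $x_{1}$, and has derivative $0$ there. The two ingredients of Thurston's proof survive in this setting: the cocycle identity $\phi_{hh'}(x) = \phi_{h}(h'(x)) + \phi_{h'}(x)$ holds for $x, h'(x) \in K$ as a tautology (it never requires a coherent choice of extensions), and for each member $s$ of a fixed finite generating set of $H_{0}$ one has an estimate $|\phi_{s}(y) - \phi_{s}(z)| \le \varepsilon(r)\,|y-z|$ for $y, z \in K$ with $|y - x_{1}|, |z - x_{1}| \le r$ and $\varepsilon(r) \to 0$, coming from the $C^{1}$ extension of $s$. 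If $H_{0}$ were nontrivial on every neighbourhood of $x_{1}$, one could then, after rescaling the $\phi_{s}$ along a suitable sequence $x_{n} \to x_{1}$ in $K$ and passing to a subsequence, extract a \emph{nonzero} homomorphism $H_{0} \to \mathbb{R}$ --- well defined on $H_{0}$ precisely because $\phi_{h}$ is intrinsic --- contradicting periodicity of $H_{0}$. (Only $C^{1}$ regularity is used here, in contrast with the $C^{1+Lip}$ needed for Sacksteder's theorem.) Verifying that Thurston's argument genuinely uses nothing beyond these two ingredients --- in particular that one never composes the extensions $\tilde{h}$, which do not form a group --- is the delicate point.

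Finally I would globalise. Shrinking, pick a clopen $V \subseteq K$ with $x_{1} \in V$, $V \cap K_{min} = \{x_{1}\}$, on which $H_{0}$ acts trivially; since then $h(V)$ depends only on the coset $hH_{0}$, the set $\bigcap_{h \in H} h(V)$ is a finite intersection, is $H$-invariant, still meets $K_{min}$ only in $x_{1}$, and on it $H$ acts through a group of order at most $2$ --- so we may assume $V$ itself has these properties. Choose coset representatives $g_{1} = \mathrm{id}, g_{2}, \dots, g_{k}$ of $H$ in $G$ with $g_{j}(x_{1}) = x_{j}$; after one further $H$-invariant shrinking of $V$ we may assume the clopen sets $V^{(j)} := g_{j}(V)$ are pairwise disjoint, so $U := \bigsqcup_{j=1}^{k} V^{(j)}$ is a clopen neighbourhood of $K_{min}$ in $K$. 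For $g \in G$ and $x = g_{j}(y)$ with $y \in V$, writing $g g_{j} = g_{j'} h$ with $h \in H$ gives $g(x) = g_{j'}(h(y)) \in g_{j'}(V) = V^{(j')}$ by $H$-invariance of $V$; thus $U$ is $G$-invariant and $j \mapsto j'$ is a homomorphism $G \to \mathrm{Sym}(k)$. Any $g$ in its kernel $N$ preserves each $V^{(j)}$ and fixes each $x_{j}$ (the unique point of $K_{min}$ in $V^{(j)}$), so $g_{j}^{-1} g g_{j} \in H$ preserves $V$ and hence restricts to $V$ in one of at most two ways; consequently $g$ restricts to each $V^{(j)}$, and therefore to $U$, in one of finitely many ways, so $N$ --- and hence $G$ --- acts on $U$ through a finite group (of order at most $k!\,2^{k}$). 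This is the required factorisation, and the lemma follows.
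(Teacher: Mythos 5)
Your proposal is correct, but it reaches the key point by a genuinely different route than the paper. The outer structure is essentially the same bookkeeping: finiteness of $K_{min}$ via Proposition~\ref{SackCantor} and absence of hyperbolic fixed points, passage to a finite-index subgroup with derivative $+1$ at the fixed point(s), and globalisation over the orbit (the paper works with the normal subgroup $G_{1}$ of elements fixing all of $K_{min}$ pointwise with derivative $1$ there and takes $U=\bigcup_{g\in G}g(U')$, a finite union by finiteness of the index; your version with $H=\mathrm{Stab}_{G}(x_{1})$, the index-two kernel $H_{0}$, and disjoint translates $g_{j}(V)$ is an equivalent packaging). The real divergence is the local triviality of the derivative-one stabilizer near $x_{1}$: you adapt the Thurston stability theorem, extracting a nonzero homomorphism $H_{0}\to\mathbb{R}$ and contradicting periodicity of the subgroup, and the two ingredients you isolate (the intrinsic displacement cocycle and the $C^{1}$ estimate using one extension at a time) are indeed exactly what makes that adaptation go through -- this matches the strategy the authors announce in their introduction. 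The paper's written proof of the claim is, however, far more elementary: since the finitely many generators of $G_{1}$ admit increasing extensions on a common interval $I$ around the fixed point (positive derivative at the fixed point suffices), a generator moving some $y\in I\cap K$ would produce a monotone, hence infinite, orbit of $y$ inside $I\cap K$, contradicting the finite order of that single element; no rescaling or limit homomorphism is needed. So the paper uses only periodicity of individual elements, while your argument uses group-level periodicity (no nontrivial homomorphism to the torsion-free group $\mathbb{R}$) together with the standard Thurston normalization, whose remaining details (boundedness of the rescaled displacements by induction on word length, a diagonal subsequence, nonvanishing on some generator) are routine but would have to be written out. Both arguments are valid; yours is heavier, but it would also apply verbatim to finitely generated subgroups admitting no nontrivial homomorphism to $\mathbb{R}$, not just periodic ones.
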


\begin{proof}[Proof of Lemma \ref{invneigh}]
Take the closure $\overline{G.x}$ of the orbit of $x$ under the group $G$. Take a minimal set $K_{min}$ of the action of $G$ on the compact set $\overline{G.x}$. Apply Lemma \ref{min} to find a $G$-invariant clopen neighbourhood $U$ of $K_{min}$ on which the action factors through a finite group action. As $K_{min} \subset \overline{G.x}$, there exists $g$ in $G$ such that $g(x) \in U$. As $U$ is $G$-invariant, the point $x$ belongs to $U$ and the lemma is proved.
\end{proof}

We now prove Lemma \ref{min}.

\begin{proof}[Proof of Lemma \ref{min}] As elements of $G$ are finite order elements, no element of $G$ has a hyperbolic fixed point on $K$. By Proposition \ref{SackCantor}, the compact set $K_{min}$ has to be a finite set. Let $G_{2}$ be the subgroup of $G$ consisting of diffeomorphisms which pointwise fix $K_{min}$. It is a finite index subgroup of $G$ and it is finitely generated as a finite index subgroup of a finitely generated group. Notice that the derivative of any diffeomorphism in $G_{2}$ at each point of $K_{min}$ is either $1$ or $-1$ as we observed that the group $G$ contained no elements with a hyperbolic fixed point. Take the subgroup $G_{1}$ of $G_{2}$ consisting of elements whose derivative at each point of $K_{min}$ is $1$: it is still a finite index (finitely generated) normal subgroup of $G$.

\begin{claim} \label{Claim} Fix a point $x$ in $K_{min}$.  There exists a $G_{1}$-invariant clopen neighbourhood of $x$ on which $G_{1}$ acts trivially.
\end{claim}

Suppose this claim holds and let us see how to finish the proof of Lemma \ref{min}. 

By the claim, there exists a $G_{1}$-invariant clopen neighbourhood $U'$ of $x$ on which $G_{1}$ acts trivially. Observe that the open set
$$ U= \bigcup_{g \in G} g(U')$$
is a $G$-invariant clopen neighbourhood of $K_{min}$. As $G_{1}$ is a normal subgroup of $G$, it acts trivially on $U$. As $G_{1}$ is a finite index subgroup of $G$, the action of $G$ on $U$ factors through a finite group action. It remains to prve the claim.

\begin{proof}[Proof of Claim \ref{Claim}] Fix a finite generating set $S$ of $G_{1}$. By Definition \ref{intrinsic}, there exists an open interval $I$ of $\mathbb{R}$ which contains $x$ such that, for any element $s$ of the generating set $S$, there exists a diffeomorphism $\tilde{s} : I \rightarrow \tilde{s}(I)$ such that $s_{|I \cap K}=\tilde{s}_{|I \cap K}$. As the derivative of the diffeomorphism $\tilde{s}$ at $x$ is $1>0$, the diffeomorphism $\tilde{s}$ is orientation preserving.

We claim that any element of $G_{1}$ pointwise fixes $I \cap K$. Indeed, otherwise, there would exist a diffeomorphism $s$ in the generating set $S$ and a point $y$ in $I \cap K$ such that $s(y) \neq y$. Replacing $s$ with $s^{-1}$ if necessary, we can suppose that $s(y)$ lies in the interval of $\mathbb{R}$ delimited by $x$ and $y$. As $\tilde{s}$ is an orientation-preserving diffeomorphism between intervals of $\mathbb{R}$, it is strictly increasing and the sequence $(s^{n}(y))_{n \geq 0}=(\tilde{s}^{n}(y))_{n \geq 0}$ is infinite. This is a contradiction as elements of $G_{1}$ are finite order elements.
\end{proof}
\end{proof}

\subsection{A generalization of Theorem \ref{Burn}}

In the rest of the article, we will need the following generalization of Theorem \ref{Burn}.

\begin{proposition} \label{Burngen}
Let $F$ be a closed subset of $K$ and $G$ be a subgroup of $\mathfrak{diff}^{1+Lip}(K)$ which consists of elements which preserve $F$. Denote by $G(F)$ the group of restrictions to $F$ of elements of $G$. Suppose the group $G(F)$ is periodic. Then
\begin{enumerate}
\item The group $G(F)$ is finite.
\item Let $G_{1}$ be the subgroup of $G$ consisting of elements which pointwise fix $F$ and have a positive derivative at each point of $F$. Then $G_{1}$ is a finite index subgroup of $G$. 
\end{enumerate}
\end{proposition}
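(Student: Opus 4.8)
The plan is to reduce Proposition \ref{Burngen} to Theorem \ref{Burn} by observing that the obstructions to a direct application are exactly the two features that the proof of Theorem \ref{Burn} already handles: a possibly infinite ambient group $G$ (only the restriction $G(F)$ is assumed periodic) and a closed set $F$ that need not be a Cantor set. First I would dispose of part (1). The restriction map $G\to G(F)$ is a surjective group homomorphism, so $G(F)$ is a quotient of $G$; but $G$ itself need not be finitely generated, so I cannot quote Theorem \ref{Burn} verbatim. Instead I would argue that it suffices to show every finitely generated subgroup of $G(F)$ is finite and uniformly bounded, or more directly: pick any element $g\in G$; restrictions of diffeomorphisms of $K$ to $F$ still ``locally look like diffeomorphisms of intervals'', so $G(F)$ sits inside a group of the same type attached to the closed set $F$. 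The cleanest route is to run the proof of Lemma \ref{min} and Lemma \ref{invneigh} with $K$ replaced by $F$ and $G$ replaced by an arbitrary finitely generated subgroup $H\le G$ whose image $H(F)$ is periodic: Sacksteder's theorem (via Proposition \ref{SackCantor} applied to $H$ acting on $K$) still forbids hyperbolic fixed points of elements of $H$ on $K$, hence a fortiori on $F$, and the local argument producing an $H(F)$-invariant clopen-in-$F$ neighbourhood of each point on which $H(F)$ acts trivially goes through unchanged because it only used orientation-preservation of local extensions and finiteness of orders. Covering $F$ by finitely many such clopen sets and taking the product of the finite local quotients embeds $H(F)$ into a finite group, so $|H(F)|$ is bounded independently of $H$; letting $H$ range over finitely generated subgroups of $G$ gives that $G(F)$ is finite.

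For part (2), once $G(F)$ is known to be finite, its kernel $G_0$ (elements of $G$ pointwise fixing $F$) is a finite-index subgroup of $G$. Inside $G_0$, consider the homomorphism $G_0\to \prod_{y\in F_{iso}\cup\{\text{a suitable finite subset}\}}\{\pm1\}$ recording the sign of the derivative at each point of $F$; this is not obviously well-defined on all of $F$ because $F$ may be infinite, so I would first use the finiteness of $G(F)$ together with Proposition \ref{SackCantor} (no hyperbolic fixed points for elements of finitely generated subgroups of $G$, since their image in $G(F)$ is periodic hence, being a subgroup of a finite group, of bounded order) to conclude that the derivative of any element of $G_0$ at any point of $F$ is $\pm1$. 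Then I would show the sign-of-derivative function $y\mapsto \mathrm{sign}(g'(y))$ is locally constant on $F$ for each fixed $g\in G_0$: near a point where $g$ is locally a $C^1$ diffeomorphism $\tilde g$ of an interval, $\tilde g'$ is continuous and nonzero, so its sign is constant on a neighbourhood. Since $F$ is compact it has finitely many ``sign-components'', giving a homomorphism from $G_0$ to a finite power of $\{\pm1\}$ whose kernel is $G_1$; this kernel has finite index in $G_0$, hence in $G$.

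The main obstacle, as I see it, is the bookkeeping needed to legitimately run the Theorem \ref{Burn} machinery when $G$ is not finitely generated and $F$ is not a Cantor set. The resolution is that everything in Section 3 that mattered was applied through Proposition \ref{SackCantor}, which is itself about finitely generated subgroups, so the natural move is: prove the finiteness statement for every finitely generated $H\le G$ with a uniform bound, then pass to the colimit. The only genuinely new technical point is verifying that a closed subset $F\subseteq K$ inherits enough structure — in particular that ``$G$-minimal subsets of $F$'' are finite — which follows because a $G$-minimal subset of $F$ is in particular a $G$-minimal subset of $K$ (closed, nonempty, invariant, minimal), so Proposition \ref{SackCantor} applied to a finitely generated subgroup of $G$ with periodic image in $G(F)$ forces it to be finite exactly as in Lemma \ref{min}. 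I would also remark that the assumption ``periodic'' enters only through ``no hyperbolic fixed points'' and ``finite-order elements have no infinite orbits'', both of which survive restriction to $F$; the local triviality argument of Lemma \ref{min} then transplants verbatim with intervals of $\mathbb{R}$ replaced by their traces on $F$.
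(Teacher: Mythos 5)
Your reduction of part (1) to a colimit over finitely generated subgroups has a genuine gap at the step ``so $|H(F)|$ is bounded independently of $H$'': the clopen cover of $F$ and the local finite quotients you produce depend on $H$, so your argument only shows that $G(F)$ is locally finite, which does not imply finite. Indeed, without a finite generation hypothesis the statement is actually false: taking $K=F=K_{2}$, the union over $n$ of the groups of affine permutations of the $2^{n}$ level-$n$ intervals is an infinite periodic subgroup of $\mathfrak{diff}^{\infty}(K_{2})$, so no uniform bound can exist. The paper's proof tacitly assumes $G$ (hence its quotient $G(F)$) finitely generated --- it explicitly invokes ``as the group $G(F)$ is finitely generated'' --- and in its only application ($G$ as in Theorem \ref{nosubsemi}) this holds; under that reading your colimit detour is unnecessary, but as written it does not close. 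A second, repairable, slip in the same part: periodicity of $H(F)$ does \emph{not} ``forbid hyperbolic fixed points of elements of $H$ on $K$'' --- an element may have a hyperbolic fixed point in $K\setminus F$, or at a point isolated in $F$, while restricting to a finite-order map of $F$. What is needed, and what the paper does, is to apply Proposition \ref{SackCantor} to the restricted action on the infinite minimal set $K_{1}\subset F$ (which is a Cantor set), so that the hyperbolic fixed point produced lies in $K_{1}$, hence at an accumulation point of $F$, where it genuinely contradicts periodicity of $G(F)$.

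Part (2) also has a gap. First, it is not true that every element of $G_{0}$ has derivative $\pm1$ at every point of $F$: at a point isolated in $F$ the derivative (computed along $K$) can have any nonzero modulus without affecting the restriction to $F$. More seriously, local constancy of $y\mapsto\mathrm{sgn}(g'(y))$ on $F$ for each fixed $g$ does not yield ``finitely many sign-components'': $F$ is totally disconnected, the clopen partition on which the sign is constant depends on $g$, and the set of continuous maps $F\to\{\pm1\}$ is infinite whenever $F$ is, so you do not obtain a homomorphism into a fixed finite power of $\{\pm1\}$. In fact, if $G$ is not finitely generated, $G_{1}$ can have infinite index in $G_{0}$ (take elements each flipping a small clopen neighbourhood of one point of an infinite family of points isolated in $F$). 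The paper's argument uses finite generation essentially: since elements of $G_{3}$ fix $F$ pointwise, their derivative equals $1$ at every accumulation point of $F$; finitely many generators and continuity of the derivative give one neighbourhood $U$ of the set $F'$ of accumulation points on which all derivatives at points of $F$ are positive; compactness makes $F\setminus U$ finite, and $G_{1}$ is the kernel of the sign morphism $G_{3}\to\{\pm1\}^{F\setminus U}$, hence of finite index.
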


Observe that the second conclusion of this proposition implies the first one. Indeed, the second conclusion implies that the subgroup $H$ of $G$ consisting of diffeomorphisms which pointwise fix $F$ has a finite index and the group $G(F)$ is isomorphic to the quotient of $G$ by the subgroup $H$. However, in the following proof, we will first prove the first conclusion and then use this first conclusion to obtain the second conclusion.

As the proof of this proposition is sometimes really similar to the proof of Theorem \ref{Burn}, we will skip some details.

\begin{proof}[Proof of Proposition \ref{Burngen}]

First, let us prove by contradiction that any minimal invariant set for the action of $G(F)$ on $F$ is finite. Suppose the action of the group $G(F)$ on $F$ has an infinite minimal invariant subset $K_{1} \subset F$. Then the set $K_{1}$ has to be a Cantor set. Hence, by Proposition \ref{SackCantor}, the action of $G(F)$ on $K_{1}$ has a hyperbolic fixed point. It is impossible as the group $G(F)$ consists of finite order elements by hypothesis.

We then need the following claim.

\begin{claim}
The action of the group $G(F)$ on $F$ has only finite orbits.
\end{claim}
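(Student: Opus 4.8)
The plan is to argue by contradiction: assume some point $x\in F$ has an infinite orbit $O:=G(F).x$, and derive a contradiction. The decisive dichotomy will be whether $O$, with the topology induced from its closure $\overline{O}\subseteq K$, is discrete or not. Since $G(F)$ acts transitively on $O$ and each element of $G(F)$ restricts to a homeomorphism of the compact $G(F)$-invariant set $\overline{O}$, the property of being isolated in $O$ is constant along the orbit; hence either every point of $O$ is isolated in $O$, or none is.

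\emph{Case 1: no point of $O$ is isolated in $O$.} Then $O$ is dense-in-itself, so $\overline{O}$ is a nonempty perfect compact totally disconnected subset of $\mathbb{R}$, that is, a Cantor set. It is $G(F)$-invariant, and restriction to $\overline{O}$ sends $G(F)$ onto a subgroup of $\mathfrak{diff}^{1+Lip}(\overline{O})$ that is finitely generated (it is generated by the restrictions of a finite generating set of $G$) and periodic. By Theorem \ref{Burn} this restriction group is finite, so the orbit of $x$, which is the same whether computed in $\overline{O}$ or in $F$, is finite --- a contradiction. This case is immediate once Theorem \ref{Burn} is invoked.

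\emph{Case 2: every point of $O$ is isolated in $O$.} Then $\mathrm{acc}(O)=\overline{O}\setminus O$ is a nonempty closed $G(F)$-invariant subset of $F$, and I would choose a minimal $G(F)$-invariant subset $M\subseteq\mathrm{acc}(O)$ --- finite by the part of the proof already carried out --- together with a point $z\in M$. Every neighbourhood of $z$ contains infinitely many points of $O$; in particular $z$ is an accumulation point of $F$, so for $g\in G(F)$ the derivative $g'(z)$ is well defined (it equals the derivative at $z$ of any local $C^{1+Lip}$ extension of $g$, the limit being taken along $F$) and is multiplicative under composition. Passing to the subgroup $G_{0}\leq G(F)$ of elements fixing $M$ pointwise and having derivative $1$ at $z$, one gets a finite-index subgroup: $G(F)$ permutes the finite set $M$, and $g\mapsto g'(z)$ maps the stabiliser of $z$ into $\{\pm 1\}$ since $G(F)$ is periodic. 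Fixing a finite generating set of $G_{0}$ and $C^{1+Lip}$ extensions of its elements near $z$, each generator is orientation-preserving near $z$ and fixes $z$; by the argument used in the proof of Lemma \ref{min} (an increasing germ fixing a point and moving a nearby point has infinite order), each generator must fix pointwise all points of $O$ in a fixed one-sided neighbourhood $(z,z+\delta)$ of $z$, on a side where $O$ accumulates. Taking $\delta$ minimal over the generating set and inducting on word length, all of $G_{0}$ fixes the infinite set $O\cap(z,z+\delta)$ pointwise; but $G_{0}$ has finite index in $G(F)$, which acts transitively on $O$, so $G_{0}$ has only finitely many orbits on $O$ --- contradicting the existence of infinitely many $G_{0}$-fixed points in $O$.

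I expect Case 2 to be the main obstacle, for three reasons: one must first exhibit a point $z$ with \emph{finite} $G(F)$-orbit (hence finite-index stabiliser) in order to do any counting, which is why minimality is used to produce the finite set $M$; the derivative at the non-isolated point $z$ must be set up intrinsically and shown independent of the chosen extension; and the passage from ``each generator fixes points of $O$ near $z$'' to ``all of $G_{0}$ does'' requires a single neighbourhood valid for a whole finite generating set before the word-length induction. Case 1 is by contrast a direct application of Theorem \ref{Burn}. Throughout, finite generation of $G$ is used (it is already needed above to invoke Proposition \ref{SackCantor}) so that finite-index subgroups remain finitely generated.
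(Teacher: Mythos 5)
Your proposal is correct, and its Case 2 is essentially the paper's own argument: take a finite minimal invariant set in the closure of the putative infinite orbit (finite by the step just proved), pass to the finite-index subgroup fixing it with trivial derivative, run the Lemma \ref{min}-style rigidity argument (an increasing germ fixing a point of $\mathrm{Fix}$ and moving a nearby point of $F$ would have infinite order, contradicting periodicity) to force this subgroup to fix orbit points accumulating on the minimal set, and conclude by the finite-index/transitivity contradiction. Your dichotomy is in fact redundant --- the Case 2 argument nowhere uses that the orbit is discrete, so it already covers Case 1, where your direct appeal to Theorem \ref{Burn} applied to the Cantor set $\overline{O}$ is a correct but unnecessary shortcut; the paper simply runs the single argument on a minimal set inside $\overline{G.p}$.
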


\begin{proof}
Suppose there exists a point $p$ of $F$ such that the orbit $G.p$ is infinite. Take a minimal invariant set $K_{2} \subset \overline{G.p}$ for the action of $G(F)$ on $F$. We just saw that the set $K_{2}$ has to be finite. Take the finite index subgroup $G_{2}$ of $G(F)$ consisting of elements which pointwise fix the finite set $K_{2}$ and have a positive derivative at each point of $K_{2}$. As in the proof of Theorem \ref{Burn}, we can prove that the group $G_{2}$ has to pointwise fix a neighbourhood of $K_{2}$ in $F$. This is not possible as $K_{2}$ is accumulated by an infinite orbit under $G$.
\end{proof}

Hence any orbit of the action of $G(F)$ on $F$ is finite. As in the proof of Theorem \ref{Burn} and as the group $G(F)$ is finitely generated, we obtain the following property. If a finitely generated subgroup of $G(F)$ pointwise fixes a point of $F$, then it pointwise fixes a neighbourhood of this point: otherwise, the group $G(F)$ would have an infinite orbit. Hence, using the compactness of $F$, we deduce that the group $G(F)$ is finite.

Let us prove the second point of the proposition. As the group $G(F)$ is finite, the subgroup $G_{3}$ of $G$ consisting of elements which pointwise fix $F$ is a finite index subgroup of $G$ : it is the kernel of the restriction morphism $G \rightarrow G(F)$. Hence it suffices to prove that the group $G_{1}$ is a finite index subgroup of $G_{3}$.

Observe that, as elements of $G_{3}$ pointwise fix $F$, the derivative of any element of $G_{3}$ at each accumulation point of $F$ is equal to one. Let us denote by $F'$ the set of accumulation points of $F$. As the group $G_{3}$ is finitely generated, there exists a neighbourhood $U$ of $F'$ such that the derivative of any element of $G_{3}$ is positive on $U$. Observe that the set $F \setminus U$ is compact and consists of isolated points : this set is finite. Moreover, the group $G_{1}$ is the kernel of the morphism
$$\begin{array}{rcl}
G_{3} & \rightarrow & \left\{ -1,1 \right\}^{F \setminus U} \\
g & \mapsto & (\mathrm{sgn}(g'(x)))_{x \in F \setminus U}
\end{array},$$
where, for any real number $\lambda \neq 0$, $\mathrm{sgn}(\lambda)=1$ if $\lambda >0$ and $\mathrm{sgn}(\lambda)=-1$ if $\lambda <0$. As the group $\left\{ -1,1 \right\}^{F \setminus U}$ is finite, the group $G_{1}$ is a finite index subgroup of $G_{3}$.
\end{proof}

\section{Groups without free subsemigroups on two generators}

In this section, we prove Theorem \ref{nosubsemi}. As in the proof of Theorem \ref{Burn}, we will use Definition \ref{intrinsic} of the group $\mathfrak{diff}^{1+Lip}(K)$. Hence we see our Cantor set $K$ as a subset of $\mathbb{R}$.

We fix a group $G$ satisfying the hypothesis of Theorem \ref{nosubsemi}: the subgroup $G$ of $\mathfrak{diff}^{1+Lip}(K)$ does not contain any free subsemigroup on two generators.

The proof is divided in three steps which correspond to subsections \ref{42}, \ref{43} and \ref{44}.
\begin{enumerate}
\item First, we find a finite index subgroup $G_{1}$ of $G$ such that any minimal invariant set for the action of $G_{1}$ on $K$ is a fixed point.
\item Then we prove that any element of the derived subgroup $G'_{1}$ of $G_{1}$ pointwise fixes a neighbourhood of $\mathrm{Fix}(G_{1})$. This is the main step of the proof which heavily relies on distortion estimates.
\item We deduce the theorem from the two above steps.
\end{enumerate}

The following subsection is devoted to a useful preliminary result.

\subsection{A preliminary result}

We will often need the following result. For any element $g$ in $\mathfrak{diff}^{1}(K)$, we denote by $\mathrm{Per}(g)$ the set of periodic points of $g$, \emph{i.e.} the set of points $p$ of $K$ such that there exists an integer $n \geq 1$ such that $g^{n}(p)=p$.

\begin{lemma} \label{periodicpoints}
For any element $g$ in $\mathfrak{diff}^{1}(K)$, there exists $N \geq 1$ such that
$$ \mathrm{Per}(g)= \left\{ p \in K, \ g^{N}(p)=p \right\}.$$
\end{lemma}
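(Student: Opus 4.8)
The statement is that for a single $C^1$-diffeomorphism $g$ of a Cantor set $K$, the periodic points of $g$ all have period dividing a single uniform integer $N$. The plan is to combine two ingredients: a local rigidity statement at a periodic point coming from the derivative, and a compactness argument. The key point is that at a periodic point $p$ of period $k$, the map $g^k$ fixes $p$, so its derivative $(g^k)'(p) = \prod_{i=0}^{k-1} g'(g^i(p))$ is some nonzero real; if this number has absolute value $\neq 1$, then $p$ is a hyperbolic fixed point of $g^k$ and one can hope to control the period locally. But of course $|(g^k)'(p)|$ could equal $1$, in which case $(g^k)'(p) = \pm 1$, and we need to handle this separately.

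**Main steps.** First I would show that for each $p \in \mathrm{Per}(g)$, say of period $k_p$, there is a clopen neighbourhood $U_p$ of $p$ and an integer $m_p$ such that every periodic point of $g$ lying in $U_p$ has period dividing $m_p$ (in fact one can aim for: $g^{m_p}$ is the identity on the $g$-orbit of $U_p \cap \mathrm{Per}(g)$, or more simply that $g^{m_p}(q) = q$ for all $q \in U_p$ that are periodic). The natural choice is $m_p = 2 k_p$ or $m_p = k_p$ times something accounting for the sign of the derivative. Here is the local argument: since $g^{k_p}(p) = p$, pick a $C^1$ extension $\widetilde{h}$ of $g^{k_p}$ near $p$. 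If $|(g^{k_p})'(p)| \neq 1$, then $\widetilde{h}$ has a hyperbolic fixed point at $p$, so on a small interval $I \ni p$ the map $\widetilde{h}$ has $p$ as its only fixed point and every orbit in $I\setminus\{p\}$ escapes $I$ monotonically; hence no point of $(I \cap K)\setminus\{p\}$ can be periodic for $g^{k_p}$, and one checks a point of $I\cap K$ periodic for $g$ must actually be fixed by $g^{k_p}$ after noting its whole $g$-orbit... — this needs a little care, better to shrink so that finitely many iterates $g^j(I)$ for $0\le j<k_p$ are controlled. If instead $(g^{k_p})'(p) = \pm 1$, I would replace $g^{k_p}$ by $g^{2k_p}$ so that the derivative at $p$ is $1 > 0$, hence the extension $\widetilde{h}$ of $g^{2k_p}$ is orientation-preserving near $p$; then, exactly as in the proof of Lemma \ref{min} (the claim at the end), a point $q \in I \cap K$ with $g^{2k_p}(q) \neq q$ generates an infinite monotone orbit $(\widetilde{h}^n(q))_n$, which is impossible since... wait — here $g$ need not be periodic. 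So instead the conclusion is simply: $g^{2k_p}$ is the identity on $I \cap K$, OR $g^{2k_p}$ has no periodic points in $I\cap K$ other than fixed points; in either case every $g$-periodic point of $I \cap K$ is fixed by $g^{2 k_p}$. Let me restate the local claim cleanly: for each $p\in\mathrm{Per}(g)$ there is a clopen $U_p\ni p$ and $m_p\ge1$ with $\mathrm{Per}(g)\cap U_p \subseteq \mathrm{Fix}(g^{m_p})$.

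**Compactness and conclusion.** Granting the local claim, cover the compact set $\overline{\mathrm{Per}(g)}$... no — cover $\mathrm{Per}(g)$; but $\mathrm{Per}(g)$ need not be closed. Better: I claim $\mathrm{Per}(g)$ is closed. Hmm, that is itself essentially the content. The cleanest route: by the local claim, the sets $U_p$ for $p \in \mathrm{Per}(g)$, together with the complement of $\overline{\mathrm{Per}(g)}$... Actually I will argue as follows. For each $n\ge1$ the set $\mathrm{Fix}(g^n) = \{q : g^n(q)=q\}$ is closed, and $\mathrm{Per}(g) = \bigcup_{n\ge1}\mathrm{Fix}(g^n)$. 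By the local claim, every point of $\mathrm{Per}(g)$ has a clopen neighbourhood $U_p$ on which $\mathrm{Per}(g)$ coincides with $\mathrm{Fix}(g^{m_p})$; in particular $\mathrm{Per}(g)\cap U_p$ is closed in $U_p$, so $\mathrm{Per}(g)$ is relatively closed in the open set $W := \bigcup_{p\in\mathrm{Per}(g)}U_p$, which contains $\mathrm{Per}(g)$; hence $\mathrm{Per}(g)$ is closed in $W$, and since $\mathrm{Per}(g)\subseteq W$ this means $\overline{\mathrm{Per}(g)} \cap W = \mathrm{Per}(g)$. But also $\overline{\mathrm{Per}(g)}\subseteq \overline{W}$... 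I need $\overline{\mathrm{Per}(g)}\subseteq W$. This holds: if $q\in\overline{\mathrm{Per}(g)}$ were outside $W$, pick $p\in\mathrm{Per}(g)$ near $q$; then $p\in U_p$ open, contradiction once $q$ is close enough — so indeed $\overline{\mathrm{Per}(g)}\subseteq W$ and thus $\mathrm{Per}(g) = \overline{\mathrm{Per}(g)}$ is compact. Finally, $\mathrm{Per}(g)$ compact is covered by finitely many $U_{p_1},\dots,U_{p_r}$; set $N = \mathrm{lcm}(m_{p_1},\dots,m_{p_r})$. Then every $q \in \mathrm{Per}(g)$ lies in some $U_{p_i}$, hence $g^{m_{p_i}}(q)=q$, hence $g^N(q)=q$; conversely any $q$ with $g^N(q)=q$ is periodic. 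This gives the desired $N$.

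**Main obstacle.** The delicate step is the local claim when $(g^{k_p})'(p)=\pm1$: there I cannot use hyperbolicity, and I must instead rule out other nearby periodic points by the monotonicity argument for orientation-preserving interval maps (passing to $g^{2k_p}$), being careful that the relevant extension is defined and orientation-preserving on a fixed interval and that I only use finitely many iterates. Once the correct formulation "$\mathrm{Per}(g)\cap U_p\subseteq\mathrm{Fix}(g^{m_p})$" is in hand, the hyperbolic and the indifferent cases merge and the rest is soft topology. A secondary subtlety is making sure I pass between $g$-periodicity and $g^{k_p}$-fixedness correctly: a $g$-periodic point $q\in U_p$ a priori has some period, but its $g$-orbit may leave $U_p$; I handle this by choosing $U_p$ small enough that I only ever need to control the first $k_p$ iterates — i.e. I work directly with the single map $g^{k_p}$ (or $g^{2k_p}$) on $U_p$ and observe that $\mathrm{Fix}(g^{k_p}) \supseteq \mathrm{Per}(g)\cap\mathrm{Fix}(g^{k_p})$ trivially while showing no extra points appear.
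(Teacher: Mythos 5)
Your local step is essentially the paper's: near a periodic point $p$ you pass to $g^{2k_p}$ so as to have an increasing local extension fixing $p$, and the monotone-orbit argument shows that every $g$-periodic point of a small neighbourhood $U_p$ is actually fixed by $g^{2k_p}$ (so the period is locally bounded and $\mathrm{Per}(g)\cap U_p$ is relatively closed). The case distinction hyperbolic/indifferent is unnecessary, but harmless.

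The gap is in the global step, and you half-noticed it yourself ("I claim $\mathrm{Per}(g)$ is closed. Hmm, that is itself essentially the content."). Your argument that $\overline{\mathrm{Per}(g)}\subseteq W=\bigcup_{p\in\mathrm{Per}(g)}U_p$ is a non sequitur: if $q\in\overline{\mathrm{Per}(g)}$ and $p\in\mathrm{Per}(g)$ is close to $q$, the open set $U_p$ contains $p$ but there is no reason it contains $q$ -- the neighbourhoods $U_p$ have no uniform size and may shrink as $p\to q$. What the local claim gives is only that $\mathrm{Per}(g)$ is open in its closure; it does not exclude the scenario of periodic points with periods tending to infinity accumulating on a non-periodic point, and ruling that out is the real content of the lemma. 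The paper closes this hole dynamically (Claim \ref{closure}): for $q\in\overline{\mathrm{Per}(g)}$, the closure of the $g$-orbit of $q$ contains a minimal set, which by Proposition \ref{SackCantor} must be a finite, hence periodic, orbit (an infinite minimal set would be a Cantor set and Sacksteder would produce a hyperbolic periodic point, which cannot be accumulated by other periodic points, contradicting $F\subset\overline{\mathrm{Per}(g)}$); then, since by the local claim $\mathrm{Per}(g)$ is relatively open in $\overline{\mathrm{Per}(g)}$ near that periodic orbit, some iterate $g^{n}(q)$ lies in $\mathrm{Per}(g)$, hence $q\in\mathrm{Per}(g)$. Without an argument of this kind (or some substitute using the orbit structure, not just point-set topology), your finite subcover and the integer $N$ cannot be extracted, so the proof as written is incomplete.
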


\begin{proof}
Fix an element $g$ in $\mathfrak{diff}^{1}(K)$ and define

$$\begin{array}{rrcl}
T: & \mathrm{Per}(g) & \rightarrow & \mathbb{R}_{+} \\
 & p & \mapsto & T(p)= \min \left\{ T \geq 1, \ g^{T}(p)= p \right\}

\end{array}
.$$

This lemma is a consequence of the two following claims.

\begin{claim} \label{locallybounded}
For any point $p$ in $\mathrm{Per}(g)$, there exists an open neighbourhood $U$ of the point $p$ such that
$U \cap \mathrm{Per}(g)= U \cap \overline{\mathrm{Per}(g)}$ and $T_{|U \cap \mathrm{Per}(g)}$ is bounded.
\end{claim}

\begin{claim} \label{closure}
$\overline{\mathrm{Per}(g)}=\mathrm{Per}(g)$.
\end{claim}

By Claim \ref{locallybounded} and Claim \ref{closure}, the set $\mathrm{Per}(g)$ is compact and the function $T$ is locally bounded on $\mathrm{Per}(g)$. Hence the function $T$ is bounded by an integer $M$. It suffices to take $N= M!$ to prove the lemma.
\end{proof}

\begin{proof}[Proof of Claim \ref{locallybounded}]
By definition of $T$, for any point $p$ of $\mathrm{Per}(g)$, $g^{T(p)}(p)=p$. Recall that the set $K$ is contained in $\mathbb{R}$. Fix a point $p$ in $\mathrm{Per}(g)$. By Definition \ref{intrinsic}, there exists an open interval $I'$ of $\mathbb{R}$ which contains the point $p$ and a homeomorphism $\tilde{h}_{1}: I' \rightarrow \tilde{h}_{1}(I)$ such that $\tilde{h}_{1|I' \cap K}=g^{T(p)}_{|I \cap K}$. The homeomorphism $\tilde{h}_{1}$ is not necessarily orientation-preserving but there exists an open interval $I$ of $\mathbb{R}$ which contains the point $p$ and an orientation-preserving homeomorphism $\tilde{h}:I \rightarrow \tilde{h}(I)$ such that $g^{2T(p)}_{|I \cap K}= \tilde{h}_{|I \cap K}$. 

Let $U= I \cap K$. We will prove that, for any point $x$ in $U$, either $g^{2T(p)}(x)=x$ or $x \notin \mathrm{Per}(g)$. This proves the claim as
$$ \mathrm{Per}(g) \cap U = \left\{ x \in U, \ g^{2T(p)}(x)=x \right\}$$
is closed in $U$ and $T$ is bounded by $2T(p)$ on $U$.

Take a point $x$ in $U$ and suppose that $g^{2T(p)}(x) \neq x$. Then, as $\tilde{h}$ is an increasing map which fixes the point $p$, either
$$ \left\{ g^{2nT(p)}(x), n >0 \right\}= \left\{ \tilde{h}^{n}(x), \ n>0 \right\}$$
is infinite and contained in $U$ or
$$ \left\{ g^{2nT(p)}(x), n <0 \right\}=\left\{ \tilde{h}^{n}(x), \ n<0 \right\}$$
is infinite and contained in $U$. In either cases $x \notin \mathrm{Per}(g)$.
\end{proof}

\begin{proof}[Proof of Claim \ref{closure}]
Let $p$ be a point of $\overline{\mathrm{Per}(g)}$. Then the closure of the orbit of $p$ under the action of the diffeomorphism $g$ contains a a minimal set $F$. By Proposition \ref{SackCantor}, this set $F$  has to be a periodic orbit of $g$ and $F \subset \mathrm{Per}(g)$: there is no periodic point of $g$ in a neighbourhood of a hyperbolic fixed point of $g$. Moreover, as, by Claim \ref{locallybounded}, the set $\mathrm{Per}(g)$ is open in $\overline{\mathrm{Per}(g)}$, there exists $n>0$ such that the point $g^{n}(p)$ belongs to $\mathrm{Per}(g)$. Hence the point $p$ belongs to $\mathrm{Per}(g)$. 
\end{proof}

\subsection{Definition of the finite index subgroup $G_{1}$} \label{42}

This section is devoted to the proof of the following proposition. For any subgroup $H$ of $\mathfrak{diff}^{1+Lip}(K)$, we denote by $\mathrm{Fix}(H)$ the subset of $K$ consisting of points which are fixed under all the elements of the group $H$.

\begin{proposition} \label{minpoint}
There exists a finite index subgroup $G_{1}$ of $G$ such that the two following properties hold.
\begin{enumerate}
\item Any minimal invariant subset for the action of $G_{1}$ on $K$ is a point of $\mathrm{Fix}(G_{1})$.
\item For any diffeomorphism $g$ in the group $G_{1}$ and any point $p$ of $\mathrm{Fix}(G_{1})$, we have
$$ g'(p)>0.$$
\end{enumerate}
\end{proposition}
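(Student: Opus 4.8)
The strategy is to build $G_1$ as a finite-index subgroup in two stages: first arrange that every minimal set is a finite orbit, then pass to a further finite-index subgroup that fixes those orbits pointwise with positive derivative, and finally argue that no new minimal sets appear. I would start by invoking Proposition \ref{SackCantor}: if some element of $G$ had a hyperbolic fixed point, say $g$ with $g(p)=p$ and $|g'(p)|\neq 1$, then the cyclic semigroup generated by $g$ together with a suitably chosen conjugate would produce a free subsemigroup on two generators (this is the classical ``ping-pong'' argument: a hyperbolic fixed point gives a contracting map on a clopen neighbourhood, and combining it with a translate of itself yields the free subsemigroup). Hence $G$ has no element with a hyperbolic fixed point, so by Proposition \ref{SackCantor} every minimal invariant subset of the action of $G$ on $K$ is finite.

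Next I would control all these finite minimal sets at once. Since $K$ is compact and the minimal sets are pairwise disjoint closed subsets of $K$, only finitely many of them can have a fixed cardinality, but a priori there could be infinitely many minimal sets of growing size; to handle this, cover $K$ by the (finitely many, by compactness) clopen neighbourhoods coming from the local analysis and extract a single finite $G$-invariant set $F$ that meets every minimal set, or more simply work with one minimal set at a time exactly as in the proof of Lemma \ref{min}. Concretely: let $K_{min}$ be a minimal set, so it is a finite orbit. Apply Proposition \ref{Burngen} with this $F=K_{min}$? That does not immediately apply because $G$ is not assumed periodic. Instead, let $G^{(0)}$ be the finite-index subgroup of $G$ fixing $K_{min}$ pointwise and with positive derivative at each point of $K_{min}$ (finite index because the image of $G$ in the symmetric group of $K_{min}$ times $\{\pm1\}^{K_{min}}$ is finite). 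Using the $C^1$ structure and Definition \ref{intrinsic}, each point $p\in K_{min}$ has an interval neighbourhood on which a finite generating set of $G^{(0)}$ acts by orientation-preserving maps fixing $p$; the orbit-closure argument of Lemma \ref{min} (no hyperbolic fixed point, no free subsemigroup, hence no element with a point pushed monotonically toward $p$ can have infinite orbit unless it generates a free subsemigroup with a conjugate) then forces these generators to fix a clopen neighbourhood of $p$. Saturating under $G$ gives a $G^{(0)}$-invariant clopen $U \supset K_{min}$ on which $G^{(0)}$ acts trivially.

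Having done this for every minimal set and used compactness, I obtain a finite-index subgroup $G_1 \le G$ and a clopen set $V$, equal to a neighbourhood of the (finite) union of all minimal sets, on which $G_1$ acts trivially; enlarging, $\mathrm{Fix}(G_1)$ contains this union. The key remaining point is that $G_1$ has \emph{no} minimal set outside $\mathrm{Fix}(G_1)$: any minimal set $M$ for $G_1$ is contained in some minimal set for $G$ (not literally, but its $G$-orbit closure contains a $G$-minimal set $M_0$), and since $M_0 \subset V$ where $G_1$ acts trivially, $M_0$ consists of $G_1$-fixed points; by minimality of $M$ under $G_1$ and the fact that $\overline{G_1 \cdot x}$ for $x\in M$ must accumulate on or reach a $G_1$-fixed point inside $V$, one deduces $M$ is a single point of $\mathrm{Fix}(G_1)$. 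Property (2), $g'(p)>0$ for $g\in G_1$ and $p\in\mathrm{Fix}(G_1)$, is built into the construction since at every stage we passed to the kernel of the sign-of-derivative homomorphism.

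\textbf{Main obstacle.} The delicate step is the ``no element pushes a point monotonically toward a fixed point'' argument: I must show that the absence of a free subsemigroup on two generators (a much weaker hypothesis than periodicity, which was what powered Lemma \ref{min}) still forces the relevant local maps to be the identity near a fixed point. This is exactly where a ping-pong / Schottky-type construction is needed: if an orientation-preserving $g\in G_1$ with $g'(p)=1$ moved a point $y$ near $p$, then either $g$ already has a hyperbolic fixed point nearby (excluded), or one combines $g$ with a conjugate $hgh^{-1}$ by some $h\in G$ to realize a free subsemigroup acting on two disjoint clopen intervals — and making this rigorous in the Cantor setting, without a global order, is the real content, presumably carried out in the author's Section 4 via the distortion estimates advertised for step 2.
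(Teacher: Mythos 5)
Your first step already overreaches. You claim that an element $g$ with a hyperbolic fixed point, ``together with a suitably chosen conjugate'', yields a free subsemigroup, and hence that $G$ contains no element with a hyperbolic fixed point at all. This is false in general: a cyclic group generated by a single element with a hyperbolic fixed point contains no free subsemigroup on two generators, and no suitable conjugate exists. What is true, and what the paper proves (Lemma \ref{minimalfinite}), is weaker and needs more input: if a $G$-minimal set were \emph{infinite} it would be a Cantor set, Sacksteder (via Proposition \ref{SackCantor}) then gives a hyperbolic fixed point $p$ \emph{inside that minimal set}, and minimality supplies a second element of $G$ sending $p$ to a nearby distinct point of its orbit; only with that second map can one run the ping-pong of Lemma \ref{positivepingpong}. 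So your conclusion that all $G$-minimal sets are finite is correct, but the argument you give for it does not work.

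The more serious gap is your second step. You want each finite minimal set to have a clopen neighbourhood on which the finite-index subgroup $G^{(0)}$ acts trivially, ``exactly as in the proof of Lemma \ref{min}''. But that proof is powered by periodicity: the contradiction there is that a monotone orbit $(s^{n}(y))_{n}$ is infinite, which is impossible only because the elements have finite order. Under the hypothesis ``no free subsemigroup on two generators'' the claim is simply false: a cyclic group generated by one parabolic-type element (fixing $p$, derivative $1$ at $p$, but moving points arbitrarily close to $p$) has no free subsemigroup and fixes no neighbourhood of $p$, and combining such an element with a conjugate of itself never produces ping-pong. Note that Proposition \ref{minpoint} does not assert that $G_{1}$ acts trivially near $\mathrm{Fix}(G_{1})$; the related (and weaker, elementwise, and only for $G_{1}'$) statement is Proposition \ref{accumulation}, proved later with Kopell-type and distortion estimates, and it sits downstream of Proposition \ref{minpoint} rather than being an ingredient of it. The device your proposal is missing is the global set $F=\bigcap_{g\in G}\mathrm{Per}(g)$: by Lemma \ref{periodicpoints} each $\mathrm{Per}(g)$ is the fixed-point set of some power $g^{N}$, so $F$ is closed, $G$-invariant, and the restricted group $G(F)$ is periodic; Proposition \ref{Burngen} then produces in one stroke the finite-index subgroup $G_{1}$ fixing $F$ pointwise with positive derivatives (this also disposes of your unresolved worry about infinitely many minimal sets). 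Finally, for a $G_{1}$-minimal set $K_{min}$, the saturation $\bigcup_{g\in G}g(K_{min})$ is $G$-minimal, hence finite by Lemma \ref{minimalfinite}, hence contained in $F\subset\mathrm{Fix}(G_{1})$, so $K_{min}$ is a single fixed point --- no triviality on a neighbourhood is needed anywhere.
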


We start the proof of this proposition by the following lemma, which is more or less a consequence of Sacksteder's Theorem.

\begin{lemma} \label{minimalfinite}
Any minimal invariant subset for the action of the group $G$ on $K$ is finite.
\end{lemma}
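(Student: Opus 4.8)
The plan is to derive this lemma from Proposition \ref{SackCantor} together with the hypothesis that $G$ contains no free subsemigroup on two generators. First I would argue by contradiction: suppose $K_{min} \subset K$ is an infinite minimal invariant set for the action of $G$ on $K$. Since $K_{min}$ is infinite, its set of accumulation points is a nonempty closed $G$-invariant subset, so by minimality $K_{min}$ equals its own set of accumulation points; being also compact and totally disconnected (as a closed subset of the Cantor set $K$), $K_{min}$ is itself a Cantor set. Now $G$ is finitely generated, so Proposition \ref{SackCantor} applies and, since alternative (2) is excluded by $K_{min}$ being an infinite minimal set, alternative (1) holds: some element $g \in G$ has a hyperbolic fixed point $p \in K$, say with $|g'(p)| = \lambda \neq 1$.

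The heart of the argument is then the classical observation that a hyperbolic fixed point, combined with a second element whose orbit of $p$ separates suitably, produces a free subsemigroup — this is the Cantor-set analogue of the ping-pong / Sacksteder argument used in one-dimensional dynamics. Concretely, replacing $g$ by a power and possibly by $g^{-1}$, we may assume $g$ is an attracting hyperbolic fixed point at $p$: there is a clopen neighbourhood $I \cap K$ of $p$ in $K$ (with $I$ an interval) on which $g$ is an orientation-preserving contraction toward $p$, and $g^n(x) \to p$ for every $x \in I \cap K$. Since $K_{min}$ is minimal and infinite, the point $p$ is not fixed by all of $G$; choose $h \in G$ with $h(p) \neq p$. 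By minimality of $K_{min}$ the orbit of $p$ is dense in $K_{min}$, so after composing $h$ with an appropriate element we may arrange that $h(p) \in I \cap K$ while $p \notin \overline{h^{-1}(I \cap K)}$, i.e. there is a smaller clopen set $J \subset I \cap K$ containing $p$ with $h(J) \subset I \cap K$ and $J \cap h(\overline{I \cap K}) = \emptyset$ or a similar separation — the point being to set up two maps $a = g$ and $b = h g h^{-1}$ (or $a=g$, $b = g^m h$ for large $m$) whose images of a fixed clopen ball are disjoint clopen balls strictly inside that ball. Then every nonempty word in $a, b$ (no inverses) maps the ball into one of the two disjoint sub-balls, hence distinct words give distinct maps, contradicting the absence of a free subsemigroup on two generators.

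Thus no infinite minimal invariant set can exist, which is exactly the claim. The main obstacle I anticipate is the bookkeeping in the ping-pong construction: unlike on a connected interval, where one can simply nest intervals around the attracting point, here one must work with clopen subsets of $K$ and verify that the contraction from hyperbolicity, together with a translate provided by minimality of $K_{min}$, really yields two maps sending a common clopen set into two disjoint clopen subsets of it. One has to be careful that the element $h$ can be chosen so that $h(p)$ lies deep enough inside the basin $I \cap K$ (using density of the orbit of $p$ in $K_{min}$ and, if necessary, a high power of $g$ to make the contraction as strong as needed) and that the resulting two sub-balls are genuinely disjoint; this is where the orientation-preserving local model for $g$ near the hyperbolic point and the freedom to pass to powers of $g$ are essential. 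Everything else — reducing to $K_{min}$ a Cantor set, invoking Proposition \ref{SackCantor}, and extracting the contradiction from a free subsemigroup — is routine.
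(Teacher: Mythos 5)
Your plan is correct and follows essentially the same route as the paper: reduce to the case where $K_{min}$ is a Cantor set, invoke Proposition \ref{SackCantor} to get an element with a contracting hyperbolic fixed point $p$, use minimality to find $h\in G$ sending $p$ back into the basin $I\setminus\{p\}$, and compose the contraction with this translate to obtain two contractions with distinct attracting fixed points, to which the positive ping-pong lemma (Lemma \ref{positivepingpong}) applies, contradicting the absence of a free subsemigroup. The only point to make explicit is that the hyperbolic fixed point must be taken inside $K_{min}$ itself (which the proof of Proposition \ref{SackCantor}, i.e.\ Sacksteder's theorem applied to the minimal Cantor set, does provide), since your step producing $h(p)\in I\setminus\{p\}$ relies on the $G$-orbit of $p$ being dense in $K_{min}$.
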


\begin{proof}

If one looks closely at the proof of Sacksteder's theorem, one can see that it implies directly that, if the action of the group $G$ on the Cantor set $K$ has an infinite minimal invariant set, then the group $G$ contains a free subsemigroup on two generators. We provide here a proof which uses Sacksteder's theorem.

Suppose that the action of the group $G$ on $K$ has an infinite minimal subset $K_{min}$. We want to prove that the group $G$ contains a free subsemigroup on two generators. This will give a contradiction and will complete the proof of the lemma. To do this, we use the following classical lemma (see \cite[Proposition 2 p.188]{dlH} for a proof).

\begin{lemma}[Positive ping-pong lemma] \label{positivepingpong}
Let $H$ be a group acting on a set $E$. Assume there exist elements $h_{1}$ and $h_{2}$ of $H$ as well as disjoint nonempty subsets $A$ and $B$ of $E$ such that
$$\left\{ \begin{array}{l}
h_{1}(A \cup B) \subset A \\
h_{2}(A \cup B) \subset B
\end{array}
\right.
.$$
Then the subsemigroup of $G$ generated by $h_{1}$ and $h_{2}$ is free.
\end{lemma}

First, as the set $K_{min}$ is infinite, it contains accumulation points. As the set of accumulation points of $K_{min}$ is closed and invariant under the action of the group $G$ and as $K_{min}$ is a minimal invariant set, we deduce that the set $K_{min}$ is a Cantor set. Then, by Proposition \ref{SackCantor}, there exists an element $h$ of $G$ with a hyperbolic fixed point $p \in K_{min}$. Taking $h^{-1}$ instead of $h$ if necessary, we can suppose that $ \left| h'(p) \right| <1$. Taking the definition of a diffeomorphism of $K$, we know that  there exists a $C^{1+Lip}$-diffeomorphism $\tilde{h}$ from an open interval $I$ of $\mathbb{R}$ which contains the point $p$ to an open interval $\tilde{h}(I)$ such that $\tilde{h}_{| I \cap K}=h_{| I \cap K}$. Moreover, choose the interval $I$ sufficiently small so that $\displaystyle \sup_{x \in I} | \tilde{h}'(x) | <1$. Hence the sequence of sets $(\tilde{h}^{n}(I))_{n \geq 0}$ is decreasing and
$$ \cap_{n \in \mathbb{N}} \tilde{h}^{n}(I)= \left\{ p \right\}.$$
As $K_{min}$ is a minimal Cantor set, there exists a point $p'$ in $G.p \cap (I \setminus \left\{ p \right\})$. Fix an element $g$ of the group $G$ such that $g(p)=p'$. Let $\tilde{g}$ be a diffeomorphism between two intervals of $\mathbb{R}$ which coincides with $g$ on a neighbourhood of $p$. Take an integer $N_{1}$ sufficiently large so that $\tilde{g} \circ \tilde{h}^{N_{1}}$ is defined on $I$, $\displaystyle \sup_{x \in I} |(\tilde{g} \circ \tilde{h}^{N_{1}})'(x) | <1$ and $g \circ h^{N_{1}}(I \cap K) \subset I \cap K$. The map $h_{1}=g \circ h^{N_{1}}_{| I \cap K}$ has a unique fixed point $p'$: indeed, the map $\tilde{g} \circ \tilde{h}^{N_{1}}$ has a fixed point on $I$ which is the limit of any positive orbit under the action of $\tilde{g} \circ \tilde{h}^{N_{1}}$. Hence this fixed point belongs to $K$. Observe that $h_{1}(p)=g(p) \neq p$. Hence there exists an open interval $J_{1}$ which contains the point $p$ such that $h_{1}(J_{1})$ is disjoint from $J_{1}$. Finally take an open interval $J_{2} \subset I$ which contains the point $p'$ and is disjoint from the interval $J_{1}$. Then there exist integers $N_{2} >0$ and $N_{3}>0$ such that
$$ h^{N_{2}}(J_{1} \cup J_{2}) \subset J_{1}$$
and
$$ h_{1}^{N_{3}}(J_{1} \cup J_{2}) \subset J_{2}.$$
By the positive ping-pong lemma, the group $G$ contains a free semigroup on two generators.
\end{proof}

Now, we can finish the proof of Proposition \ref{minpoint}

\begin{proof}[End of the proof of Proposition \ref{minpoint}]
By Lemma \ref{minimalfinite}, any minimal subset for the action of the group $G$ on the Cantor set $K$ is contained in the set
$$ F= \bigcap_{g \in G} \mathrm{Per}(g).$$
By Lemma \ref{periodicpoints}, the set $F$ is a closed subset of $K$. Moreover, it is invariant under the action of $G$. Let us denote by $G(F)$ the group of restrictions to $F$ of elements of $G$. By definition of $F$ and by Lemma \ref{periodicpoints}, the group $G(F)$ consists of finite order elements. By Proposition \ref{Burngen}, the group $G(F)$ is finite. Moreover, let $G_{1}$  be the subgroup of $G$ consisting of elements which pointwise fix $F$ and whose derivative at each point of $F$ is positive. Then the group $G_{1}$ is a finite index subgroup of $G$ by the same proposition. 

Let us check that this group $G_{1}$ satisfies the wanted property. Let $K_{min}$ be a minimal invariant subset of the action of the group $G_{1}$ on the Cantor set $K$. Then the set
$$ M=\bigcup_{g \in G} g(K_{min})$$
is a closed $G$-invariant subset of $K$ which consists of a finite number of copies of $K_{min}$. Any $G$-orbit of a point in this set $M$ is dense in $M$: it is a minimal subset for the action of $G$ on $K$. Hence, by Lemma \ref{minimalfinite},
$$ K_{min} \subset M \subset F \subset \mathrm{Fix}(G_{1}).$$
\end{proof}

\subsection{Behaviour of individual elements of $G'_{1}$} \label{43}

Let us fix a subgroup $G_{1}$ of $G$ which satisfies Proposition \ref{minpoint} for the rest of this section. We prove the following result. Recall that the derived subgroup $H'$ of a group $H$ is the subgroup of $H$ generated by the commutators of elements of $H$, \emph{i.e.} elements of the form $[h_{1},h_{2}]=h_{1}h_{2}h_{1}^{-1}h_{2}^{-1}$ with $h_{1}, \ h_{2} \in H$.

\begin{proposition} \label{accumulation}
Any element of the derived subgroup $G'_{1}$ of the group $G_{1}$ pointwise fixes a neighbourhood of $\mathrm{Fix}(G_{1})$.  
\end{proposition}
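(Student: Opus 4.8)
The plan is to reduce the statement to a purely local assertion about germs and then to contradict the absence of free subsemigroups by a ping--pong argument in the spirit of Navas's treatment of the half-line.

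\emph{Reductions.} For $p\in\mathrm{Fix}(G_1)$ let $\mathcal{G}_p$ denote the group of germs at $p$, for the action on $K$, of the elements of $G_1$. If an element $h\in G'_1$ has trivial germ at every $p\in\mathrm{Fix}(G_1)$, then the union of the open intervals witnessing these trivialities is a neighbourhood of $\mathrm{Fix}(G_1)$ which $h$ fixes pointwise; so it suffices to prove that every germ of $\mathcal{G}_p$ coming from $G'_1$ is trivial. Since $G'_1$ is generated by commutators and a finite product of germs each of which is trivial at $p$ is again trivial at $p$, it is enough to show that $\mathcal{G}_p$ is abelian for every $p\in\mathrm{Fix}(G_1)$. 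Two further simplifications will be used. First, by the chain rule and the fact that every $g\in G_1$ fixes every $q\in\mathrm{Fix}(G_1)$, the map $g\mapsto(g'(q))_{q\in\mathrm{Fix}(G_1)}$ is a homomorphism from $G_1$ into the abelian group $((0,+\infty),\times)^{\mathrm{Fix}(G_1)}$; hence every element of $G'_1$ has derivative $1$ at each point of $\mathrm{Fix}(G_1)$ --- this is the quantitative input we shall exploit. Second, near $p$ each $g\in G_1$ coincides on $K$ with a $C^{1+Lip}$ diffeomorphism of an interval about $p$ which, having positive derivative at $p$ by Proposition \ref{minpoint}, is orientation-preserving there; so one may treat the part of $K$ to the left of $p$ and the part to its right independently, and if $p$ is isolated in $K$ there is nothing to prove.

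\emph{The ping--pong strategy.} Fix $p\in\mathrm{Fix}(G_1)$ and suppose, for a contradiction, that $\mathcal{G}_p$ is not abelian: there are $f,g\in G_1$ whose commutator $c=[f,g]$ has nontrivial germ at $p$ on $K$, that is, every neighbourhood $V$ of $p$ contains a point $x\in K$ with $c(x)\neq x$. Note $c\in G'_1$, so $c'(p)=1$, and since $c$ is $C^{1+Lip}$ near $p$ one has $|\log c'(x)|\le C\,|x-p|$ on a neighbourhood of $p$. The aim is to produce two elements of $G_1$ generating a free subsemigroup, contradicting the hypothesis via the positive ping--pong Lemma \ref{positivepingpong}.

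\emph{Building the table and conclusion.} Here one adapts Navas's construction: using the bounded-distortion estimate available for $C^{1+Lip}$ maps (summability of $\mathrm{Lip}(\log(\cdot)')$ along a nested family of intervals, the same input underlying Sacksteder's theorem and Proposition \ref{SackCantor}), one locates, arbitrarily close to $p$, an element $\phi$ of $G_1$ assembled from $f$ and $g$ together with a ``$K$-interval'' $J=[a,b]\cap K$ whose endpoints are fixed by $\phi$ and on which $\phi$ is a genuine contraction toward one endpoint, as well as an element $\psi$ of $G_1$ (a suitable conjugate or commutator involving $c$) which sends some point of $J$ out of $J$; replacing $\phi$ by a high power $\phi^N$ and combining with $\psi$ yields disjoint nonempty subsets $A,B$ of $K$ near $p$ and elements $h_1,h_2\in G_1$ with $h_1(A\cup B)\subset A$ and $h_2(A\cup B)\subset B$, so that $\langle h_1,h_2\rangle$ contains a free subsemigroup --- the desired contradiction. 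Thus $\mathcal{G}_p$ is abelian for every $p\in\mathrm{Fix}(G_1)$, and by the reductions above every element of $G'_1$ fixes a neighbourhood of $\mathrm{Fix}(G_1)$ pointwise.

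\emph{Main obstacle.} The crux is the last step: converting the mere nontriviality of $c$ near $p$ into a contracting element of $G_1$ with a nearby fixed point on $K$ --- here no hyperbolic fixed point is handed to us as in Sacksteder's setting, it must be manufactured --- and arranging disjoint ping--pong sets using only the order available from the local diffeomorphic extensions, since $G_1$ preserves no global order on $K$. This is exactly where the $C^{1+Lip}$ regularity is needed, to control distortion along the long compositions $\phi^{\pm N}\psi\,\phi^{\mp N}$, and also why, unlike Navas's half-line theorem, the argument does not descend to $C^{1+bv}$.
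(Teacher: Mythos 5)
Your reduction is fine as far as it goes: it suffices to show that, for every $p\in\mathrm{Fix}(G_{1})$ and for each side on which $p$ is accumulated by $K$, the one-sided germ group of $G_{1}$ at $p$ is abelian (so that elements of $G'_{1}$ have trivial germ), and the observation that elements of $G'_{1}$ have derivative $1$ along $\mathrm{Fix}(G_{1})$ is correct and relevant. But the proof stops exactly where the real work begins. The paragraph ``Building the table and conclusion'' asserts that from a commutator $c$ with nontrivial germ at $p$ one can ``locate, arbitrarily close to $p$, an element $\phi$ of $G_{1}$ \ldots\ on which $\phi$ is a genuine contraction toward one endpoint, as well as an element $\psi$ \ldots\ which sends some point of $J$ out of $J$,'' and then run ping--pong. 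That sentence is the theorem, not a step: nothing in the hypotheses hands you an element with a fixed point near $p$ and no other fixed points nearby (there is no hyperbolic point as in Sacksteder's setting, as you yourself note), and bounded distortion alone does not produce one. You even flag this as the ``main obstacle'' without resolving it, so the argument is a strategy outline rather than a proof.

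For comparison, the paper has to assemble several independent ingredients to manufacture exactly the objects you postulate. First, it builds a ``local minimal invariant set'' $K_{1}$ on one side of $p$ via Zorn's lemma and proves, through a H\"older-type argument with a bi-invariant Archimedean order (Lemmas \ref{Holder} and \ref{Archimedean}) together with the crossed-elements Lemma \ref{crossed}, that the germ group on $K_{1}$ is abelian; this yields both the accumulation of $\mathrm{Fix}(G'_{1})$ at $p$ (Proposition \ref{firststep}) and, crucially, Corollary \ref{nofixedpoint}, which is precisely the existence of your ``contraction'' $\phi$ with no fixed point on a one-sided neighbourhood. Second, for points isolated in $\mathrm{Fix}(G_{1})$ it first proves only that the germ group is \emph{metabelian} (Proposition \ref{outsideminweak}), using the $C^{1+Lip}$ distortion estimate (Lemma \ref{distortion}) and a free-subsemigroup criterion (Lemma \ref{semifree}) whose proof is a word-evaluation argument, not a bare application of Lemma \ref{positivepingpong}; abelianness then requires Rosenblatt's theorem (metabelian without free subsemigroups implies nilpotent), a center element, and a Kopell-type commuting lemma (Lemma \ref{Kopell}) adapted to this setting. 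Third, points accumulated by $\mathrm{Fix}(G_{1})$ need a separate argument (Proposition \ref{outsideminacc}). None of these steps -- the local minimal set, the order-theoretic H\"older argument, Kopell's lemma, Rosenblatt's theorem, or the actual distortion computation -- appears in your proposal, and without them the passage from ``$\mathcal{G}_{p}$ nonabelian'' to ``free subsemigroup'' is unsupported.
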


In this proposition, the neighbourhood can a priori depend on the chosen element of the group $G'_{1}$.

We split the proof of Proposition \ref{accumulation} into two steps.
\begin{enumerate}
\item We first prove that any point of $\mathrm{Fix}(G_{1})$ is accumulated by points of $\mathrm{Fix}(G'_{1})$.
\item Then we use this first step to prove Proposition \ref{accumulation}.
\end{enumerate}

Throughout the proof of the proposition, we will use the two following definitions.

\begin{definition}
Let $I$ be an interval of $\mathbb{R} \supset K$ and $f$ be an element of $\mathfrak{diff}^{1+Lip}(K)$. We say that
\begin{itemize}
\item $f$ is \emph{monotonous} on $I$ if there exists a $C^{1+Lip}$-diffeomorphism $\tilde{f}: I \rightarrow \tilde{f}(I)$ such that $f_{|I \cap K}=\tilde{f}_{I \cap K}$.
\item $f$ is \emph{increasing} on $I$ if there exists an orientation-preserving (\emph{i.e.} increasing) $C^{1+Lip}$-diffeomorphism $\tilde{f}: I \rightarrow \tilde{f}(I)$ such that $f_{|I \cap K}=\tilde{f}_{I \cap K}$.
\end{itemize}
\end{definition}

Let $A$ be a subset of $\mathbb{R}$ and $p$ be a point of $A$. We call \emph{left-neighbourhood} (respectively \emph{right-neighbourhood}) of the point $p$ in $A$ any subset of $\mathbb{R}$ which contains a set of the form $[p-\alpha,p) \cap A$ (resp. $(p,p+\alpha] \cap A$), for some $\alpha >0$. 

We say that the point $p$ is \emph{accumulated on the left} (respectively \emph{accumulated on the right}) by the set $A$ if, for any $\alpha>0$, $[p-\alpha,p) \cap A \neq \emptyset$ (resp. $(p,p+\alpha] \cap A \neq \emptyset$. Equivalently, a point $p$ is accumulated on the left (resp. right) by the set $A$ if any left-neighbourhood (resp. right-neighbourhood) of the point $p$ is nonempty. 

We say that the point $p$ is isolated on the left (resp. on the right) in $A$ if it is not accumulated on the left (resp. on the right) by the set $A$ or, equivalently, if it has a left-neighbourhood (resp. right-neighbourhood) in $A$ which is empty.

\subsubsection{First step}

We formulate this first step as a proposition.

\begin{proposition} \label{firststep}
Let $p$ be a point of $\mathrm{Fix}(G_{1})$.

If the point $p$ is accumulated on the left (respectively on the right) by points of $K$, then the point $p$ is accumulated on the left (resp. on the right) by points of $\mathrm{Fix}(G'_{1})$.
\end{proposition}

\begin{proof}[Proof of Proposition \ref{firststep}]
Observe that it suffices to prove the following property.
Any point of $\mathrm{Fix}(G_{1})$ which is isolated on the left (respectively on the right) in $\mathrm{Fix}(G_{1})$ but not in $K$ is accumulated on the left (resp. on the right) by points of $\mathrm{Fix}(G'_{1})$.
As the proof of the property for points which are isolated on the right is similar, we will prove the property only for points which are isolated on the left. Hence let $p$ be a point of $\mathrm{Fix}(G_{1})$ which is isolated on the left in $\mathrm{Fix}(G_{1})$ but not in $K$. 

To start the proof, we will define a ``local minimal invariant subset'' $K_{1}$ in a left-neighbourhood of $p$. The subset $K_{1}$ accumulates to $p$ and we will see later that any element of $G'_{1}$ fixes the points of $K_{1}$ in a left-neighbourhood of $p$.

As $G_{1}$ is a finite index subgroup of $G$ and the group $G$ is finitely generated, then the subgroup $G_{1}$ is finitely generated. Fix a finite symmetric generating set $S$ of $G_{1}$ and $\alpha_{0} >0$ small enough so that $\mathrm{Fix}(G_{1}) \cap [p-\alpha_{0},p)= \emptyset$ and, for any element $s$ of $S$, the diffeomorphism $s$ is monotonous on $[p-\alpha_{0},p]$. Then any element of $S$ is increasing on $[p-\alpha_{0},p]$ as, by the choice of $G_1$, any element of $G_{1}$ has a positive derivative at the point $p$.

Consider the set $\mathcal{M}$ of closed nonempty subsets $A$ of $(p-\alpha_{0},p)\cap K$ such that, for any element $s$ of the generating set $S$
$$ s(A) \cap (p-\alpha_{0},p) \subset A.$$
This last property is an analogue of a ``local invariance'' property. Of course, the set $\mathcal{M}$ is nonempty as the subset $(p-\alpha_{0},p) \cap K$ belongs to $\mathcal{M}$. 

Take a point $p_{1}$ in $(p-\alpha_{0},p) \cap K$. Let 
$$ p_{2}= \max \left\{ s(p_{1}), \ s \in S \right\}.$$
Observe that, necessarily, $p_{2}>p_{1}$: otherwise, the point $p_{2}$ would be fixed under any element of $S$ hence any element of $G_{1}$, in contradiction with the definition of $\alpha_{0}$. To define the subset $K_{1}$, we need the following lemma.

\begin{lemma} \label{inductive}
For any $A$ in $\mathcal{M}$,
$$ A \cap [p_{1},p_{2}] \neq \emptyset$$
and the point $p$ is accumulated by points of $A$. 
\end{lemma}

Before proving Lemma \ref{inductive}, let us see how to construct the subset $K_{1}$. 

The set $\mathcal{M}$ is partially ordered by the set inclusion relation. Moreover, by compactness and Lemma \ref{inductive}, for any totally ordered family $(A_{i})_{i \in I}$ of elements of $\mathcal{M}$, the set
$$ \bigcap_{i \in I} A_{i}$$
is nonempty and belongs to $\mathcal{M}$: this set is a lower bound for this totally ordered family. By Zorn's lemma, the set $\mathcal{M}$ contains a minimal element for the inclusion relation. We denote by $K_{1}$ this element of $\mathcal{M}$. We can see it as a minimal invariant set for the left-germ of $G_{1}$ at $p$.

Now, let us prove Lemma \ref{inductive}.

\begin{proof}[Proof of Lemma \ref{inductive}]
Denote by $p'$ a point in $A \cap (p-\alpha_{0},p)$.

If the point $p'$ belongs to $[p_{1},p_{2}]$, there is nothing to prove.

Suppose now that the point $p'$ belongs to the interval $(p_{2},p)$. Denote by $\mathcal{B}$ the set of elements $g$ of $G_{1}$   with the following property. There exists a family $(s_{i})_{1 \leq i \leq p}$ of elements of $S$ such that $g=s_{1}s_{2} \ldots s_{p}$ and 
$$ \forall 1 \leq i \leq p, \  s_{i}s_{i+1} \ldots s_{p}([p',p] \cap K) \subset (p-\alpha_{0},p].$$
Now let $B$ be the subset of $K \cap (p-\alpha_{0},p)$ defined by 
$$B= \left\{ b(p'), b \in \mathcal{B} \right\}.$$
Observe that, as the subset $A$ belongs to $\mathcal{M}$, the subset $B$ is contained in $A$. 

Let $m=\inf(B \cap [p_{1},p])$. It suffices to prove that the element $m$ belongs to $[p_{1},p_{2})$. We will do it by contradiction. Suppose that $p_{2} \leq m$. Then, as the point $m$ does not belong to $\mathrm{Fix}(G_{1})$, there exists an element $s$ of $S$ such that $s(m) < m$ (recall that the set $S$ is symmetric). As $p_{2} \leq m$ and by construction of the point $p_{2}$, we have $p_{1} \leq s(m)$ and $p-\alpha_{0} < s(m)$. Hence, if $x$ is a point of $B$ close to $m$, then $s(x)$ is also a point of $B$ which belongs to $[s(m),m)$, in contradiction with the definition of $m$.

If $p-\alpha_{0} < p'<p_{1}$, the proof is analogous. Namely look at the same set $B$ but look at the supremum of $B \cap [p_{1},p_{2}]$ instead of the infimum and prove that it belongs to $(p_{1},p_{2}]$.

Observe that we can take the point $p_{1}$ as close as we want to the point $p$. Hence the point $p$ is accumulated by points of $A$.
\end{proof}

Let $H$ be any subgroup of $\mathfrak{diff}^{1}(K)$, $F$ be a closed subset of $K$ and $q$ be a point of $\mathrm{Fix}(H)$ which is accumulated on the left by $F$ such that any element of $H$ has a positive derivative at $q$. Define
$$H_{F,q,-}= \left\{ g \in H \ | \ \exists \alpha>0, \ g_{F\cap [q-\alpha,q]}= Id_{F\cap [q-\alpha,q]} \right\}.$$
Observe that $H_{F,q,-}$ is a normal subgroup of $H$. We denote by $\mathcal{H}(F,q)$ the group $H /  H_{F,q,-}$. This is the ``group of left-germs at $q$ of elements of $H$ restricted to $F$''. In particular we set $\mathcal{G}_{1}(K_{1},p)=G_{1} /  G_{1 \ K_{1},p,-}$. This is the ``group of left-germs at $p$ of elements of $G_{1}$ restricted to $K_{1}$''.

The following proposition completes the proof of the first step. \end{proof}

\begin{proposition} \label{minabelian}
\begin{enumerate}
\item Any element of $G_{1}$ either fixes all the points of $K_{1}$ or has no fixed point on a left-neighbourhood of $p$ (which depends on the element of $G_{1}$).
\item The group $\mathcal{G}_{1}(K_{1},p)$ is abelian.
\end{enumerate}
\end{proposition}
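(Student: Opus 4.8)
The plan is to reduce both assertions to a single ``crossing produces a free subsemigroup'' mechanism, in the spirit of Navas \cite{Nav2}. Call two elements $f_{1},f_{2}\in G_{1}$, both increasing on a left-neighbourhood of $p$ and fixing $p$, a \emph{crossed pair near $p$} if there is an interval $[a,b]$ inside that neighbourhood with $K_{1}\cap(a,b)\neq\emptyset$ such that (after relabelling) a $C^{1+Lip}$ extension of $f_{1}$ maps $[a,b]$ into itself and contracts it monotonically onto one endpoint, while $f_{2}$ carries some point of $K_{1}\cap(a,b)$ lying arbitrarily close to the \emph{other} endpoint to a point of $K_{1}$ outside $[a,b]$. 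From such a configuration one produces, exactly as in the proof of Lemma \ref{minimalfinite}, two disjoint intervals $A,B$ and appropriate maps of the form $(f_{1}^{m})_{|K_{1}}$ and $(f_{2}f_{1}^{m'})_{|K_{1}}$ meeting the hypotheses of the positive ping-pong Lemma \ref{positivepingpong}, contradicting the hypothesis on $G$. Making this precise requires controlling the distortion of the iterates $f_{1}^{m}$ on $[a,b]$ so that the images of $A$ and $B$ stay where they must; this is the only place where the $C^{1+Lip}$ hypothesis (equivalently, $C^{1+bv}$ on compact intervals) is used essentially, through the Kopell--Sacksteder estimate that the iterates of a $C^{1+Lip}$ contraction towards a fixed point have uniformly bounded distortion on a fundamental domain. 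I expect this distortion bookkeeping --- in particular in the parabolic case, where the contraction rate is subgeometric --- to be the main technical obstacle.

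For part (2), let $g\in G_{1}$ not fix $K_{1}$ pointwise. Using the minimality of $K_{1}$ (every local $S$-orbit is dense in $K_{1}$) one first observes that $g$ moves points of $K_{1}$ arbitrarily close to $p$: otherwise $g$ would fix a nonempty relatively open, hence by minimality dense, and so all, of $K_{1}$. Suppose, for a contradiction, that $g$ has fixed points in $K$ accumulating at $p$ from the left. Passing to a genuine increasing $C^{1+Lip}$ extension $\tilde{g}$ near $p$, pick a maximal open interval $(a,b)$ --- with $b$ as close to $p$ as we wish --- on which $\tilde{g}$ has no fixed point and which meets $K_{1}$; then $\tilde{g}$ contracts $(a,b)$ monotonically onto one endpoint, say $a$. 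By minimality there is $h\in G_{1}$ (a local word in $S$) carrying a point of $K_{1}\cap(a,b)$ close to $b$ to a point of $K_{1}$ lying between $b$ and $p$. Then $(g,h)$ is a crossed pair near $p$, a contradiction; hence $g$ has no fixed point on a left-neighbourhood of $p$, which is part (2).

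For part (1), I would use part (2) to make $\mathcal{G}_{1}(K_{1})$ into an ordered group. By part (2) each $g\in G_{1}$ satisfies exactly one of: $g$ fixes $K_{1}$ pointwise; or $g(x)>x$ for all $x\in K_{1}$ in some left-neighbourhood of $p$; or $g(x)<x$ for all such $x$ --- exhaustiveness following from part (2) and the intermediate value theorem for the increasing extension of $g$. This trichotomy descends to $\mathcal{G}_{1}(K_{1})$, and, declaring $[g]>1$ in the second case, one checks that $\{[g]>1\}$ is a sub-semigroup stable under conjugation, giving a bi-invariant total order. This order is archimedean: for $[f]>1$ the sequence $f^{n}(x)$, $x\in K_{1}$ near $p$, is increasing, stays in $K_{1}$, is bounded above by $p$, and its limit is fixed by $f$ hence equals $p$ by part (2); so no element can dominate all powers of $[f]$, the only possible obstruction being a non-archimedean pair (prototypically a parabolic $[f]$, with $f'(p)=1$, dominated by a hyperbolically contracting $[h]$, with $h'(p)<1$), which is precisely a Kopell--Navas configuration and yields, by Navas' argument \cite{Nav2} transplanted to the minimal Cantor set $K_{1}$ via the distortion estimates above, a free subsemigroup in $G$, a contradiction. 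By H\"older's theorem an archimedean ordered group embeds into $(\mathbb{R},+)$; therefore $\mathcal{G}_{1}(K_{1})$ is abelian, which is part (1).

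Finally, parts (1) and (2) combine as in the text to complete the first step: since $\mathcal{G}_{1}(K_{1})$ is abelian, every element of $G'_{1}$ (a finite product of commutators) fixes $K_{1}$ on some left-neighbourhood of $p$, hence --- having fixed points there --- fixes all of $K_{1}$ by part (2); as $K_{1}$ accumulates at $p$ from the left by Lemma \ref{inductive}, the point $p$ is accumulated on the left by $\mathrm{Fix}(G'_{1})$, which is Proposition \ref{firststep}.
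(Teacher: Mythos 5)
Your overall strategy (minimality produces a crossing, a crossing produces a free subsemigroup by ping-pong, and a H\"older-type order argument gives abelianness) is the paper's strategy, but two steps on which everything rests are not established. First, your notion of ``crossed pair'' is too weak to run the ping-pong. In your argument for part (2), minimality only provides an element $h$ sending \emph{one interior point} of $K_{1}\cap(a,b)$ near $b$ to a point of $K_{1}$ above $b$; nothing prevents $h$ from mapping all of $[a,b]\cap K$ above $b$ (this is exactly what a word in $S$ pushing points towards $p$ typically does). In that case $g$ and $h$ are not crossed in the sense of the paper's definition (which requires $h$ to send one of the fixed endpoints $p_{1},p_{2}$ of $g$ strictly inside $(p_{1},p_{2})$), the image $h([a,b]\cap K)$ leaves the region where the dynamics of $g$ is controlled, and no choice of sets for Lemma \ref{positivepingpong} is available; distortion control on the iterates of $g$ cannot repair this, since the missing control is on $h$. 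Indeed, excluding precisely this ``pushed completely across'' scenario is the content of the paper's Lemma \ref{distortion}, and there it requires a genuine $C^{1+Lip}$ estimate \emph{together with} $h'(p)=1$, which holds because that $h$ lies in $G'_{1}$ --- an input you do not have for the $h$ produced by minimality. The paper avoids the problem by crossing at the endpoint: the contradiction hypothesis supplies a fixed point of $g$ lying in $K_{1}$, so the endpoint $p_{1}$ of the fixed-point-free gap can be taken in $K_{1}$, and density of the local orbit of $p_{1}$ (Lemma \ref{crossing}) yields $h$ with $p_{1}<h(p_{1})<p_{2}$; with this genuine crossing, Lemma \ref{crossed} is soft ping-pong needing no distortion at all, so your expectation that the $C^{1+Lip}$ hypothesis enters at this point is also misplaced. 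Note also that in your setup the endpoints $a,b$ of a maximal fixed-point-free interval of $\tilde{g}$ need not lie in $K$, so the endpoint trick is not available as you arranged things; and your preliminary claim that a nonempty relatively open subset of the minimal set $K_{1}$ is dense is false (minimality gives dense local orbits, not dense open sets; the fixed-point set of $g$ in $K_{1}$ is not locally invariant), so the implication ``$g$ fixes $K_{1}$ near $p$ hence all of $K_{1}$'' is part of what must be proved, not a remark.

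Second, in part (1) your order is built only from the trichotomy of part (2) (``no fixed point near $p$''), and you assert the Archimedean property from $f^{n}(x)\to p$. This is exactly the ``weaker and more natural hypothesis'' that the paper explicitly warns is insufficient in the remark before Lemma \ref{Holder}: knowing $f^{k}(x_{0})>g(x_{0})$ at one point does not give $f^{k}(x)\geq g(x)$ on a whole left-neighbourhood of $p$ in $K_{1}$, because of the different possible speeds of convergence to $p$. The paper's route is to prove the stronger tail-fixing property (an element increasing near $p$ which fixes one point $x\in K_{1}$ fixes all of $[x,p]\cap K_{1}$ --- this is what the crossing argument actually establishes) and only then apply Lemma \ref{Holder}, where that property converts a comparison at one point into a comparison of germs. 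Your substitute --- declaring that a non-Archimedean pair is ``a Kopell--Navas configuration'' yielding a free subsemigroup ``via the distortion estimates above'' --- is the missing argument rather than a reduction to one: the Kopell-type Lemma \ref{Kopell} is proved and used only later (Proposition \ref{outsidemin}), under hypotheses (commutation on a fixed-point-free interval) that you have not produced here. So both parts of your proposal hinge on steps that would fail as written; the decomposition into the two cases ($K_{1}$ discrete or perfect), the endpoint crossing, and the strengthened H\"older hypothesis are the ideas you are missing from the paper's proof.
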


\begin{remark} Actually, the first statement of this proposition implies the second one, as we will see during the proof of this proposition. \end{remark}

We obtain immediately the following corollary which will be useful later.

\begin{corollary} \label{nofixedpoint} 
Let $p$ be a point of $\mathrm{Fix}(G_{1})$ which is isolated on the left in $\mathrm{Fix}(G_{1})$ but accumulated on the left by $K$. Then there exists an element $h$ of $G_{1}$ and $\alpha >0$ such that
\begin{enumerate}
\item the diffeomorphism $h$ is increasing on $[p-\alpha,p]$.
\item $\forall x \in [p-\alpha,p] \cap K, h(x)>x$.
\end{enumerate}
\end{corollary}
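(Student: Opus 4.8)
The plan is to take for $h$ one of the generators of $G_1$, reusing the objects built in the proof of Proposition \ref{firststep} for the point $p$: the length $\alpha_0>0$ (chosen so that $\mathrm{Fix}(G_1)\cap[p-\alpha_0,p)=\emptyset$ and every element of the finite symmetric generating set $S$ of $G_1$ is increasing on $[p-\alpha_0,p]$), and the minimal locally invariant set $K_1\subset(p-\alpha_0,p)\cap K$, which by Lemma \ref{inductive} accumulates at $p$ on the left. First I would produce a generator $s_0\in S$ that does not fix $K_1$ pointwise: indeed $K_1$ is nonempty and $K_1\subset[p-\alpha_0,p)$ is disjoint from $\mathrm{Fix}(G_1)$, so $G_1=\langle S\rangle$ cannot fix $K_1$ pointwise. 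By Proposition \ref{minabelian}(2), this $s_0$ then has no fixed point on $K\cap[p-\beta_0,p)$ for some $\beta_0\in(0,\alpha_0]$; after further shrinking $\beta_0$ I may assume the increasing diffeomorphisms $\tilde s_0$ and $\tilde s_0^{-1}$ representing $s_0^{\pm1}$ on $[p-\alpha_0,p]$ are both defined on $[p-\beta_0,p]$, and of course $\tilde s_0(p)=p$.

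Next I would show that $s_0$ (or $s_0^{-1}$, which also lies in $S$ and does not fix $K_1$ pointwise) is strictly greater than the identity on a whole left-neighbourhood of $p$. Pick $q\in K_1\cap[p-\beta_0,p)$ (possible by Lemma \ref{inductive}); since $s_0$ has no fixed point there, $s_0(q)\neq q$, and replacing $s_0$ by $s_0^{-1}$ if necessary I may assume $\tilde s_0(q)>q$. Because $K_1\in\mathcal M$ we have $s_0(K_1)\cap(p-\alpha_0,p)\subset K_1$, so the forward orbit $(s_0^n(q))_{n\geq0}$ stays in $K_1\cap[q,p)$; being the orbit of $q$ under the strictly increasing $\tilde s_0$ with $\tilde s_0(q)>q$, it is increasing, hence converges to some $\ell\le p$ which is a fixed point of $\tilde s_0$ and lies in $K$ (limit of points of $K_1\subset K$). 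Since $\tilde s_0$ has no fixed point on $K\cap[p-\beta_0,p)$, necessarily $\ell=p$; and then monotonicity of $\tilde s_0$ shows that $\tilde s_0-\mathrm{id}$ has no zero in $[q,p)$ at all (none between consecutive orbit points, by monotonicity, and none at an orbit point, which lies in $K$), so being positive at $q$ it is positive on all of $[q,p)$: $\tilde s_0(x)>x$ for every $x\in[q,p)$.

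Finally I would rule out any $K$-point near $p$ moved to the left by $s_0$. If there were $y\in K\cap(q,p)$ with $s_0(y)<y$, i.e. $\tilde s_0^{-1}(y)>y$, then the forward orbit $(s_0^{-k}(y))_{k\ge0}$ stays in $K$ (since $s_0^{-1}$ is a homeomorphism of $K$) and, after the shrinking of $\beta_0$, in $[p-\beta_0,p)$; running the argument of the previous paragraph with $\tilde s_0^{-1}$ in place of $\tilde s_0$ gives $\tilde s_0^{-1}(x)>x$, i.e. $\tilde s_0(x)<x$, for all $x\in[y,p)$, contradicting the previous paragraph since $[y,p)\subset[q,p)$. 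Hence there is $\alpha\in(0,\beta_0]$ with $s_0(x)>x$ for every $x\in[p-\alpha,p)\cap K$ (one has $s_0(p)=p$ as $p\in\mathrm{Fix}(G_1)$), while $s_0$ is increasing on $[p-\alpha,p]\subset[p-\alpha_0,p]$; taking $h=s_0$ yields the corollary.

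The main obstacle is the middle paragraph: the hypothesis from Proposition \ref{minabelian}(2) only says $s_0$ has no fixed point on $K$ near $p$, which alone does not prevent $\tilde s_0-\mathrm{id}$ from changing sign, since its zeros could all lie in the complement of $K$. The key is to exploit that $K_1$ is a minimal set for the local action which accumulates at $p$: propagating the single inequality $\tilde s_0(q)>q$ along the $K_1$-orbit of $q$ forces $\tilde s_0$ to have no fixed point whatsoever in $[q,p)$, which is exactly what pins down the sign of $\tilde s_0-\mathrm{id}$ there.
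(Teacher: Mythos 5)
Your proposal is correct and follows essentially the same route as the paper: the paper deduces the corollary directly from Proposition \ref{minabelian}(2) together with the setup of the proof of Proposition \ref{firststep} (the set $K_{1}$, the generating set $S$ and $\alpha_{0}$), which is exactly what you do, merely spelling out the details the authors treat as immediate (producing a generator that does not fix $K_{1}$ pointwise, and the orbit/monotonicity argument pinning down the sign of $\tilde{s}_{0}-\mathrm{id}$ on a whole left-neighbourhood despite possible zeros in the gaps of $K$). Note only that your third paragraph is redundant, since the second already gives $\tilde{s}_{0}(x)>x$ for every real $x\in[q,p)$, hence for every point of $K\cap[q,p)$.
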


Of course, we have analogous statements for points which are isolated on the right in $\mathrm{Fix}(G_{1})$.

To prove Proposition \ref{minabelian}, we need the following lemma.

\begin{lemma} \label{Holder}
Let $H$ be a subgroup of $\mathfrak{diff}^{1}(K)$, $F$ be a closed subset of $K$ and $q$ be a point of $\mathrm{Fix}(H)$ which is accumulated on the left by $F$ such that any element of $H$ has a positive derivative at $q$..
Suppose that there exists $\alpha_{0}>0$ such that, for any diffeomorphism $h$ in $H$, the following property holds.

For any element $h$ of the group $H$ and any $\alpha \in (0,\alpha_{0})$, if the diffeomorphism $h$ is increasing on $[q-\alpha,q]$ and if the diffeomorphism $h$ has a fixed point $x$ in $[q-\alpha,q] \cap F$ then the diffeomorphism $h$ pointwise fixes $[x,q] \cap F$.

Then the group $\mathcal{H}(F,q)$ is abelian.
\end{lemma}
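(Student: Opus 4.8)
The plan is to show that for any two elements $h_1, h_2$ of $H$, the commutator $[h_1,h_2]$ acts trivially on a left-neighbourhood of $q$ in $F$, i.e. $[h_1,h_2] \equiv_{F,q,-} 1$. Since $\mathcal{H}(F)$ is generated by the germs of elements of $H$, this is exactly the statement that $\mathcal{H}(F)$ is abelian. First I would reduce to germs of \emph{increasing} elements: by hypothesis every $h \in H$ fixes $q$, and for $\alpha$ small enough the germ at $q$ of $h$ is monotonous on $[q-\alpha,q]$; if $h$ is decreasing, then $h^2$ is increasing and has the same germ-theoretic effect on commutators up to bookkeeping (one can also note that the subgroup of germs of increasing elements has index at most $2$, and it suffices to treat that subgroup, since the conjugation action of a decreasing germ on an abelian group of increasing germs still yields an abelian overgroup only if... — actually the cleanest route is to work directly with the increasing case and handle signs at the end). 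So assume $h_1,h_2$ are increasing on some common interval $[q-\alpha_0,q]$.

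The core is a classical Kopell/Hölder-type argument. Set $g = [h_1,h_2] = h_1 h_2 h_1^{-1} h_2^{-1}$; it is increasing on $[q-\alpha,q]$ for $\alpha$ small, and $g(q) = q$. Suppose for contradiction that $g$ does not pointwise fix $F \cap [q-\alpha,q]$ for any $\alpha$. Then I would like to find a fixed point $x \in F \cap [q-\alpha,q]$ of $g$ with $x$ as close to $q$ as desired, such that $g$ moves some point of $F$ in $[x,q]$ — contradicting the hypothesis applied to $g$. To produce such a fixed point: since $q$ is accumulated on the left by $F$ and $g(q)=q$ with $g'(q) > 0$, pick $\alpha$ so small that $g$ is increasing on $[q-\alpha,q]$ with $\sup |\tilde g'| $ bounded; if $g$ has no fixed point in $F \cap (q-\alpha,q)$ then $g$ moves every such point monotonically towards (say) $q$, and one shows the orbit of a point accumulates only at $q$ — but then pick $y \in F$ with $y < g(y) < q$ (or the reversed inequality), and the interval $[y,q]$ contains a point of $F$ moved by $g$ while $q$ itself is a fixed point of $g$ in $F \cap [y',q]$ for $y'$ close to $q$; this is the desired contradiction with the hypothesis (with $x$ taken near $q$). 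If instead $g$ \emph{does} have a fixed point in $F$ arbitrarily close to $q$ but still moves some point of $F$ near $q$, the hypothesis applied at that fixed point already gives the contradiction directly.

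The real content — and the main obstacle — is showing $g = [h_1,h_2]$ is "small" near $q$ in the sense needed, which is where the $C^{1+\mathrm{Lip}}$ regularity enters: one uses a distortion estimate to control $\log g'$ on short intervals abutting $q$, so that $g$ cannot simultaneously have fixed points of $F$ accumulating at $q$ and move nearby points of $F$ without violating the hypothesis. Concretely I expect to invoke the standard fact that for $C^{1+\mathrm{Lip}}$ diffeomorphisms the "nonlinearity" $\int |\,(\log \tilde f')'\,|$ is subadditive under composition and controls how $\tilde f$ distorts ratios of lengths; feeding this into the picture above forces the orbit structure that produces the contradiction. Once the increasing case is done, signs are handled by replacing offending elements by their squares and noting this does not affect whether the quotient group is abelian (passing to the finite-index subgroup of increasing germs, which is normal of index $\le 2$, and checking the decreasing germ, if any, also commutes with all increasing germs modulo $\equiv_{F,q,-}$ by the same fixed-point argument applied to its conjugates). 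I would close by remarking that the right-neighbourhood version is identical with orientations reversed.
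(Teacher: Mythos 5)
There is a genuine gap at the central step. You reduce to showing that $g=[h_{1},h_{2}]$ pointwise fixes $F\cap[q-\alpha,q]$ for some $\alpha>0$, and you try to derive a contradiction from the assumption that it does not, by finding a fixed point $x$ of $g$ in $F$ close to $q$ together with a displaced point of $F$ in $[x,q]$. But in the problematic case where $g$ has \emph{no} fixed point in $F\cap(q-\alpha,q)$, the only fixed point available is $x=q$ itself (note $q\in F$ since $F$ is closed and accumulated by $F$), and the hypothesis applied at $x=q$ only says that $g$ fixes $[q,q]\cap F=\{q\}$, which is vacuous. So no contradiction arises: the hypothesis of the lemma does not by itself exclude a single commutator displacing every point of $F$ in a punctured left-neighbourhood of $q$; ruling that out is precisely the content of the lemma and cannot be extracted one commutator at a time in the way you propose. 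In addition, your appeal to $C^{1+Lip}$ distortion estimates is not available here: the lemma is stated for subgroups of $\mathfrak{diff}^{1}(K)$, with no control on the modulus of continuity of the derivative, so Kopell/Plante--Thurston-type nonlinearity bounds cannot be invoked (you are conflating this statement with Lemma \ref{Kopell}, where the stronger regularity genuinely enters); in fact no distortion estimate is needed for this lemma.

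The paper's proof takes a purely order-theoretic route that your proposal misses. One defines on $\mathcal{H}(F)=H/\equiv_{F,q,-}$ the relation $\xi\preceq\eta$ if and only if representatives satisfy $g_{\xi}(x)\leq g_{\eta}(x)$ for all $x\in F\cap(q-\alpha,q)$ with $\alpha$ small. Biinvariance follows from the representatives being increasing near $q$; totality follows from the hypothesis applied to $g_{\xi}g_{\eta}^{-1}$ (either it fixes a left-neighbourhood of $q$ in $F$ pointwise, or it displaces all points of $F$ there); the Archimedean property follows by comparing $g_{2}(x_{0})$ with $g_{1}^{k}(x_{0})$ for $k$ large and applying the hypothesis to $g_{1}^{k}g_{2}^{-1}$. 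Lemma \ref{Archimedean} (H\"older's theorem) then shows that $\mathcal{H}(F)$ embeds in $(\mathbb{R},+)$, hence is abelian. The hypothesis of the lemma is exactly what makes this order total and Archimedean; that is the key idea your argument would need and does not contain.
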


To prove this lemma, we use the same techniques as in the proof of a famous theorem by H\"older, which states that any group of fixed-point-free homeomorphisms of the real line is abelian. We need the following definition for this proof.

Observe that we could have thought of a weaker and more natural hypothesis for Lemma \ref{Holder}: any element of $\mathcal{H}(F,q)$ has a representative with no fixed point in $V \setminus \left\{ q \right\}$, where $V$ is a left-neighbourhood of $q$ in $F$. However, our proof does not work with this weaker hypothesis. This is due to the various speeds of convergence of the orbits to the point $q$ that can exist. The hypothesis of Lemma \ref{Holder} that we took avoids this problem.

\begin{definition}
Let $H$ be a group and $\preceq$ be an order on $H$. The order $\preceq$ is called
\begin{enumerate}
\item \emph{total} if, for any elements $h_{1}$ and $h_{2}$ of $H$, either $h_{1} \preceq h_{2}$ or $h_{2} \preceq h_{1}$.
\item \emph{biinvariant} if, for any elements $h_{1}$, $h_{2}$ and $h_{3}$ of $H$,
$$ h_{1} \preceq h_{2} \Rightarrow h_{3}h_{1} \preceq h_{3}h_{2}$$
and
$$ h_{1} \preceq h_{2} \Rightarrow h_{1}h_{3} \preceq h_{2}h_{3}.$$
\item \emph{Archimedean} if, for any element $h_{1}$ and $h_{2}$ of $H$ with $1 \prec h_{2}$ (\emph{i.e.} $1 \preceq h_{2}$ and $h_{2} \neq 1$), there exists an integer $n \geq 0$ such that $h_{1} \preceq h_{2}^{n}$. 
\end{enumerate}
\end{definition}

The main idea of the proof of Lemma \ref{Holder} is to apply the following lemma (see \cite[Proposition 2.2.29]{Nav} for a proof of this lemma).

\begin{lemma} \label{Archimedean}
Any group which admits a bi-invariant total Archimedean order is a subgroup of $(\mathbb{R},+)$.
\end{lemma}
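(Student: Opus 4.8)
The plan is to realize the abstract order-embedding as a \emph{translation number} map, exactly as in the classical proof of H\"older's theorem. Write the group as $G$, its identity as $1$, and its order as $\preceq$. If $G=\{1\}$ there is nothing to prove, so assume $G$ is nontrivial and fix once and for all a \emph{unit} $u\succ 1$. The observation used throughout is that bi-invariance lets one multiply inequalities: $x\preceq y$ and $x'\preceq y'$ imply $xx'\preceq yy'$, and consequently $x\preceq y\Rightarrow x^{n}\preceq y^{n}$ (with strict versions for strict inequalities) and conjugation $x\mapsto wxw^{-1}$ is order-preserving. The Archimedean hypothesis then yields a \emph{division algorithm}: for every $x\in G$ there is a unique integer $D(x)$ with $u^{D(x)}\preceq x\prec u^{D(x)+1}$.

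First I would construct the map. Multiplying the defining inequalities for $x$ and $y$ gives the cocycle bound $D(x)+D(y)\le D(xy)\le D(x)+D(y)+1$. Applying this to $x=f^{p}$, $y=f^{q}$ (which commute, being powers of $f$) shows that the sequence $\big(D(f^{q})\big)_q$ is superadditive up to an error $1$, so by Fekete's lemma the limit
$$\tau(f):=\lim_{q\to\infty}\frac{D(f^{q})}{q}$$
exists and is finite. Routine checks, relying only on bi-invariance and the Archimedean property, then give $\tau(u)=1$, $\tau(f^{k})=k\,\tau(f)$, monotonicity ($f\preceq g\Rightarrow\tau(f)\le\tau(g)$), and $\tau(f)>0$ exactly when $f\succ 1$ (if $f\succ 1$ then $f^{m}\succeq u$ for some $m$ by Archimedeanness, so $m\,\tau(f)=\tau(f^{m})\ge\tau(u)=1$). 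In particular $\tau(f)=0$ iff $f=1$. The same cocycle bound shows $\tau$ is a homogeneous quasimorphism, $|\tau(fg)-\tau(f)-\tau(g)|\le 2$.

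The heart of the matter, and the step I expect to be the main obstacle, is to upgrade $\tau$ from a quasimorphism to an honest homomorphism, i.e.\ to show the defect is actually $0$. This is equivalent to proving that $G$ is abelian: once $G$ is commutative, $(fg)^{q}=f^{q}g^{q}$ and the cocycle bound applied to $f^{q},g^{q}$ gives $|D((fg)^{q})-D(f^{q})-D(g^{q})|\le 1$, whence $\tau(fg)=\tau(f)+\tau(g)$ after dividing by $q$. Commutativity is exactly where the Archimedean hypothesis must enter essentially. The mechanism I would exploit is that conjugation cannot change ``scale'': for positive $a,b$, conjugating the division-algorithm inequality for $b$ by powers of $a$ (which fix the powers of $a$) traps the conjugate $c=b^{-1}ab$ at every scale into the same interval of powers of $a$ as $a$ itself, namely $a^{j-1}\prec c^{j}\prec a^{j+1}$ for all $j\ge 1$, so $\tau(c)=\tau(a)$. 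One must then rule out two distinct elements having equal translation number: if $c\neq a$ the elements $a^{-j}c^{j}$ would stay trapped in the bounded region $(1,a)$ for all $j$ while being driven monotonically by a genuinely nontrivial displacement, which the Archimedean property forbids. Equivalently, and perhaps more cleanly, one passes to the dynamical realization, letting $G$ act on the Dedekind completion of $(G,\preceq)$ by left translations, where the Archimedean condition forces the action to be free and hence conjugate to a group of honest translations of the line, making $G$ abelian and $\tau$ injective at once.

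Finally I would assemble the pieces. Granting commutativity, $\tau\colon G\to(\mathbb{R},+)$ is a homomorphism; it is monotone, and since $\tau(f)=0$ only for $f=1$ its kernel is trivial, so $\tau$ is an injective order-preserving homomorphism. Hence $G$ is isomorphic, as an ordered group, to the subgroup $\tau(G)$ of $(\mathbb{R},+)$, which is the assertion. The only genuinely delicate input is the commutativity/injectivity step of the preceding paragraph; everything else is bookkeeping from bi-invariance and the division algorithm.
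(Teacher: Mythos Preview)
The paper does not prove this lemma; it simply cites Navas's book (Proposition 2.2.29) and moves on. Your outline is the classical proof of H\"older's theorem, and its architecture is correct: define the translation number $\tau$ via the division algorithm with respect to a fixed unit $u\succ 1$, check that $\tau$ is a homogeneous conjugation-invariant quasimorphism with $\tau(g)=0\iff g=1$, and then upgrade $\tau$ to an injective homomorphism by first proving that $G$ is abelian. You correctly flag the abelian step as the only delicate one.

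Your first argument for commutativity, however, has a genuine gap. With $c=b^{-1}ab$ you get $\tau(c)=\tau(a)$ for free (conjugation invariance), and you can indeed bound $a^{-j}c^{j}$ uniformly: taking $a$ itself as the unit, one finds $a^{-C}\preceq a^{-j}c^{j}\preceq a^{C}$ for a fixed $C$. But the assertion that $a^{-j}c^{j}$ is \emph{monotone} in $j$, or that successive gaps are bounded below by a fixed positive element, tacitly uses the commutativity you are trying to prove. Writing $x_{j}=a^{-j}c^{j}$, the recursion $x_{j+1}=a^{-1}x_{j}c$ gives $x_{j+1}\succ x_{j}$ iff $c\succ x_{j}^{-1}ax_{j}$, a comparison between two elements with the same $\tau$-value that cannot be decided from $\tau$ alone. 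And mere boundedness contradicts nothing Archimedean (think of $1-2^{-j}$ in $(\mathbb{R},+)$); you would need a uniform positive lower bound on the increments, which is exactly what fails without commutativity. Your alternative via the dynamical realization is valid---bi-invariance makes the left action of $G$ on its own order (and its Dedekind completion) free, since $g\succ 1$ implies $gh\succ h$ for all $h$, and one then invokes the H\"older theorem for free order-preserving actions---but note that this imports essentially the same theorem in dynamical dress. That is precisely the route taken in the reference the paper cites, so in that sense your second approach is aligned with the paper's (outsourced) proof.
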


\begin{proof}[Proof of Lemma \ref{Holder}]
We define an order $\preceq$ on the group $\mathcal{H}(F,q)$ in the following way. For any elements $\xi$ and $\eta$ which are respectively represented by elements $g_{\xi}$ and $g_{\eta}$ of $H$, we have $\xi \preceq \eta$ if and only if there exists $\alpha >0$ such that, for any point $x$ of $F \cap (q-\alpha,q)$, we have $g_{\xi}(x) \leq g_{\eta}(x)$. We will prove that this defines a biinvariant total Archimedean order on $\mathcal{H}(F,q)$. By Lemma \ref{Archimedean}, this implies that the group $\mathcal{H}(F,q)$ is abelian and proves Lemma \ref{Holder}.

Let $\xi_{1}$, $\xi_{2}$ and $\xi_{3}$ be elements of $\mathcal{H}(F,q)$ with $\xi_{1} \preceq \xi_{2}$. Let us prove that $\xi_{1} \xi_{3} \preceq \xi_{2} \xi_{3}$ and that $\xi_{3} \xi_{1} \preceq \xi_{3} \xi_{1}$.

Take elements $g_{1}$, $g_{2}$ and $g_{3}$ of the group $H$ which respectively represent the elements $\xi_{1}$, $\xi_{2}$ and $\xi_{3}$. Take $\alpha >0$ small enough such that
\begin{enumerate}
\item The diffeomorphisms $g_{1}$, $g_{2}$ and $g_{3}$ are increasing on $[q-\alpha,q]$.
\item For any point $x$ in $[q-\alpha,q) \cap F$, $g_{1}(x) \leq g_{2}(x)$.
\end{enumerate} 
Take $\alpha' >0$ small enough so that $g_{3}([q-\alpha',q] \cap F) \subset [q-\alpha,q] \cap F$. Then, for any point $x$ in $[q-\alpha',q] \cap F$,
$$g_{1}(g_{3}(x)) \leq g_{2}(g_{3}(x))$$
and $\xi_{1} \xi_{3} \preceq \xi_{2} \xi_{3}$.
Now, take $\alpha''>0$ small enough such that  $g_{1}([q-\alpha'',q] \cap F) \subset [q-\alpha,q] \cap F$ and  $g_{2}([q-\alpha'',q] \cap F) \subset [q-\alpha,q] \cap F$. Then, as the diffeomorphism $g_{3}$ is increasing on $[q-\alpha,q]$, for any point $x$ of $[q-\alpha'',q] \cap F$,
$$ g_{3}(g_{1}(x)) \leq g_{3}(g_{2}(x))$$
and $\xi_{3} \xi_{1} \preceq \xi_{3} \xi_{2}$.

We have just proved that the order $\preceq$ is biinvariant. Let us explain why this order is total. Let $\xi$ and $\eta$ be elements of $\mathcal{H}(F,q)$ which are respectively represented by elements $g_{\xi}$ and $g_{\eta}$ of $H$. By hypothesis of Lemma \ref{Holder}, either the diffeomorphism $g_{\xi} \circ g_{\eta}^{-1}$ is equal to the identity on $F \cap (q-\alpha,q)$ for $\alpha>0$ small enough or this diffeomorphism displaces all the points of $F$ in a left-neighbourhood of $q$. Hence either $\xi \eta^{-1} \succeq 1$ or $\xi \eta^{-1} \preceq 1$. By invariance of the order $\preceq$ under right-multiplication, we deduce that either $\xi \succeq \eta$ or $\xi \preceq \eta$. The order $\preceq$ is total. 

Now, let us prove that it is Archimedean. Let $\xi_{1}$ and $\xi_{2}$ be elements of $\mathcal{H}(F,q)$ such that $\xi_{1} \succ 1$. If $\xi_{2} \preceq 1$, then $\xi_{2} \preceq 1 \prec \xi_{1}$ so we suppose that $\xi_{2} \succ 1$ in what follows.

Take respective representatives $g_{1}$ and $g_{2}$ of $\xi_{1}$ and $\xi_{2}$ in $H$.
Take $\alpha \in (0,\alpha_{0})$ small enough such that
\begin{enumerate}
\item The diffeomorphisms $g_{1}$ and $g_{2}$ are increasing on $[q-\alpha,q]$.
\item For any point $x$ in $[q-\alpha,q) \cap F$, $g_{1}(x) >x$ and $g_{2}(x) >x$.
\end{enumerate}
Fix a point $x_{0}$ in $F \cap [q-\alpha,q)$. Observe that any positive orbit under $g_{1}$ of points of $F \cap [q-\alpha,q)$ converges to the point $q$. Hence there exists $k>0$ such that
$g_{2}(x_{0}) < g_{1}^{k}(x_{0})$, which can be rewritten
$$g_{2}(x_{0}) < g_{1}^{k}g_{2}^{-1}(g_{2}(x_{0})).$$
As the diffeomorphism $g_{1}^{k}g_{2}^{-1}$ is increasing on $[g_{2}(x_{0}),q]$ then, by hypothesis of Lemma \ref{Holder}, one of the following occurs.
\begin{enumerate}
\item Either the diffeomorphism $g_{1}^{k}g_{2}^{-1}$ pointwise fixes a left-neighbourhood of the point $q$ in $F$. In this case $\xi_{1}^{k} \xi_{2}^{-1}=1$.
\item  Or it has no fixed point in $[g_{2}(x_{0}),q] \cap F$ in which case
$$\forall x \in F \cap [g_{2}(x_{0}),p], \ g_{1}^{k} \circ g_{2}^{-1}(x) >x.$$
\end{enumerate}
In either case, $\xi_{1}^{k} \xi_{2}^{-1} \succeq 1$ and, by invariance of the relation $\preceq$ under right-multiplication, $\xi_{1}^{k} \succeq \xi_{2}$. The order $\preceq$ is Archimedean.
\end{proof}

\begin{proof}[Proof of Proposition \ref{minabelian}]
We denote by $K'_{1}$ the set of accumulation points of $K_{1}$. By minimality of $K_{1}$, observe that either $K'_{1} \cap (p-\alpha_{0}, p)= \emptyset$ or $K'_{1} \cap (p-\alpha_{0},p)= K_{1}$. Indeed, if the set $K'_{1}\cap (p-\alpha_{0},p)$ is nonempty, then $K'_{1} \cap (p-\alpha_{0},p)$ is an element of $\mathcal{M}$ which is contained in $K_{1}$, hence which is equal to $K_{1}$ by minimality of $K_{1}$. This remark splits the proof into two cases.

\textbf{First case}: $K'_{1}\cap (p-\alpha_{0},p)= \emptyset$. As any point of the set $K_{1}$ is isolated, any element of $G_{1}$ either pointwise fixes all the points of $K_{1}$ or displaces all the points of $K_{1}$ on a left neighbourhood of the point $p$ (neighbourhood which a priori depends on the element of $G_{1}$). Lemma \ref{Holder} implies that the group $\mathcal{G}_{1}(K_{1},p)$ is abelian.

\textbf{Second case}: $K'_{1} \cap (p-\alpha_{0}, p)=K_{1}$. In this second case, it also suffices to prove that any element of $G_{1}$ satisfies the hypothesis of Lemma \ref{Holder}. However, proving this fact is harder than in the first case.

Suppose for a contradiction that there exists $\alpha>0$ and a diffeomorphism $g_{1}$ in $G_{1}$ such that
\begin{enumerate}
\item The diffeomorphism $g_{1}$ is increasing on $[p-\alpha,p]$.
\item There exists a point $x$ in $K_{1} \cap [p-\alpha,p)$ such that $g_{1}(x)=x$.
\item The diffeomorphism $g_{1}$ does not fix all the points of $[x,p] \cap K_{1}$.
\end{enumerate} 

Let $p_{1}<p_{2}$ be two points of $[x,p] \cap \mathrm{Fix}(g_{1})$ such that $K_{1} \cap (p_{1},p_{2}) \neq \emptyset$ and, for any point $x$ in $K_{1} \cap (p_{1},p_{2})$, $g_{1}(x) \neq x$. In this second case, Proposition \ref{minabelian} is a consequence of the two following lemmas.

\begin{lemma} \label{crossing}
There exists an element $h$ of $G_{1}$ such that $p_{1} < h(p_{1}) < p_{2}$ and the diffeomorphism $h$ is increasing on $[p_{1},p]$.
\end{lemma}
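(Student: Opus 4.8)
The plan is to produce $h$ as a word in the finite generating set $S$ of $G_1$ whose successive applications to $p_1$ never leave the interval $(p-\alpha_0,p)$, and to get the existence of such a word from the minimality of the local invariant set $K_1$. The point is that on $(p-\alpha_0,p)$ every generator is increasing, so a word all of whose intermediate images of $p_1$ stay in that interval will automatically be increasing on $[p_1,p]$ as well.

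The first thing I would do is note that we may take $p_1\in K_1$. Since $x\in K_1$ with $g_1(x)=x$, since $g_1(p)=p$ (as $p\in\mathrm{Fix}(G_1)$), and since $g_1$ does not pointwise fix $[x,p]\cap K_1$, one can pick $y\in K_1\cap(x,p)$ with $g_1(y)\ne y$ and then set $p_1:=\sup\{t\in K_1\cap\mathrm{Fix}(g_1):t\le y\}$ and $p_2:=\inf\{t\in\mathrm{Fix}(g_1):t\ge y\}$; closedness of $K_1$ and of $\mathrm{Fix}(g_1)$ gives $p_1\in K_1\cap\mathrm{Fix}(g_1)$ and $p_2\in\mathrm{Fix}(g_1)$, and by construction $x\le p_1<y<p_2\le p$, $K_1\cap(p_1,p_2)\ne\emptyset$, and $g_1$ has no fixed point in $K_1\cap(p_1,p_2)$. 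So this pair satisfies all the requirements on $p_1,p_2$, with the bonus that $p_1\in K_1$.

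Next I would record the standard consequence of minimality of $K_1$: for any $z\in K_1$, the ``local orbit'' $\mathcal O(z)$ --- the set of points $s_k\cdots s_1(z)$ (and $z$ itself), $s_i\in S$, all of whose partial images $s_j\cdots s_1(z)$ lie in $(p-\alpha_0,p)$ --- is dense in $K_1$. Indeed $\mathcal O(z)\subset K_1$ by an immediate induction from the defining property of the family $\mathcal M$, and $\overline{\mathcal O(z)}$ is a nonempty closed subset of $(p-\alpha_0,p)\cap K$ which again belongs to $\mathcal M$ (to check $s(\overline{\mathcal O(z)})\cap(p-\alpha_0,p)\subset\overline{\mathcal O(z)}$ for $s\in S$ one only needs that $s$ is increasing, hence continuous, on $[p-\alpha_0,p]$), so it equals $K_1$ by minimality. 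Applying this with $z=p_1$: the set $(p_1,p_2)\cap K_1$ is nonempty, relatively open in $K_1$, and does not contain $p_1$, so there is a nonempty word $h=s_k\cdots s_1\in G_1$ with all partial images $s_j\cdots s_1(p_1)$ in $(p-\alpha_0,p)$ and $h(p_1)\in(p_1,p_2)\cap K_1$; in particular $p_1<h(p_1)<p_2$.

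The one remaining point --- and the step I expect to require the most care --- is that this $h$ is increasing on $[p_1,p]$; this is exactly why one must route the orbit of $p_1$ through the minimal set $K_1$ instead of taking an arbitrary element of $G_1$ moving $p_1$ (which need not be monotonous on $[p_1,p]$ at all). By Proposition~\ref{minpoint} and the choice of $\alpha_0$, each $s_j$ is increasing on $[p-\alpha_0,p]$ and fixes $p$. One then shows by induction on $j$ that $s_j\cdots s_1$ is increasing on $[p_1,p]$ and maps it onto $[\,s_j\cdots s_1(p_1),\,p\,]$, which is contained in $(p-\alpha_0,p]$ since its left endpoint is a partial image of $p_1$; the inductive step is just precomposing with the increasing map $s_{j+1}$ on $[p-\alpha_0,p]$. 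This finishes the proof of Lemma~\ref{crossing}.
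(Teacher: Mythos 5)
Your proposal is correct and follows essentially the same route as the paper's proof: there, too, one considers the words in $S$ whose partial products keep $[p_{1},p]\cap K$ (or, in your version, just the point $p_{1}$) inside $(p-\alpha_{0},p]$, uses minimality of $K_{1}$ in the family $\mathcal{M}$ to conclude that the resulting local orbit of $p_{1}$ is dense in $K_{1}$ and hence meets $(p_{1},p_{2})$, and observes that such words are automatically increasing on $[p_{1},p]$. Your preliminary reduction arranging $p_{1}\in K_{1}$ is a useful clarification rather than a deviation, since the paper's assertion that the orbit $B=\{b(p_{1})\}$ lies in (and is dense in) $K_{1}$ implicitly relies on exactly this point.
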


Before stating the second lemma, we need a definition which is a generalization of a standard definition for pseudogroups or groups of homeomorphisms of the real line (see \cite[Definition 2.2.43]{Nav}).

\begin{definition}
Two elements $g$ and $h$ of $\mathfrak{diff}^{1+Lip}(K)$ are \emph{crossed} if there exists points $p_{1}<p_{2}$ of the Cantor set $K$ such that
\begin{enumerate}
\item The diffeomorphisms $g$ and $h$ are increasing on $[p_{1},p_{2}]$.
\item The points $p_{1}$ and $p_{2}$ are fixed under $g$ but the diffeomorphism $g$ has no fixed point in $(p_{1},p_{2})$.
\item Either we have $p_{1}<h(p_{1})<p_{2}$ or $p_{1}<h(p_{2})<p_{2}$.
\end{enumerate}
\end{definition}

Observe that the element $g_{1}$ which is defined above and the element $h$ which is given by Lemma \ref{crossing} are crossed. Now, it suffices to use Lemma \ref{crossed} below to obtain the wanted contradiction.

\begin{lemma} \label{crossed}
If two elements $g$ and $h$ of $\mathfrak{diff}^{1+Lip}(K)$ are crossed, then the subgroup generated by $g$ and $h$ contains a free semi-group on two generators.
\end{lemma}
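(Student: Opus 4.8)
The plan is to exhibit two explicit words $\alpha,\beta$ in $g$ and $h$ together with a subset $X\subseteq K$ for which $\alpha(X)$ and $\beta(X)$ are disjoint nonempty subsets of $X$, and then to apply the positive ping-pong lemma (Lemma~\ref{positivepingpong}) with $A=\alpha(X)$, $B=\beta(X)$: from $A,B\subseteq X$ one gets $\alpha(A\cup B)\subseteq\alpha(X)=A$ and $\beta(A\cup B)\subseteq\beta(X)=B$, so the subsemigroup of $\langle g,h\rangle$ generated by $\alpha,\beta$ is free, which is what we want. Beforehand I would make two reductions. First, conjugating everything by $x\mapsto -x$ (an isomorphism $\mathfrak{diff}^{1+Lip}(K)\to\mathfrak{diff}^{1+Lip}(-K)$ that does not affect whether the generated group contains a free subsemigroup) swaps the roles of $p_{1}$ and $p_{2}$, so I may assume we are in the first alternative of the definition of crossed, i.e. $c:=h(p_{1})\in(p_{1},p_{2})\cap K$. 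Second, I will normalize the dynamics of $g$.

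For this normalization step, let $\tilde g$ be an increasing extension of $g$ to $[p_{1},p_{2}]$, and let $(a,b)$ be the connected component of $(p_{1},p_{2})\setminus\mathrm{Fix}(\tilde g)$ containing $c$ (well defined since $c\in K$ and $g$ has no fixed point in $(p_{1},p_{2})$). Then $a,b\in\mathrm{Fix}(\tilde g)$ and $\tilde g-\mathrm{id}$ has constant sign on $(a,b)$, say $\tilde g>\mathrm{id}$ there after possibly replacing $g$ by $g^{-1}$; hence the forward $\tilde g$-orbit of $c$ increases to $b$, the backward one decreases to $a$, and both stay in $(a,b)\cap K$. The crucial point is that $a$ cannot lie in a gap of $K$: such a gap would be a bounded interval $(\mu,\nu)$ with $\mu,\nu\in K$, $\mu<a<\nu$, so every point of $K\cap(a,\infty)$ would be $\geq\nu$, contradicting that the backward $g$-orbit of $c$ tends to $a$ while remaining in $K\cap(a,b)$. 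Thus $a\in K$, so $a$ is a fixed point of $g$ lying in $K$; since $p_{1}\le a<b\le p_{2}$ and $g$ has no fixed point in $(p_{1},p_{2})$, this forces $a=p_{1}$, and symmetrically $b=p_{2}$. Consequently $\mathrm{Fix}(\tilde g)\cap(p_{1},p_{2})=\emptyset$, $g(x)>x$ for all $x\in(p_{1},p_{2})\cap K$, and for every such $x$ one has $g^{n}(x),g^{-n}(x)\in(p_{1},p_{2})\cap K$ with $g^{n}(x)\to p_{2}$ and $g^{-n}(x)\to p_{1}$ as $n\to+\infty$; in particular $p_{1}$ is accumulated on the right by $K$.

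Now for the ping-pong construction, I would fix $d\in(p_{1},p_{2})$ with $\tilde h(d)<p_{2}$ (possible as $\tilde h(p_{1})=c<p_{2}$), set $X:=K\cap[p_{1},d]$ and $d_{K}:=\max X$, which lies in $(p_{1},p_{2})\cap K$ by the previous step. Since $h$ is increasing on $[p_{1},p_{2}]$ and preserves $K$, $h(X)=K\cap[c,h(d_{K})]$ with $p_{1}<c\le h(d_{K})\le\tilde h(d)<p_{2}$. Using the $g$-dynamics above, I choose $n$ large enough that $g^{-n}(c)<d$ and $g^{-n}(h(d_{K}))<d$, and put $\beta:=g^{-n}\circ h$; then $\beta(X)=K\cap[g^{-n}(c),g^{-n}(h(d_{K}))]\subseteq X$. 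Then I choose $m>n$ large enough that $g^{m-n}(c)>d_{K}$, which (applying the increasing map $\tilde g^{-m}$ on $[p_{1},p_{2}]$) is equivalent to $g^{-m}(d_{K})<g^{-n}(c)$, and I put $\alpha:=g^{-m}$; then $\alpha(X)=K\cap[p_{1},g^{-m}(d_{K})]\subseteq X$ because $g^{-m}(d_{K})<g^{-n}(c)<d$, and $\alpha(X)\cap\beta(X)=\emptyset$ because one set lies left of $g^{-m}(d_{K})$ and the other right of $g^{-n}(c)$. Both $\alpha(X)$ and $\beta(X)$ are nonempty, being images of the nonempty set $X$ under homeomorphisms of $K$, so Lemma~\ref{positivepingpong} applies and concludes the proof.

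The step I expect to be the main obstacle is the interaction between the Cantor structure and the dynamics. On one hand, one must rule out that the extension $\tilde g$ has fixed points hidden in the gaps of $K$ inside $(p_{1},p_{2})$; this is what makes $g$ act on $(p_{1},p_{2})\cap K$ like a north--south map, and it genuinely uses that $K$ is a Cantor set (a gap would trap the backward orbit of $c$). On the other hand, in the case $p_{1}<h(p_{1})<p_{2}$ one has no control on $h$ away from the single point $p_{1}$, so the entire ping-pong has to be localized near $p_{1}$ and carried out with iterates of $g$ and just one application of $h$; making the two images $\alpha(X)$ and $\beta(X)$ disjoint then reduces to contracting $X$ towards $p_{1}$ along two $g$-orbits at suitably different rates, which is exactly why the construction needs the two-stage choice of $n$ and then $m>n$.
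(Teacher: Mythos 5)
Your proof is correct and takes essentially the same route as the paper's: after reducing to the case $p_{1}<h(p_{1})<p_{2}$ and showing that (up to replacing $g$ by $g^{-1}$) every point of $K\cap(p_{1},p_{2})$ is moved towards an endpoint by $g$ --- a normalization you justify more carefully than the paper does, by excluding fixed points of the extension $\tilde g$ hidden in gaps of $K$ --- you conclude with the positive ping-pong lemma applied near $p_{1}$ to a large power of the contracting element and a word containing a single $h$, which is exactly the paper's scheme with $f_{1}=g^{N}$ and $f_{2}=h\circ g^{N}$ acting on $[p_{1},p_{1}+\alpha]\cap K$ and its image under $h$. One small correction: the asserted equalities $h(X)=K\cap[c,h(d_{K})]$ and $\beta(X)=K\cap[g^{-n}(c),g^{-n}(h(d_{K}))]$ may fail, since the homeomorphism $h$ of $K$ can send points of $K$ lying outside $[p_{1},p_{2}]$ into these ranges, but the inclusions ``$\subseteq$'' do hold and are all that your containment and disjointness arguments use, so nothing breaks.
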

\end{proof}

Now, let us prove Lemmas \ref{crossing} and \ref{crossed}.

\begin{proof}[Proof of Lemma \ref{crossing}] Denote by $\mathcal{B}$ the set of elements $g$ of $G_{1}$  with the following property. There exists a family $(s_{i})_{1 \leq i \leq p}$ of elements of $S$ such that $g=s_{1}s_{2} \ldots s_{p}$ and 
$$ \forall 1 \leq i \leq p, \  s_{i}s_{i+1} \ldots s_{p}([p_{1},p] \cap K) \subset (p-\alpha_{0},p].$$
Now let $A$ be the closure in $K_{1}$ of the subset of $K \cap (p-\alpha_{0},p)$ defined by 
$$B=\left\{ b(p_{1}), b \in \mathcal{B} \right\}.$$
Observe that, as the subset $K_{1}$ belongs to the collection $\mathcal{M}$, we have $A \subset K_{1}$. Moreover, the set $A$ belongs to the collection $\mathcal{M}$ so that $A=K_{1}$, by minimality of $K_{1}$. Hence the set $B$ is dense in $K_{1}$ and $B \cap(p_{1},p_{2}) \neq \emptyset$ as $K_{1} \cap (p_{1},p_{2}) \neq \emptyset$. This proves Lemma \ref{crossing}.
\end{proof}

The proof of Lemma \ref{crossed} is similar to \cite[Lemma 2.2.44]{Nav}.

\begin{proof}[Proof of Lemma \ref{crossed}]
Suppose for instance that $p_{1}< h(p_{1}) < p_{2}$. Moreover, as the diffeomorphism $g$ is increasing on $[p_{1},p_{2}]$, taking $g^{-1}$ instead of $g$ if necessary, we can suppose that
$$ \forall x \in (p_{1},p_{2}) \cap K, \ g(x) < x.$$
Observe that, for any point $x$ in $(p_{1},p_{2}) \cap K$, the sequence $g^{n}(x)$ converges to the point $p_{1}$. In particular, the point $p_{1}$ is accumulated on the right by points of $K$. Take $\alpha>0$ small enough so that
\begin{enumerate}
\item $p_{1} + \alpha \in K$ and the diffeomorphism $h$ is monotonous on $[p_{1},p_{1}+\alpha]$.
\item $h(p_{1}+ \alpha) < p_{2}$.
\item The sets $[p_{1},p_{1}+\alpha]$ and $h([p_{1},p_{1}+\alpha] \cap K)$ are disjoint.
\end{enumerate}
Take a sufficiently large integer $n$ such that $g^{n}(h(p_{1}+\alpha)) <p_{1}+\alpha$. Let $f_{1}=g^{n}$ and $f_{2}=h \circ g^{n}$. Observe that
$$ f_{1}([p_{1},p_{1}+\alpha]\cap K \cup h([p_{1},p_{1}+\alpha]\cap K)) \subset [p_{1},p_{1}+\alpha] \cap K$$
and that
$$ f_{2}([p_{1},p_{1}+\alpha] \cap K \cup h([p_{1},p_{1}+\alpha] \cap K)) \subset h([p_{1},p_{1}+\alpha] \cap K).$$
Now, by the positive ping-pong lemma (Lemma \ref{positivepingpong}), the semigroup generated by $f_{1}$ and $f_{2}$ is free.
\end{proof}

\subsubsection{Second step}

Now, we are ready for the second step of the proof of Proposition \ref{accumulation}. We will reformulate this second step as two propositions. Fix a point $p$ in $\mathrm{Fix}(G_{1})$ which is accumulated on the left by the set $K$. Denote by $\mathcal{G}_{1,p}$ the group of left-germs at $p$ of elements of $G_{1}$: this the quotient of the group $G_{1}$ by the normal subgroup
$$G_{1, p,-}= \left\{ g \in G_1 \ | \ \exists \alpha>0, \ g_{K\cap [p-\alpha,p]}= Id_{F\cap [p-\alpha,p]} \right\}.$$
 
\begin{proposition} \label{outsidemin}
Suppose that the point $p$ is isolated on the left in $\mathrm{Fix}(G_{1})$. Then the group $\mathcal{G}_{1,p}$ is abelian.
\end{proposition}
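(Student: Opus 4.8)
The plan is to obtain Proposition \ref{outsidemin} as an application of Lemma \ref{Holder} with $H=G_1$, $F=K$ and $q=p$ (using the left-germs): its conclusion is then exactly that $\mathcal{G}_1=G_1/\equiv_{p,-}$ is abelian. So everything reduces to checking the hypothesis of Lemma \ref{Holder}, namely that there is $\alpha_0>0$ such that, whenever $g\in G_1$, $\alpha\in(0,\alpha_0)$, $g$ is increasing on $[p-\alpha,p]$ and $g$ fixes some point $x$ of $[p-\alpha,p]\cap K$, then $g$ pointwise fixes $[x,p]\cap K$. As in subsection \ref{43}, I would take $\alpha_0$ small enough that $\mathrm{Fix}(G_1)\cap[p-\alpha_0,p)=\emptyset$, that the elements of a fixed symmetric generating set of $G_1$ are increasing on $[p-\alpha_0,p]$, and that $\alpha_0$ is at most the radius furnished by Corollary \ref{nofixedpoint}; denote by $h_0\in G_1$ the element of that corollary, so that $h_0$ is increasing on $[p-\alpha_0,p]$, $h_0(p)=p$ and $h_0(z)>z$ for every $z\in[p-\alpha_0,p]\cap K$. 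One checks that $h_0$ then maps $[p-\alpha_0,p]\cap K$ into itself and that the forward $h_0$-orbit of any point of that set is an increasing sequence converging to a fixed point of $h_0$ in $[p-\alpha_0,p]\cap K$, hence to $p$.

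Suppose the hypothesis fails, say for $g\in G_1$ and $\alpha\in(0,\alpha_0)$, with $g$ increasing on $[p-\alpha,p]$, $g(x)=x$ for some $x\in[p-\alpha,p)\cap K$, but $g$ not pointwise fixing $[x,p]\cap K$. Picking $y\in[x,p)\cap K$ with $g(y)\neq y$ (note $p\in\mathrm{Fix}(G_1)\subseteq\mathrm{Fix}(g)$, so $y<p$) and setting $p_1=\max(\mathrm{Fix}(g)\cap K\cap[x,y])$ and $p_2=\min(\mathrm{Fix}(g)\cap K\cap[y,p])$, one obtains points $p-\alpha\le p_1<p_2\le p$ of $K$, both fixed by $g$, with $K\cap(p_1,p_2)\neq\emptyset$, no fixed point of $g$ in $K\cap(p_1,p_2)$, and $g$ increasing on $[p_1,p_2]$; moreover $p_1$, and $p_2$ when $p_2<p$, lie in $[p-\alpha_0,p)$ and hence not in $\mathrm{Fix}(G_1)$. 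The goal is to produce an element $h$ of $G_1$ which is crossed with $g$ (in the sense of the definition preceding Lemma \ref{crossed}) with respect to the interval $[p_1,p_2]$: Lemma \ref{crossed} then yields a free subsemigroup on two generators inside $G_1\subseteq G$, contradicting the standing hypothesis on $G$, which finishes the proof.

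If $p_2=p$ this is immediate: $h_0$ is increasing on $[p_1,p]\subseteq[p-\alpha_0,p]$ and $p_1<h_0(p_1)<h_0(p)=p=p_2$. So assume $p_2<p$. Since $h_0^{\,n}(p_1)\to p$, there is a least $m\ge 1$ with $h_0^{\,m}(p_1)\ge p_2$; if $m\ge 2$ then $h:=h_0^{\,m-1}$ is increasing on $[p_1,p_2]$ with $p_1<h(p_1)<p_2$, so $g$ and $h$ are crossed. The genuinely delicate case, and the main obstacle of the proof, is $m=1$, i.e. $h_0$ "jumps over" the whole gap $(p_1,p_2)$. Here I would first use the second assertion of Proposition \ref{minabelian}: either $g$ is fixed-point-free on a left neighbourhood of $p$ — in which case the gap of $\mathrm{Fix}(g)$ immediately below $p$ has right endpoint $p$ and one is back in the easy case above, with $p_1$ replaced by that gap's left endpoint — or $g$ pointwise fixes the local minimal set $K_1$ of subsection \ref{43}. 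In the latter situation $(p_1,p_2)$ is squeezed inside a component of $K\setminus K_1$ delimited by two $K_1$-points fixed by $g$, and $h_0$ merely slides that component onto a disjoint one; one must then exploit the structure of $K_1$ more finely — that $\mathcal{G}_1(K_1)$ is abelian and that $h_0$ acts on $K_1$ near $p$ without fixed points — in order to relocate the gap, or to replace $h_0$ by an appropriate power or conjugate whose displacement at an endpoint of the (relocated) gap is smaller than the gap itself, and so still exhibit a pair of crossed elements. Throughout, keeping all iterates inside $[p-\alpha_0,p]$, where $h_0$ is controlled by Corollary \ref{nofixedpoint}, is the principal technical difficulty.
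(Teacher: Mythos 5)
There is a genuine gap, and it sits exactly where the content of the proposition lies. Your plan is to verify the hypothesis of Lemma \ref{Holder} for $H=G_1$, $F=K$, $q=p$ directly, by producing, from a failure of that hypothesis, an element of $G_1$ crossed with $g$ and invoking Lemma \ref{crossed}. The easy cases you treat ($p_2=p$, or some power of $h_0$ landing $p_1$ strictly inside $(p_1,p_2)$) are fine, but the case you flag as the ``main obstacle'' --- when every relevant iterate of $h_0$ jumps over the whole gap $(p_1,p_2)$ --- is left unresolved, and the hints you give (exploit $K_1$ more finely, replace $h_0$ by a power or conjugate whose displacement is smaller than the gap) do not obviously lead anywhere: when $(p_1,p_2)$ is an entire gap of $K$ (or of the local minimal set), no element of $G_1$ need map $p_1$ or $p_2$ into $(p_1,p_2)$, so no crossing with $g$ itself need exist. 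This is precisely why the paper does not attempt a direct crossing argument for arbitrary elements of $G_1$.

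The paper's route is structurally different and uses machinery absent from your sketch. It first proves Proposition \ref{outsideminweak} ($\mathcal{G}_1$ is metabelian): for $\xi_1,\xi_2\in\mathcal{G}'_1$ it works with representatives in $G'_1$, and the problematic configuration (an element with a fixed point inside a gap of $\mathrm{Fix}(G_2)$ where it is not the identity) is killed by the distortion estimate of Lemma \ref{distortion} combined with Lemma \ref{semifree}. That distortion argument is not available in your setting: it uses crucially that $h\in G'_1$, so that $h$ fixes the points $f_1^n(a)$ (endpoints of gaps of $\mathrm{Fix}(G'_1)$, which accumulate at $p$ by Proposition \ref{firststep}) and that $h'(p)=1$; a general element of $G_1$ fixes nothing near $p$ besides $p$ and may have $h'(p)\neq 1$. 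Then the paper upgrades metabelian to abelian via Rosenblatt's theorem (a metabelian group with no free subsemigroup is nilpotent), picks a nontrivial central element $\xi\in\mathcal{G}'_1$, uses the Kopell-type Lemma \ref{Kopell} to get a representative $g_\xi$ without fixed points on a left-neighbourhood of $p$, and only then verifies the hypothesis of Lemma \ref{Holder}: for a bad $g$ one forms $h=[g_\xi,g]$, shows via centrality and Lemma \ref{semifree} that $h$ fixes $K$ near $p_1$ (Lemma \ref{commutator}), and then Kopell's lemma applied to the commuting pair $(g_\xi,g)$ forces $g$ to be fixed-point free there, a contradiction. None of these ingredients --- the restriction to $G'_1$ with derivative $1$ at $p$ and accumulating fixed points, the distortion estimates, Rosenblatt, Kopell, and the use of a central element --- appear in your proposal, so the missing case is not a technicality you could patch with the tools you name; it is the part of the proof that requires the new ideas.
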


Of course, we have an analogous statement in the case where the point $p$ is isolated on the right in $\mathrm{Fix}(G_{1})$.

In the case where the point $p$ is accumulated on the left by points of $\mathrm{Fix}(G_{1})$, we will prove the following stronger proposition.

\begin{proposition} \label{outsideminacc}
Suppose the point $p$ is accumulated on the left by points of $\mathrm{Fix}(G_{1})$. Then there exists a left-neighbourhood $L_{p}$ of the point $p$ with the following properties.
\begin{enumerate}
\item $L_{p} \cap K$ is invariant under the action of $G_{1}$.
\item The action of $G'_{1}$ on $L_{p} \cap K$ is trivial.
\end{enumerate}
\end{proposition}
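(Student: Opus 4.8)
The plan is to take the wanted neighbourhood to be a segment $L_{p}=[q,p]$ whose left endpoint $q$ is a point of $\mathrm{Fix}(G_{1})$ chosen close enough to $p$; invariance then comes almost for free, and the triviality of the $G'_{1}$-action is obtained by combining a H\"older-type argument with the crossed-elements criterion (Lemma \ref{crossed}).

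\emph{Construction of $L_{p}$ and proof of (1).} Fix a finite symmetric generating set $S$ of the finitely generated group $G_{1}$. By Definition \ref{intrinsic}, choose an open interval $I\ni p$ on which each $s\in S$ coincides (on $I\cap K$) with a $C^{1+Lip}$-diffeomorphism $\tilde{s}\colon I\to\tilde{s}(I)$; since $\tilde{s}'(p)=s'(p)>0$ by Proposition \ref{minpoint}, after shrinking $I$ we may assume every $\tilde{s}$ is increasing on $I$. By the hypothesis of the proposition, pick $q\in\mathrm{Fix}(G_{1})\cap I$ with $q<p$, and set $L_{p}=[q,p]$. As $q,p\in\mathrm{Fix}(G_{1})$, each $\tilde{s}$ fixes $q$ and $p$, hence maps $[q,p]$ onto itself; so each $s\in S$ preserves $L_{p}\cap K$ and acts on it as the restriction of an increasing homeomorphism of $[q,p]$. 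Since these extensions map $[q,p]$ onto $[q,p]$, the same holds for every word in $S$: every $g\in G_{1}$ preserves $L_{p}\cap K$ and acts there by an order-preserving map which, on every subinterval of $[q,p]$, is the restriction of an increasing diffeomorphism. This is (1), and it lets us speak of crossed elements on subintervals of $[q,p]$.

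\emph{Proof of (2).} A point of $L_{p}\cap K$ lies either in $\mathrm{Fix}(G_{1})$, where it is fixed by $G'_{1}\subseteq G_{1}$, or in a connected component $(a,b)$ of $[q,p]\setminus\mathrm{Fix}(G_{1})$, whose endpoints lie in $\mathrm{Fix}(G_{1})$; so it suffices to prove that $G'_{1}$ acts trivially on $(a,b)\cap K$ for each such component. Fix one. As above $(a,b)\cap K$ is $G_{1}$-invariant, the $G_{1}$-action on it is order-preserving, and no point of it is fixed by all of $G_{1}$. If every element of $G_{1}$ is either the identity on $(a,b)\cap K$ or free of fixed points in $(a,b)\cap K$, then the group of restrictions acts freely and order-preservingly on the totally ordered set $(a,b)\cap K$; the bi-invariant Archimedean order argument used in the proof of Lemma \ref{Holder} (whose hypothesis is vacuous here, the action being genuinely free) together with Lemma \ref{Archimedean} shows this group is abelian, so $G'_{1}$ is trivial on $(a,b)\cap K$. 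It remains to exclude the possibility that some $g\in G_{1}$ fixes some but not all points of $(a,b)\cap K$.

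\emph{Excluding the bad case --- the main obstacle.} Suppose such a $g$ exists. Choosing $y\in(a,b)\cap K$ with $g(y)\neq y$ and setting $p_{1}=\max(\mathrm{Fix}(g)\cap[a,y])$, $p_{2}=\min(\mathrm{Fix}(g)\cap[y,b])$, one obtains $p_{1}<p_{2}$ with $(p_{1},p_{2})\cap K\neq\emptyset$, $g$ increasing on $[p_{1},p_{2}]$, $\mathrm{Fix}(g)\cap(p_{1},p_{2})=\emptyset$, $(p_{1},p_{2})\cap\mathrm{Fix}(G_{1})=\emptyset$, and (since at least one of $p_{1},p_{2}$ lies in $(a,b)$) one of the two, say $p_{1}$, is not fixed by $G_{1}$. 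The remaining --- and hardest --- point is to produce $h\in G_{1}$, increasing on $[p_{1},p_{2}]$, with $p_{1}<h(p_{1})<p_{2}$: then $g$ and $h$ are crossed and Lemma \ref{crossed} yields a free subsemigroup on two generators inside $G$, a contradiction with the hypothesis on $G$. To get $h$ one adapts the argument of Lemma \ref{crossing}: one looks at the images of $p_{1}$ under products of generators of $G_{1}$ whose subwords keep $[p_{1},p]\cap K$ inside $[q,p]$, and extracts, via Zorn's lemma relative to the interval $(p_{1},p_{2})$ (which sits inside a single gap of $\mathrm{Fix}(G_{1})$), a minimal locally invariant closed set; minimality together with $(p_{1},p_{2})\cap K\neq\emptyset$ then forces some image of $p_{1}$ into $(p_{1},p_{2})$, producing $h$. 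The delicate part is precisely this relativisation: since here $\mathrm{Fix}(G_{1})$ accumulates at $p$, the naive minimal-set construction of the first step degenerates into singletons of $\mathrm{Fix}(G_{1})$, so one must work inside a fixed gap and use the accumulation statement of Proposition \ref{firststep}. This contradiction shows the bad case never occurs, completing the proof of (2).
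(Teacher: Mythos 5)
Your construction of $L_{p}$ and your proof of (1) are essentially the paper's, and your treatment of a component $(a,b)$ of $[q,p]\setminus\mathrm{Fix}(G_{1})$ on which every element of $G_{1}$ acts either trivially or without fixed points is exactly Lemma \ref{Holderbis}. The genuine gap is your ``bad case''. First, this case is not one you can hope to exclude: an element $g$ of $G_{1}$ (as opposed to $G'_{1}$) fixing some but not all points of $(a,b)\cap K$ is in no conflict with the absence of free subsemigroups. Think of a configuration in the gap where $g$ fixes a sequence of points accumulating only at $a$ and $b$ and commutes with a fixed-point-free ``translation'' $h$; this is compatible with $G_{1}$ abelian, it satisfies every order-theoretic property your argument uses, and in it no element of $G_{1}$ sends $p_{1}$ or $p_{2}$ into $(p_{1},p_{2})$ (the orbits of $p_{1},p_{2}$ stay in $\mathrm{Fix}(g)$), so no crossed pair exists and no ping-pong contradiction can be extracted. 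Note that the paper's proof never rules this case out either. Second, and relatedly, your mechanism for producing the crossing element breaks down: in Lemma \ref{crossing} the interval $(p_{1},p_{2})$ is chosen so as to meet the minimal locally invariant set $K_{1}$, and minimality makes the admissible orbit of $p_{1}\in K_{1}$ dense in $K_{1}$, which is what forces an orbit point into $(p_{1},p_{2})$. In your situation $[a,b]\cap K$ is honestly $G_{1}$-invariant and, by Proposition \ref{minpoint}, the only minimal sets it contains are the singletons $\left\{ a\right\}$ and $\left\{ b\right\}$; Zorn's lemma therefore yields no density statement at all, and nothing forces any image of $p_{1}$ or $p_{2}$ into $(p_{1},p_{2})$.

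What has to be proved in the bad case is not a contradiction but, still, that $G'_{1}$ acts trivially on $(a,b)\cap K$, and this is the actual content of the paper's proof, which you are missing. Since $b$ is isolated on the left in $\mathrm{Fix}(G_{1})$ (and accumulated on the left by $K$), Proposition \ref{outsidemin} makes the left-germ group at $b$ abelian, so every element of $G'_{1}$ fixes a left-neighbourhood of $b$ in $K$; combining this with Corollary \ref{nofixedpoint} and Lemma \ref{semifree}, the paper upgrades the germ statement to a definite interval: $[b',b]\cap K\subset\mathrm{Fix}(G'_{1})$ for some $b'<b$ (Lemma \ref{nbhdacc}), and similarly $[a,a']\cap K\subset\mathrm{Fix}(G'_{1})$. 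Then the closure $A$ of the set of points of $[a,b]\cap K$ displaced by some element of $G'_{1}$ is a compact $G_{1}$-invariant set (normality of $G'_{1}$) contained in $[a',b']\subset(a,b)$; it contains a minimal set for $G_{1}$, hence a fixed point of $G_{1}$ by Proposition \ref{minpoint}, contradicting $(a,b)\cap\mathrm{Fix}(G_{1})=\emptyset$. Your H\"older dichotomy does not substitute for this argument, so as it stands your proposal only settles the special case where no element of $G_{1}$ has a partial fixed-point set in the gap.
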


Once again, there is an analogous statement for points which are accumulated on the right by points of $\mathrm{Fix}(G_{1})$.

The two above propositions and their variants for right-neighbourhoods imply Proposition \ref{accumulation}.

We will start by proving Proposition \ref{outsidemin}. Then we will prove Proposition \ref{outsideminacc}. 

To prove Proposition \ref{outsidemin}, we need the following lemma, which is a variant of Kopell lemma for diffeomorphisms of the half-line (see \cite{Kop} to see this lemma and its proof).

\begin{lemma} \label{Kopell}
Let $g_{1}$ and $g_{2}$ be elements of $\mathfrak{diff}^{1+Lip}(K)$. Let $p \in \mathrm{Fix}(g_{1}) \cap \mathrm{Fix}(g_{2})$ be a point accumulated on the left by points of $K$ such that the elements $g_{1}$ and $g_{2}$ have a positive derivative at $p$.
Suppose that
\begin{enumerate}
\item There exists $\alpha>0$ such that $(p-\alpha,p) \cap \mathrm{Fix}(g_{1})= \emptyset$ and such that $g_{2}$ is increasing on $(p-\alpha,p)$.
\item For any point $x$ in $(p-\alpha,p)\cap K$, $g_{1}g_{2}(x)=g_{2}g_{1}(x)$.
\end{enumerate}
Then $(p-\alpha,p) \cap \mathrm{Fix}(g_{2}) = \emptyset$ or the diffeomorphism $g_{2}$ fixes all the points of $K$ in a left-neighbourhood of the point $p$.
\end{lemma}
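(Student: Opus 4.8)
This is the Cantor analogue of Kopell's lemma, and the plan is to follow Kopell's original argument, the two new features being that $\mathrm{Fix}(g_2)$ need not be a union of intervals and that the maps in play are only germs of $C^{1+Lip}$ diffeomorphisms along $K$. The statement says, contrapositively, that if $g_2$ has a fixed point in $(p-\alpha,p)\cap K$ then $g_2$ is the identity on $K$ near $p$; so assume $\mathrm{Fix}(g_2)\cap(p-\alpha,p)\cap K\neq\emptyset$. After a routine reduction (shrinking $\alpha$) we may assume $g_1$ and $g_2$ extend to increasing $C^{1+Lip}$ diffeomorphisms of $[p-\alpha,p]$, still denoted $g_1,g_2$. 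I also claim that, possibly after replacing $g_1$ by $g_1^{-1}$ and shrinking $\alpha$ once more, one may assume $g_1(x)>x$ for \emph{every} $x\in K\cap(p-\alpha,p)$: indeed a point of $K$ can never be mapped by $g_1$ into a gap of $K$, so, since $g_1$ is increasing and has no fixed point on $K\cap(p-\alpha,p)$, the sign of $g_1(x)-x$ cannot change across a gap of $K$ and is therefore constant on a left-neighbourhood of $p$ in $K$. Consequently the forward $g_1$-orbit of any point of $K\cap(p-\alpha,p)$ increases and converges to $p$, while staying inside $(p-\alpha,p)\cap K$.

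\textbf{Step 1: $\mathrm{Fix}(g_2)$ accumulates at $p$ from the left.} Pick $x_*\in\mathrm{Fix}(g_2)\cap(p-\alpha,p)\cap K$. Since $x_*\notin\mathrm{Fix}(g_1)$, the orbit $(g_1^{n}(x_*))_{n\geq 0}$ is well defined, increases and tends to $p$. Using the commutation relation $g_1g_2(x)=g_2g_1(x)$ on $(p-\alpha,p)\cap K$ and induction, every $g_1^{n}(x_*)$ is fixed by $g_2$. Hence $\mathrm{Fix}(g_2)\cap K$ contains a sequence converging to $p$ from the left.

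\textbf{Step 2: the distortion estimate.} Assume, for contradiction, that $g_2$ does not fix $K$ pointwise on any left-neighbourhood of $p$; then there is $y\in K\cap(p-\alpha,p)$ with $g_2(y)\neq y$, and after replacing $g_2$ by $g_2^{-1}$ if necessary (which affects neither the conclusion nor $\mathrm{Fix}(g_2)$, hence not Step 1) we may assume $g_2(y)>y$. Let $v:=\min\bigl(\mathrm{Fix}(g_2)\cap K\cap[y,p]\bigr)$; since $p\in\mathrm{Fix}(g_2)$ this is well defined, $v>y$ as $y\notin\mathrm{Fix}(g_2)$, and $v<p$ by Step 1, and $y<g_2(y)<g_2(v)=v$ since $g_2$ is increasing. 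As $g_1^{n}(y)\nearrow p$ we fix $N_0\geq 1$ with $G:=g_1^{N_0}$ satisfying $G(y)>v$; the map $G$ is still increasing $C^{1+Lip}$ with $G(x)>x$ on $K\cap(p-\alpha,p)$. Put $y_n:=G^{n}(y)$, $v_n:=G^{n}(v)$. By commutation $g_2(y_n)=G^{n}(g_2(y))=:y'_n$ and $g_2(v_n)=v_n$, and one checks $y_n<y'_n<v_n<y_{n+1}\nearrow p$. The intervals $G^{j}([y,G(y)])=[y_j,y_{j+1}]$, $j\geq 0$, are pairwise disjoint of total length $p-y$, so a telescoping computation using that $\log G'$ is Lipschitz along these orbits (this is where $C^{1+Lip}$ is used) yields a constant $D\geq 1$, \emph{independent of $n$}, with $(G^{n})'(s)/(G^{n})'(t)\leq D$ for all $s,t\in[y,G(y)]$ and all $n$. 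Applying the mean value theorem to $G^{n}$ on $[y,g_2(y)]$ and on $[y,v]$ and comparing through $D$ gives $\delta_0>0$ with $(y'_n-y_n)/(v_n-y_n)\geq\delta_0$ for all $n$. Since $g_2$ sends $y_n$ to $y'_n$ and fixes $v_n$, $\int_{y_n}^{v_n}g_2'(x)\,dx=v_n-y'_n\leq(1-\delta_0)(v_n-y_n)$, so $g_2'$ takes on $[y_n,v_n]$ a value $\leq 1-\delta_0$; letting $n\to\infty$, so that $[y_n,v_n]$ shrinks to $\{p\}$, continuity of $g_2'$ gives $g_2'(p)\leq 1-\delta_0<1$.

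\textbf{Step 3 and the main difficulty.} But $g_2$ is increasing near $p$ with $g_2(p)=p$ and now $g_2'(p)<1$, hence $g_2(x)>x$ for every $x\in K$ in a left-neighbourhood of $p$; so $\mathrm{Fix}(g_2)$ does not accumulate at $p$ from the left, contradicting Step 1. Therefore $g_2$ fixes $K$ pointwise on a left-neighbourhood of $p$, the second alternative of the lemma. I expect Step 2 to be the main obstacle: obtaining the distortion bound $D$ \emph{uniformly in $n$} is precisely what forces the passage from $g_1$ to a high power $G=g_1^{N_0}$ (so the iterated intervals $[y_j,y_{j+1}]$ are genuinely disjoint and their lengths summable), and one must keep checking that all orbits involved stay inside $[p-\alpha,p]$, where the $C^{1+Lip}$ extensions and the Lipschitz control of $\log G'$ are available — the constant-sign reduction of the first paragraph being exactly what makes this work. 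Comparing the two mean-value points through $D$, and then extracting $g_2'(p)<1$ from the resulting average estimate, is the device replacing, in the Cantor setting, the classical step where one exploits that $g$ fixes both endpoints of a shrinking fundamental domain.
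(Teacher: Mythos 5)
Your proof is correct in substance, but its engine is genuinely different from the paper's. You share the same setup (reduce to $g_{1}(x)>x$ on $K$ near $p$; push a $g_{2}$-fixed point with the $g_{1}$-orbit to see that $\mathrm{Fix}(g_{2})$ accumulates at $p$ from the left), but from there the paper runs the classical Kopell scheme: it writes $g_{2}^{k}=g_{1}^{-n}g_{2}^{k}g_{1}^{n}$, lets $n\to\infty$, and bounds $(g_{2}^{k})'$ on $K\cap[p',p]$ uniformly in $k$ by $e^{k_{1}D}$ via bounded distortion of $g_{1}^{n}$ along disjoint fundamental domains $[g_{1}^{i}(p'),g_{1}^{i+1}(p'))$; it then extends the bound into the gaps of $K$ using the Lipschitz constant of $\log\tilde{g}_{2}'$ and concludes with the mean value theorem that $g_{2}$ fixes $[p',p]\cap K$. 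You never iterate $g_{2}$ at all: you use bounded distortion of the iterates of a power $G$ of $g_{1}$ on the single interval $[y,G(y)]$ to transport the definite relative displacement $(g_{2}(y)-y)/(v-y)$ up to $p$, deduce $g_{2}'(p)\le 1-\delta_{0}<1$, and contradict the accumulation of $g_{2}$-fixed points at $p$. Both proofs use exactly the same $C^{1+Lip}$ distortion input for $g_{1}$; yours dispenses with the second Lipschitz constant and the gap-by-gap analysis of $\tilde{g}_{2}$, which makes it arguably cleaner. Two small points of hygiene: in the last step the derivative values $\le 1-\delta_{0}$ must be taken for one fixed $C^{1}$ extension of $g_{2}$ defined on an interval containing $p$ (different local extensions of $g_{2}$ agree only on $K$, so "continuity of $g_{2}'$" must refer to that single extension), and $y$ should be chosen close enough to $p$ that a single extension of $g_{1}$ covers $[y,p]$ — you flag the second issue yourself, and since $g_{2}$ is assumed nontrivial on every left-neighbourhood such a choice of $y$ is available.

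The one genuinely flawed sentence is your justification of the constant-sign reduction. From "a point of $K$ is never mapped into a gap" it does not follow that the sign of $g_{1}(x)-x$ cannot change across a gap: with $g_{1}(a)<a$ at the left endpoint and $g_{1}(b)>b$ at the right endpoint of a gap $(a,b)$, both images lie in $K$ and nothing is contradicted (the increasing extension simply has a fixed point inside the gap). What is true, and what the argument needs, is that the opposite transition is impossible: if $g_{1}(x)>x$ and $g_{1}(y)<y$ with $x<y$ in $K\cap(p-\alpha,p)$, then $g_{1}$ maps $[x,y]\cap K$ into itself and an intermediate-value argument carried out inside $K$ (look at the supremum of $\{z\in[x,y]\cap K:\ g_{1}(z)>z\}$ and use continuity of $g_{1}$ on $K$) produces a fixed point of $g_{1}$ in $K$, contrary to hypothesis. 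Hence the sign of $g_{1}-\mathrm{id}$ on $K$ is constant only on a possibly smaller left-neighbourhood of $p$, and after this shrinking one must still have a $g_{2}$-fixed point in the region where $g_{1}>\mathrm{id}$ to launch Step 1. The paper makes the very same reduction without comment, and in all its applications the element playing the role of $g_{1}$ is supplied by Corollary \ref{nofixedpoint} with $g_{1}(x)>x$ on the whole interval, so this does not separate your proof from the paper's; but your stated reason should be replaced by the argument above.
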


Of course, we have an analogous lemma for fixed points of $g_{1}$ which are isolated on the right. In the classical Kopell lemma, we only need that $g_{1}$ is $C^{1+bv}$ and $g_{2}$ is $C^{1}$. In contrast, in our case, we can lower the regularity to $C^{1+bv}$ but we still need both elements $g_{1}$ and $g_{2}$ to have a $C^{1+bv}$ regularity. The proof of this lemma is closely related to the proof in the standard case.

\begin{proof}[Proof of Lemma \ref{Kopell}]
Take $\alpha>0$ such that $g_{1}$ has no fixed point on $[p-\alpha,p)$ and such that there exist $C^{1+Lip}$-diffeomorphisms $\tilde{g}_{1}$ and $\tilde{g}_{2}$ defined on $[p-\alpha,p]$ such that $g_{1 \ | [p-\alpha,p] \cap K}= \tilde{g}_{1 \ | [p-\alpha,p] \cap K}$ and $g_{2 \ | [p-\alpha,p] \cap K}= \tilde{g}_{2 \ | [p-\alpha,p] \cap K}$. For $i=1,2$, denote by $k_{i}$ the Lipschitz constant of $\log(| \tilde{g}_{i}'|)_{|[p-\alpha,p]}$ and by $D$ the diameter of the compact subset $K$ of $\mathbb{R}$.

Suppose that $g_{2}$ has a fixed point $p'$ in $[p-\alpha,p)$. We will prove that there exists $M>0$ such that, for any $k>0$, $ \sup_{x \in [p',p] \cap K} |( g_{2}^k)'(x) | \leq M$. From this, we will deduce that $g_{2}$ fixes the points of $[p',p] \cap K$.

Taking $g_{1}^{-1}$ instead of $g_{1}$ if necessary, we can suppose that, for any point $x$ in $[p-\alpha,p) \cap K$, $\tilde{g}_{1}(x)>x$. As the diffeomorphisms $g_{1}$ and $g_{2}$ commute the points $g_{1}^{n}(p')$, for $n \geq 0$ are fixed points of $g_{2}$ and form a sequence which converges to the point $p$. Hence $g_{2}'(p)=1$.

For any $k \geq 0$ and any $n \geq 0$, $g_{2}^{k}=g_{1}^{-n}g_{2}^{k}g_{1}^{n}$. Hence, for any point $x$ of $K \cap [p',p]$, $$(*) \ \ (g_{2}^{k})'(x)= \frac{(g_{1}^{n})'(x)}{(g_{1}^{n})'(g_{2}^{k}(x))}.(g_{2}^{k})'(g_{1}^{n}(x)).$$
Observe that the sequence $((g_{2}^{k})'(g_{1}^{n}(x)))_{n}$ converges to $1=(g_{2}^{k})'(p)$ as $n \rightarrow + \infty$. Let us prove that
$$ \frac{(g_{1}^{n})'(x)}{(g_{1}^{n})'(g_{2}^{k}(x))} \leq e^{k_{1}D}.$$
Indeed,
$$ \begin{array}{rcl}
 \log(|\frac{(g_{1}^{n})'(x)}{(g_{1}^{n})'(g_{2}^{k}(x))}|) & = & \displaystyle \sum_{i=0}^{n-1} \log(|g_{1}'(g_{1}^{i}(x))|)-\log(|g_{1}'(g_{1}^{i}(g_{2}^{k}(x)))|).
 \end{array}$$
 For any index $i \geq 0$, denote by $I_{i}$ the closed interval of $\mathbb{R}$ whose ends are $g_{1}^{i}(x)$ and $g_{1}^{i}(g_{2}^{k}(x))$ and denote by $n_{0} \geq 0$ the integer such that the point $x$ belongs to the interval $[g_{1}^{n_{0}}(p'),g_{1}^{n_{0}+1}(p'))$. Observe that, for any $i \geq 0$, $I_{i} \subset [g_{1}^{n_{0}+i}(p'),g_{1}^{n_{0}+i+1}(p'))$. Hence the intervals $I_{i}$, for $i \geq 0$ are pairwise disjoint and
 $$ \begin{array}{rcccc}
 \log(|\frac{(g_{1}^{n})'(x)}{(g_{1}^{n})'(g_{2}^{k}(x))}|) &\leq & \displaystyle k_{1} \sum_{i=0}^{n-1} |I_{i}| & \leq & k_{1} D.
 \end{array}$$ 
Then, by $(*)$, for any $k>0$, $ \sup_{x \in [p',p] \cap K} |( g_{2}^k)'(x) | \leq M$, where $M=e^{k_{1}D}$.

Now let us prove that the diffeomorphism $g_{2}$ fixes the points in $[p',p] \cap K$. Suppose for a contradiction that there exists 
 a point $x_{0}$ in $[p',p] \cap K$ which is not fixed under $g_{2}$. Let $I$ be the connected component of $[p',p] - \mathrm{Fix}(g_{2})$ which contains $x_{0}$. Take a point $y$ in $I-K$ and let $(y_{-},y_{+})$ be the connected component of $I-K$ which contains $y$. Then, for any $k>0$,
$$\frac{(\tilde{g}_{2}^{k})'(y)}{(\tilde{g}_{2}^{k})'(y_{-})} \leq e^{k_{2}D}.$$
Hence $(\tilde{g}_{2}^{k})'(y) \leq M e^{k_{2}D}$. This implies that, for any $k \geq 0$, $ \sup_{x \in I} |( \tilde{g}_{2}^k)'(x) | \leq Me^{k_{2}D}$. Hence, using the mean value theorem, we see that the diffeomorphism $\tilde{g}_{2}$ has to fix the points of $I$, a contradiction.
\end{proof}

Propostion \ref{outsidemin} is a consequence of the following weaker Proposition.

\begin{proposition} \label{outsideminweak}
Suppose that the point $p$ is isolated on the left in $\mathrm{Fix}(G_{1})$. Then the group $\mathcal{G}_{1,p}$ is metabelian, \emph{i.e.} the group $\mathcal{G}'_{1}$ is abelian.
\end{proposition}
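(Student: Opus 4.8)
The plan is to reduce the statement to a H\"older‑type dichotomy governed by Lemma~\ref{Holder}, in which the forbidden alternative produces a crossing in the sense of the definition preceding Lemma~\ref{crossed}, hence a free sub‑semigroup on two generators inside $G$, contradicting the standing hypothesis on $G$. We may assume that $p$ is accumulated on the left by $K$: otherwise $K$ has an empty left‑neighbourhood of $p$, the group $\mathcal{G}_{1}$ of left‑germs at $p$ is trivial and the conclusion is immediate. Under this assumption Corollary~\ref{nofixedpoint} furnishes an element $h\in G_{1}$ which is increasing on some $[p-\alpha,p]$ and satisfies $h(x)>x$ for every $x\in[p-\alpha,p]\cap K$, while Propositions~\ref{firststep} and~\ref{minabelian} provide the local minimal set $K_{1}$ accumulating at $p$ from the left, together with the facts that $\mathcal{G}_{1}(K_{1})=G_{1}/\equiv_{K_{1},p,-}$ is abelian and that the dichotomy of Proposition~\ref{minabelian}(2) holds.

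First I would observe that, since $\mathcal{G}_{1}(K_{1})$ is a quotient of $\mathcal{G}_{1}$ (restrict left‑germs on $K$ to left‑germs on $K_{1}$) and is abelian, the derived subgroup $\mathcal{G}'_{1}$ is contained in the kernel $N$ of $\mathcal{G}_{1}\to\mathcal{G}_{1}(K_{1})$; by Proposition~\ref{minabelian}(2), $N$ is precisely the group of left‑germs at $p$ of those elements of $G_{1}$ which pointwise fix $K_{1}$. Consequently every $g\in G'_{1}$ fixes $K_{1}$ on a left‑neighbourhood of $p$ and, being increasing there (its derivative at $p$ is positive by Proposition~\ref{minpoint}), it preserves each connected component of $\mathbb{R}\setminus K_{1}$ that is close enough to $p$ together with its trace on $K$. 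This localizes the problem: for $g_{1},g_{2}\in G'_{1}$, the commutator $[g_{1},g_{2}]$ lies in $G'_{1}$, hence is automatically trivial on $K_{1}$ near $p$, so $\mathcal{G}'_{1}$ is abelian as soon as any two elements of $G'_{1}$ commute on $K\cap(a,b)$ for every gap $(a,b)$ of $K_{1}$ whose right endpoint is close enough to $p$. Thus it suffices to understand the $G'_{1}$‑action inside such gaps.

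The heart of the proof is then to show, via Lemma~\ref{Holder}, that no $g\in G'_{1}$ can be increasing on an interval $[p_{1},p_{2}]$ with $p_{1},p_{2}\in\mathrm{Fix}(g)$, $K\cap(p_{1},p_{2})\neq\emptyset$ and $g$ having no fixed point of $K$ in $(p_{1},p_{2})$, when this interval sits inside a gap of $K_{1}$ close to $p$; granting this, Lemma~\ref{Holder}, applied on the relevant clopen piece of $K$ with $q$ the corresponding endpoint of $K_{1}$ (which lies in $\mathrm{Fix}(G'_{1})$), forces fixed points of elements of $G'_{1}$ to propagate, so that the $G'_{1}$‑action inside each such gap is abelian, and with the previous paragraph this gives that $\mathcal{G}'_{1}$ is abelian, i.e. that $\mathcal{G}_{1}$ is metabelian. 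To rule out such a $g$, I would transport the fixed‑point‑free element $h$ of Corollary~\ref{nofixedpoint} down to the scale of $[p_{1},p_{2}]$ by conjugating it with an element of $G_{1}$ obtained from the minimality of the $G_{1}$‑action on $K_{1}$, so as to produce an element of $G_{1}$ sending an endpoint of $(p_{1},p_{2})$ strictly into $(p_{1},p_{2})$; this element is crossed with $g$, and Lemma~\ref{crossed} then exhibits a free sub‑semigroup on two generators inside $G$, a contradiction. When the transported element instead ``overshoots'' $(p_{1},p_{2})$, I would use Kopell's Lemma~\ref{Kopell} to force the two relevant commuting‑up‑to‑germ elements to actually commute on a left‑neighbourhood of the relevant endpoint, which is again incompatible with the assumed non‑propagation of the fixed point.

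The step I expect to be the main obstacle is this last one. The difficulty is that $\mathrm{Fix}(G_{1})$ is empty on a left‑neighbourhood of $p$ — this is exactly the hypothesis that $p$ is isolated on the left in $\mathrm{Fix}(G_{1})$ — so inside a gap of $K_{1}$ there is no invariant structure to anchor the argument, and one has to manufacture the crossing element by conveying the dynamics of $h$ from near $p$ into the gap while controlling whether the image of an endpoint of $(p_{1},p_{2})$ lands inside it or past it; handling the overshoot case, possibly by iterating the same circle of ideas (Lemma~\ref{Holder}, the crossing argument, Kopell's Lemma~\ref{Kopell}) on the finer scale, is where the $C^{1+\mathrm{Lip}}$‑distortion estimates and the minimality of the action on $K_{1}$ are genuinely used, everything else being essentially formal.
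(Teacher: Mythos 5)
Your reduction is sound and close in spirit to the paper's: since $\mathcal{G}_{1}(K_{1})$ is abelian, every element of $G'_{1}$ fixes $K_{1}$ pointwise on a left-neighbourhood of $p$, and it then suffices to make two representatives $g_{1},g_{2}\in G'_{1}$ (and the group they generate) commute on the trace of $K$ in each gap close to $p$, via a H\"older-type argument; the paper runs the very same dichotomy, with $K_{2}=\mathrm{Fix}(\langle g_{1},g_{2}\rangle)$ playing the role of your $K_{1}$, and with Lemma \ref{Holderbis} (not Lemma \ref{Holder}, which is the germ version) as the tool on a gap with fixed endpoints. The genuine gap is in the step you yourself flag as the obstacle: ruling out an element $g$ of the generated group which fixes two points $p_{1},p_{2}$ inside a gap, with $K\cap(p_{1},p_{2})\neq\emptyset$ but no fixed point of $g$ in $(p_{1},p_{2})$. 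Your plan is to manufacture an element crossed with $g$ by transporting $h$ via the minimality of the action on $K_{1}$; but minimality only controls local orbits of points of $K_{1}$, and $(p_{1},p_{2})$ lies inside a gap of $K_{1}$, so density of those orbits in $K_{1}$ produces nothing landing strictly inside $(p_{1},p_{2})$ -- in general no element of $G_{1}$ sends an endpoint of $(p_{1},p_{2})$ strictly into it. (Contrast this with Lemma \ref{crossing}, where the crossing element exists precisely because the relevant interval meets $K_{1}$.) Your fallback for the overshoot case also fails: Kopell's Lemma \ref{Kopell} takes commutation on a neighbourhood as a hypothesis, which is exactly what is being proved here, so it cannot be invoked, and ``iterating the same circle of ideas at a finer scale'' is not an argument.

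The paper resolves this case by a different mechanism, which your sketch does not reconstruct. It never produces a crossing element. Instead it picks $h$ in the generated group with $h(p_{0})>p_{0}$ (possible since $p_{0}\notin K_{2}$), takes the component $(a,b)$ of $\mathbb{R}\setminus\mathrm{Fix}(G'_{1})$ containing $p_{0}$, and iterates the fixed-point-free element $f$ of Corollary \ref{nofixedpoint}: the \emph{non}-crossing of $h$ (and $h^{-1}$) with the conjugates $f_{1}^{n}gf_{1}^{-n}$, guaranteed by Lemma \ref{crossed}, leaves only ``fix or overshoot'' for $h(f_{1}^{n}(p_{0}))$, and the overshoot option is excluded for large $n$ by a genuine $C^{1+Lip}$ distortion estimate along the pairwise disjoint intervals $f_{1}^{n}([a,b])$, anchored at the $G'_{1}$-fixed points $f_{1}^{n}(a)$ and at the fact that $h'(p)=1$ (Lemma \ref{distortion}). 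The contradiction then comes not from a crossing but from Lemma \ref{semifree}: the configuration ``$h$ moves $p_{0}$, fixes $f_{1}^{n}(p_{*})$ for all $n\geq 0$ and fixes $f_{1}^{n}(p_{0})$ for all large $n$'' forces the semigroup generated by suitable powers $f_{1}^{N'}$ and $f_{1}^{N'}h$ to be free, contradicting the hypothesis on $G$. These two lemmas are the heart of the proof and are absent from your proposal; without them (or a substitute), the key hypothesis of the H\"older lemma is not verified and the argument does not close.
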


\begin{proof}[Proof of Proposition \ref{outsideminweak}]
By Corollary \ref{nofixedpoint}, there exists an element $f$ of $G_{1}$ and $\alpha'_{0}>0$ such that the diffeomorphism $f$ has no fixed point on $(p-\alpha'_{0},p)$ and is increasing on the interval $[p-\alpha'_{0},p]$. Taking $f^{-1}$ instead of $f$ if necessary, we can suppose that, for any point $x$ in $(p-\alpha'_{0},p) \cap K$, $f(x) >x$.

Fix two nontrivial elements $\xi_{1}$ and $\xi_{2}$ of the group $\mathcal{G}'_{1}$. We want to prove that $\xi_{1} \xi_{2}= \xi_{2} \xi_{1}$. Denote by $\mathcal{G}_{2}$ the subgroup of $\mathcal{G}_{1,p}$ generated by the elements $\xi_{1}$ and $\xi_{2}$. The advantage of considering this subgroup instead of $\mathcal{G}'_{1}$ is that the group $\mathcal{G}_{2}$ is finitely generated whereas the group $\mathcal{G}'_{1}$ might not be finitely generated.

Fix respective representative $g_{1}$ and $g_{2}$ of $\xi_{1}$ and $\xi_{2}$ in the group $G'_{1}$ and let $K_{2} \subset K$ the set of fixed points of the group $G_{2}$ generated by $g_{1}$ and $g_{2}$. Observe that, by Proposition \ref{firststep}, the set $\mathrm{Fix}(G'_{1}) \subset K_{2}$ accumulates on the left of the point $p$. Take a point $p-\alpha'_{0}<p' < p$ such that
\begin{enumerate}
\item The diffeomorphisms $g_{1}$ and $g_{2}$ are increasing on $[p',p]$.
\item The point $p'$ belongs to the set $\mathrm{Fix}(G'_{1})$.
\end{enumerate} 
An easy induction on wordlength proves that any element of $G_{2}$ is increasing on $[p',p]$.

We will distinguish two cases depending on the existence of a fixed point which is outside $K_{2}$ for an element of $G_{2}$. As the elements $\xi_{1}$ and $\xi_{2}$ are supposed to be nontrivial, the set $(K \setminus K_{2}) \cap [p',p]$ is nonempty.

\emph{First case:} Suppose that, for any element $g$ in the group $G_{2}$ and any connected component $(p_{1},p_{2})$ of $[p',p] \setminus K_{2}$ which meets $K$, either
$$ \mathrm{Fix}(g) \cap (p_{1},p_{2}) = \emptyset$$
or $g$ is equal to the identity on $(p_{1},p_{2}) \cap K$.
 Take a connected component $(p_{1},p_{2})$ of $[p',p] \setminus K_{2}$ which meets $K$. Denote by $G_{2|(p_{1},p_{2})}$ be the group of restrictions to $(p_{1},p_{2}) \cap K$ of elements of the group $G_{2}$. We now use the following lemma which is once again a straightforward consequence of Lemma \ref{Archimedean}.

\begin{lemma} \label{Holderbis}
Let $(p'_1,p'_2)$ be an open interval of $\mathbb{R}$ which meets $K$ and whose endpoints belong to the Cantor set $K$. Let $G$ be a subgroup of $\mathfrak{diff}^1(K)$. Suppose that
\begin{enumerate}
\item Any diffeomorphism in the group $G$ preserves $(p'_1,p'_2) \cap K$ and is increasing on the interval $(p'_1,p'_2)$.
\item Any element of $G$ which has a fixed point in $(p'_1,p'_2) \cap K$ is equal to the identity on $(p'_1,p'_2) \cap K$.
\end{enumerate}
Then the group $G_{|(p'_1,p'_2)}$ of restrictions to $(p'_1,p'_2) \cap K$ of elements of $G$ is abelian.
\end{lemma}

\begin{proof}
As any nontrivial element of the group $G$ is increasing on $(p'_1,p'_2)$ and has no fixed point on $(p'_1,p'_2) \cap K$, then, for any nontrivial element $g$ of the group $G_{|(p'_1,p'_2)}$, either
$$ \forall x \in (p'_1,p'_2) \cap K, \ g(x)>x$$
or
$$ \forall x \in (p'_1,p'_2) \cap K, \ g(x)<x.$$
Let $g$ and $h$ be two elements of the group $G$. Hence, if there exists a point $x_{0}$ in $(p'_1,p'_2) \cap K$ such that $g(x_{0}) <h(x_{0})$ then, for any point $x$ of $(p'_1,p'_2) \cap K$, $g(x) < h(x)$: otherwise the diffeomorphism $g^{-1}h$ would be nontrivial and would have a fixed point in $(p'_1,p'_2) \cap K$, which is not possible.
We define then an order $\preceq$ on $G_{|(p'_1,p'_2)}$ by setting $g \preceq h$ if and only if there exists a point $x_{0}$ of $(p'_1,p'_2) \cap K$ such that $g(x_{0}) \leq h(x_{0})$. The above remark proves that this defines a total order on the group $G_{|(p'_1,p'_2)}$. With a proof similar to the proof of Lemma \ref{Holder} (and even easier!), we can show that this defines a biinvariant Archimedean order on the group $G_{|(p'_1,p'_2)}$. By Lemma \ref{Archimedean}, the group $G_{|(p'_1,p'_2)}$ is abelian.
\end{proof}
 
By Lemma \ref{Holderbis}, the group $G_{2|(p_{1},p_{2})}$ is abelian. As this lemma is true for any such connected component $(p_{1},p_{2})$, we deduce that 
 $$ \forall x \in [p',p] \cap K, g_{1}g_{2}(x)=g_{2}g_{1}(x).$$
Hence $\xi_{1} \xi_{2}= \xi_{2} \xi_{1}$.

\emph{Second case:} There exists an element $g$ in the group $G_{2}$ and a point $p_{0}$ in $(K \setminus K_{2}) \cap [p',p]$ with the following properties.
\begin{enumerate}
\item $g(p_{0})=p_{0}$.
\item If we denote by $(p_{1},p_{2})$ the connected component of $[p',p] \setminus K_{2}$ which contains the point $p_{0}$,
$$ g_{|(p_{1},p_{2}) \cap K} \neq Id_{(p_{1},p_{2}) \cap K}.$$
\end{enumerate}
 Without loss of generality, we can suppose that the point $p_{0}$ is one of the endpoints of a connected component of $[p',p] \setminus \mathrm{Fix}(g)$ which meets the Cantor set $K$. We will find a contradiction, namely we will construct a free subsemigroup on two generators of the group $G_{1}$. Hence the first case always holds.

As the point $p_{0}$ does not belong to the set $K_{2}= \mathrm{Fix}(G_{2})$, there exists a diffeomorphism $h$ in $G_{2}$ such that $h(p_{0})>p_{0}$. 

Let $(a,b)$ be the connected component of $\mathbb{R} \setminus \mathrm{Fix}(G'_{1})$ which contains the points $p_{0}$ and $h(p_{0})$. Recall we fixed an element $f$ at the beginning of the proof of the proposition. Take $N'>0$ sufficiently large so that $f^{N'}(a)>b$ and let $f_{1}=f^{N'}$. Observe that the sets $f_{1}^{n}([a,b] \cap K)$, for $n \geq 0$, are pairwise disjoint. Moreover, as the set $\mathrm{Fix}(G'_{1})$ is invariant under the action of the group $G_{1}$, for any $n \geq 0$, the points $f_{1}^{n}(a)$ and $f_{1}^{n}(b)$ are fixed under the elements of $G_{2} < G'_{1}$. 

The proof uses the following lemma which relies on distortion estimates.  

\begin{claim} \label{distortion}
There exists an integer $N \geq 0$ such that, for any $n \geq N$,
$$h(f_{1}^{n}(p_{0}))=f_{1}^{n}(p_{0}).$$
\end{claim}

Before proving this claim, let us see how we can obtain a contradiction from this claim. More precisely, we want to prove that the semigroup generated by $f_{1}$ and $h$ is free, a contradiction. We will use the following lemma to do so.

\begin{lemma} \label{semifree}
Let $p'<p$ be points of $K$. Suppose that the point $p$ is accumulated on the left by points of $K$. Let $f$ and $h$ be diffeomorphisms in $\mathfrak{diff}^{1}(K)$ such that
\begin{enumerate}
\item The diffeomorphism $f$ is increasing on $[p',p]$ and 
$$ \left\{
\begin{array}{l}
f(p)=p \\
\forall x \in [p',p) \cap K, \ f(x)>x
\end{array}
\right.
.$$
\item There exists a point $p_{*} \in (p',p) \cap K$ such that
$$ \forall n \geq 0, h(f^{n}(p_{*}))=f^{n}(p_{*}).$$
\item There exists a point $p_{0}$ in $[p',p_{*}) \cap K$ and an integer $N > 0$ such that
$$\left\{ \begin{array}{l} 
h(p_{0}) \neq p_{0} \\
\forall n \geq N, h(f^{n}(p_{0}))=f^{n}(p_{0}).
\end{array}
\right.$$
\end{enumerate}
Then there exists $N'>0$ such that the semigroup generated by $f^{N'}$ and $f^{N'}h$ is free.
\end{lemma}

To obtain the wanted contradiction, use the above lemma with $f_{1}$, $h$ and $p_{*}=b$. The last hypothesis is satisfied thanks to Claim \ref{distortion}. To complete the proof of Proposition \ref{outsideminweak}, it suffices to prove those two lemmas.

\begin{proof}[Proof of Claim \ref{distortion}]
Recall that the point $p_{0}$ is the endpoint of a connected component of $\mathbb{R} \setminus \mathrm{Fix}(g)$  which meets $K$. Suppose that this connected component is of the form $(p_{0},p_{1})$ (the case where it is of the form $(p_{1},p_{0})$ is analogous). Observe that the interval $(f_{1}^{n}(p_{0}), f_{1}^{n}(p_{1}))$ is a connected component of $\mathbb{R} \setminus \mathrm{Fix}(f_{1}^{n}gf_{1}^{-n})$. Then, as the elements $h$ and $f_{1}^{n}gf_{1}^{-n}$, as well as the elements $h^{-1}$ and $f_{1}^{n}gf_{1}^{-n}$, are not crossed by Lemma \ref{crossed}, either $h(f_{1}^{n}(p_{0}))=f_{1}^{n}(p_{0})$ or $h(f_{1}^{n}(p_{0}))\geq f_{1}^{n}(p_{1})$ or $h(f_{1}^{n}(p_{1})) \leq f_{1}^{n}(p_{0})$: if none of those statements occur,
\begin{enumerate}
\item either $h(f_{1}^{n}(p_{0})) \in (f_{1}^{n}(p_{0}),f_{1}^{n}(p_{1}))$ and the elements $h$ and $f_{1}^{n}gf_{1}^{-n}$ are crossed,
\item or $h(f_{1}^{n}(p_{1})) \in  (f_{1}^{n}(p_{0}),f_{1}^{n}(p_{1}))$ and the elements $h$ and $f_{1}^{n}gf_{1}^{-n}$ are crossed,
 \item or $h(f_{1}^{n}(p_{0})) \leq f_{1}^{n}(p_{0})$ and $h(f_{1}^{n}(p_{1})) \geq f_{1}^{n}(p_{1})$. In this case, $h^{-1}(f_{1}^{n}(p_{0})) \geq f_{1}^{n}(p_{0})$ and $h^{-1}(f_{1}^{n}(p_{1})) \leq f_{1}^{n}(p_{1})$ and the elements $h^{-1}$ and $f_{1}^{n}gf_{1}^{-n}$ are crossed.
\end{enumerate}
Suppose for a contradiction that there exists a sequence of integer $n_{k} \rightarrow +\infty$ as $k \rightarrow +\infty$ such that, for any $k \geq 0$, $h(f_{1}^{n_{k}}(p_{0})) \geq f_{1}^{n_{k}}(p_{1})$. Then, for any $k \geq 0$,

$$ \frac{h(f_{1}^{n_{k}}(p_{0}))-h(f_{1}^{n_{k}}(a))}{f_{1}^{n_{k}}(p_{0})-f_{1}^{n_{k}}(a)}= \frac{h(f_{1}^{n_{k}}(p_{0}))-f_{1}^{n_{k}}(a)}{f_{1}^{n_{k}}(p_{0})-f_{1}^{n_{k}}(a)}$$
as the point $f_{1}^{n_{k}}(a)$ belongs to $\mathrm{Fix}(G'_{1})$ and the diffeomorphism $h$ belongs to $G'_{1}$.
Hence
$$ \begin{array}{rcl}
\frac{h(f_{1}^{n_{k}}(p_{0}))-h(f_{1}^{n_{k}}(a))}{f_{1}^{n_{k}}(p_{0})-f_{1}^{n_{k}}(a)} & \geq &  \frac{f_{1}^{n_{k}}(p_{1})-f_{1}^{n_{k}}(a)}{f_{1}^{n_{k}}(p_{0})-f_{1}^{n_{k}}(a)} \\
 & \geq & \frac{f_{1}^{n_{k}}(p_{1})-f_{1}^{n_{k}}(p_{0})}{f_{1}^{n_{k}}(p_{0})-f_{1}^{n_{k}}(a)}+1.
 \end{array}
$$
Denote by $\tilde{f}_{1}:[a,p] \rightarrow [a,p]$ a $C^{1+Lip}$-diffeomorphism such that $\tilde{f}_{1|[a,p] \cap K}=f_{1|[a,p] \cap K}$. By the mean value theorem, there exist points $c_{1}$ and $c_{2}$ of the interval $(a,b)$ such that
$$\left\{
\begin{array}{l}
\tilde{f}_{1}^{n_{k}}(p_{1})-\tilde{f}_{1}^{n_{k}}(p_{0})=(\tilde{f}_{1}^{n_{k}})'(c_{1})(p_{1}-p_{0}) \\
\tilde{f}_{1}^{n_{k}}(p_{0})-\tilde{f}_{1}^{n_{k}}(a)=(\tilde{f}_{1}^{n_{k}})'(c_{2})(p_{0}-a).
\end{array}
\right.$$

Moreover, if we denote by $K$ the Lipschitz constant of $\log(f')$ then
$$ \begin{array}{rcl} \left| \log((\tilde{f}_{1}^{n_{k}})'(c_{1}))- \log((\tilde{f}_{1}^{n_{k}})'(c_{2})) \right| & \leq & \displaystyle K \sum_{i=0}^{n_{k}-1} \left| \tilde{f}_{1}^{i}([a,b]) \right| \\
 & \leq & K \left| p-a \right|=M,
 \end{array}
$$
because the intervals $\tilde{f}_{1}^{i}([a,b])$ are pairwise disjoint and contained in the interval $(a,p)$. Hence
$$\frac{h(f_{1}^{n_{k}}(p_{0}))-h(f_{1}^{n_{k}}(a))}{f_{1}^{n_{k}}(p_{0})-f_{1}^{n_{k}}(a)}  \geq   e^{-M} \frac{p_{1}-p_{0}}{p_{0}-a} +1 >1.$$
However, recall that fixed points of $h$ accumulate to $p$ because $h \in G'_{1}$. Hence $h'(p)=1$, in contradiction with the above inequality and the continuity of $h'$.

In the case where there exists a sequence of integers $n_{k}$ which tends to $+\infty$ as $k \rightarrow + \infty$ such that, for any $k$, $h(f^{n_{k}}(p_{1})) \leq f^{n_{k}}(p_{0})$ (or $f^{n_{k}}(p_{1}) \leq h^{-1}(f^{n_{k}}(p_{0}))$), we find a similar contradiction by using $h^{-1}$ instead of $h$.
\end{proof}

\begin{proof}[Proof of Lemma \ref{semifree}]
Let $N'-1$ be the largest integer $n$ such that $h(f^{n}(p_{0})) \neq f^{n}(p_{0})$. Let $f_{1}=f^{N'}$. Hence, for any integer $n > 0$, the point $f_{1}^{n}(p_{0})$ is fixed under $h$ (but the point $p_{0}$ is not). We want to prove that the semigroup generated by $f_{1}$ and $g=f_{1}h$ is free. Let
$$ w_{1}= f_{1}^{n_{k}} g^{m_{k}} \ldots f_{1}^{n_{2}} g^{m_{2}} f_{1}^{n_{1}} g^{m_{1}},$$
with $n_{k} \geq 0$, $m_{1} \geq  0$ and  $n_{i}>0$, $m_{i}>0$ otherwise, and  
$$ w_{2}= f_{1}^{n'_{k'}} g^{m'_{k'}} \ldots f_{1}^{n'_{2}} g^{m'_{2}} f_{1}^{n'_{1}} g^{m'_{1}},$$
with $n'_{k'} \geq 0$, $m'_{1} \geq  0$ and  $n'_{i}>0$, $m'_{i}>0$ otherwise, be two distinct words on $f_{1}$ and $g$. We see each of those words as a diffeomorphism in $\mathfrak{diff}^{1}(K)$. Suppose for a contradiction that, as elements of the group $\mathfrak{diff}^{1}(K)$, $w_{1}=w_{2}$. Then, simplifying these words on the right and exchanging the roles of $w_{1}$ and $w_{2}$ if necessary, we can suppose that $m_{1}>0$ and $m'_{1}=0$. Now, let us look at the image of the point $p_{*}$ under those two diffeomorphisms. We have
$$ \left\{
\begin{array}{l}
w_{1}(p_{*})= f_{1}^{n_{1}+m_{1}+n_{2}+m_{2}+ \ldots +n_{k}+m_{k}}(p_{*}) \\
w_{2}(p_{*})= f_{1}^{n'_{1}+m'_{1}+n'_{2}+m'_{2}+ \ldots +n'_{k'}+m'_{k'}}(p_{*})
\end{array}
\right.
.$$

Hence $ \displaystyle \sum_{i=1}^{k} (n_{i}+m_{i})=\sum_{i=1}^{k'} (n'_{i}+m'_{i})$. We denote by $l$ the common value of these sums. Now, we will prove that $w_{1}(p_{0}) \neq w_{2}(p_{0})$, in contradiction with the equality $w_{1}=w_{2}$ as elements of the group $\mathfrak{diff}^{1}(K)$.

We write
$$ w_{1}=w_{3}h$$
where 
$$w_{3}=f_{1}^{n_{k}} g^{m_{k}} \ldots f_{1}^{n_{2}} g^{m_{2}} f_{1}^{n_{1}} g^{m_{1}-1}f_{1}.$$
Observe that $w_{2}(p_{0})=w_{3}(p_{0})=f_{1}^{l}(p_{0})$. As we supposed that $w_{1}=w_{2}$ as elements of $\mathfrak{diff}^{1}(K)$, we have $w_{3}h(p_{0})=w_{1}(p_{0})=w_{2}(p_{0})$ hence $h(p_{0})=p_{0}$, a contradiction.
\end{proof}
\end{proof}

Now, let us deduce Proposition \ref{outsidemin} from Proposition \ref{outsideminweak}.

\begin{proof}[Proof of Proposition \ref{outsidemin}]
 By Proposition \ref{outsideminweak}, the group $\mathcal{G}_{1,p}$ is metabelian, meaning that its derived subgroup is abelian. A theorem by Rosenblatt (see \cite{Ros}) states that any metabelian group without free subsemigroups on two generators is nilpotent. Hence the group $\mathcal{G}_{1,p}$ is nilpotent.

Suppose for a contradiction that the group $\mathcal{G}_{1,p}$ is not abelian and take a nontrivial element $\xi$ in the center of the group $\mathcal{G}_{1,p}$ which belongs to the derived subgroup $\mathcal{G}'_{1}$. 

Recall that, by Corollary \ref{nofixedpoint}, the group $\mathcal{G}_{1,p}$ contains an element whose representative has no fixed point in a left-neighbouhood of the point $p$. Hence, by Lemma \ref{Kopell}, the element $\xi$ has a representative $g_{\xi}$ in $G_{1}$ with no fixed point on a left-neighbourhood of the point $p$.

We want to apply Lemma \ref{Holder} to prove that the group $\mathcal{G}_{1,p}$ is abelian and finish the proof of Proposition \ref{outsidemin}. In the rest of this proof, we make sure that the hypothesis of Lemma \ref{Holder} are satisfied.

Take $\alpha_{0}>0$ such that the diffeomorphisms $g_{\xi}$ and $g_{\xi}^{-1}$ are increasing on $[p-\alpha_{0},p]$ and have no fixed point in $[p-\alpha_{0},p) \cap K$. Taking $\xi^{-1}$ instead of $\xi$ if necessary, we can further suppose that, for any point $x$ of $[p-\alpha_{0},p] \cap K$,
$$ g_{\xi}(x) >x.$$

Suppose for a contradiction that there exist a real number $\alpha \in (0,\alpha_{0})$ and a diffeomorphism $g \in G_{1}$ with the following properties.
\begin{enumerate}
\item The diffeomorphism $g$ is increasing on $[p-\alpha,p]$.
\item The diffeomorphism $g$ has a fixed point $p_{1}$ in $[p-\alpha,p) \cap K$.
\item There exists a point $x_{0}$ in $[p_{1},p] \cap K$ such that $g(x_{0}) \neq x_{0}$.
\end{enumerate}
As $g([p_{1},p])=[p_{1},p]$ and $g_{\xi}([p_{1},p]) \subset [p_{1},p]$, we have 
$$g^{-1}g_{\xi}g([p_{1},p]) \subset [p_{1},p] \subset (p-\alpha_{0},p)$$
and the diffeomorphism $h=[g_{\xi},g]=g_{\xi}^{-1}g^{-1}g_{\xi}g$ is increasing on $[p_{1},p]$. Take $\alpha' >0$ small enough so that $p-\alpha_{0} <p_{1}-\alpha'$ and the diffeomorphism $h$ is increasing on $(p_{1}-\alpha',p]$.

\begin{lemma} \label{commutator}
For any point $x$ in $(p_{1}-\alpha',p] \cap K$, $h(x) = x$
\end{lemma}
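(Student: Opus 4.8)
The plan is to adapt the proof of Kopell's lemma (Lemma~\ref{Kopell}) and of Lemma~\ref{distortion}, using $g_\xi$ as an element contracting towards $p$ from the left.

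I would begin with the elementary reductions. Since $g_\xi$ and $g$ fix $p$ with positive derivative there, the chain rule gives $h'(p)=g_\xi'(p)g'(p)g_\xi'(p)^{-1}g'(p)^{-1}=1$; more generally every element of $G_1'$ has derivative $1$ at each point of $\mathrm{Fix}(G_1)$. Because $\xi$ lies in the centre of $\mathcal{G}_1$, the germ of $h=[g_\xi,g]$ at $p$ is trivial, so there is $\beta\in(0,\,p-(p_1-\alpha'))$ with $h(x)=x$ for every $x\in[p-\beta,p]\cap K$, equivalently $g_\xi g=g g_\xi$ on $[p-\beta,p]\cap K$. Finally, $g_\xi$ is increasing with $g_\xi(x)>x$ on $(p-\alpha_0,p)\cap K$, so every positive $g_\xi$-orbit of a point of $(p-\alpha_0,p)\cap K$ converges monotonically to $p$, and $g_\xi'(p)=1$; replacing $g$ by $g^{-1}$ if necessary (which changes neither $\mathrm{Fix}(h)$, since $h=\mathrm{id}$ on a set iff $g_\xi g=g g_\xi$ there, nor the standing hypotheses on $g$) one may also assume $g'(p)\le 1$.

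Next I would argue by contradiction, in ``no first non-fixed point'' form. Suppose $h$ does not fix all of $(p_1-\alpha',p]\cap K$. Since $\mathrm{Fix}(h)$ is closed, $K$ is perfect, and $h$ is increasing on $(p_1-\alpha',p]$, there is a point $d\in K$ with $p_1-\alpha'\le d\le p-\beta$, accumulated on the left by $K$, such that $h$ fixes every point of $(d,p]\cap K$ while $d$ is accumulated on the left by points of $K$ that $h$ moves. Then $h(d)=d$, and after choosing $\delta>0$ with $h$ increasing on $[d-\delta,d]$ and passing to a subsequence, there are $x_k\in(d-\delta,d)\cap K$ with $x_k\uparrow d$ and $h(x_k)>x_k$ for all $k$ (the case $h(x_k)<x_k$ being symmetric, via $h^{-1}=[g,g_\xi]$). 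Since $g_\xi(d)>d$ and $g_\xi^{\,n}(d)\to p$, each $g_\xi^{\,n}(d)$ lies in $(d,p)\cap K$, hence is fixed by $h$. The heart of the argument is then a distortion estimate in the style of the proofs of Lemma~\ref{Kopell} and Lemma~\ref{distortion}: fix $C^{1+Lip}$ extensions of $g_\xi$ and $g$ near $p$, let $\kappa$ bound the Lipschitz constants of their logarithmic derivatives, and let $D=\mathrm{diam}(K)$. The intervals $\tilde g_\xi^{\,n}([d-\delta,d])$, $n\ge 0$, are pairwise disjoint and contained in $(p-\alpha_0,p)$, so their lengths sum to at most $D$, whence the iterates of $g_\xi$ have distortion bounded by $e^{\kappa D}$ on $[d-\delta,d]$. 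Using the commutator identity $h=g_\xi^{-1}g^{-1}g_\xi g$ together with the commutation $g_\xi g=g g_\xi$ available on $[p-\beta,p]\cap K$ (applied at the points $g_\xi^{\,n}(d)$ and $g_\xi^{\,n}(x_k)$ once $n$ is large enough to push them into $[p-\beta,p]$), I would transport the strict inequality $h(x_k)>x_k$ out to points of the form $g_\xi^{\,n}(x_k)$, which tend to $p$, and then use the bounded-distortion estimate to produce a uniform lower bound $\frac{h(y_n)-h(z_n)}{y_n-z_n}\ge 1+c$ for a fixed $c>0$ and sequences $y_n,z_n\to p$, contradicting $h'(p)=1$ and the continuity of $h'$. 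This contradiction proves the lemma.

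The step I expect to be the main obstacle is precisely this distortion bookkeeping in the Cantor setting: one must control $h$ on the gaps of $K$ through its $C^{1+Lip}$ extensions (which is where $C^{1+Lip}$, rather than merely $C^{1+bv}$, becomes necessary) and compensate for the absence of a $G_1$-invariant order, so that a commutation relation holding only on a germ at $p$ can be propagated outward along the $g_\xi$-orbits.
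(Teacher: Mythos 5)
Your proposal has a genuine gap at its central step, the ``transport'' of the displacement toward $p$. The commutation $g_{\xi}g=gg_{\xi}$ that you extract from the centrality of $\xi$ holds only on a left-neighbourhood $[p-\beta,p]\cap K$ of $p$, i.e.\ exactly on the set where $h$ is already the identity; it gives no information at the points $x_{k}$ near $d$, which lie outside that neighbourhood. Pushing forward by $g_{\xi}^{n}$ does not help: for $n$ large the points $g_{\xi}^{n}(x_{k})$ enter $(d,p]\cap K$, where $h$ \emph{does} fix them, so the inequality $h(x_{k})>x_{k}$ is not transported; and conjugating instead replaces $h$ by $g_{\xi}^{n}hg_{\xi}^{-n}$, a different element about which nothing is known. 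Thus no sequences $y_{n},z_{n}\to p$ with difference quotient bounded away from $1$ are produced, and the intended contradiction with $h'(p)=1$ never materializes. In fact your argument never invokes the standing hypothesis that $G$ contains no free subsemigroup on two generators, and the statement is not a purely local analytic fact: an element can perfectly well be trivial near $p$, have derivative $1$ at $p$, and still move points at a definite distance from $p$, with all your distortion bounds satisfied. Even in Lemma \ref{distortion}, which you take as a model, the large displacements fed into the distortion estimate are forced by the non-crossing dichotomy (Lemma \ref{crossed}), i.e.\ by the free-subsemigroup obstruction, not by a commutation relation.

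The paper's proof is much softer and goes through that group-theoretic hypothesis. From a displaced point $p_{0}\in(p_{1}-\alpha',p]\cap K$ and the triviality of the germ of $h$ at $p$ (centrality of $\xi$), one sets $p_{*}=\inf\{x\in[p_{0},p]:\ h \text{ fixes } [x,p]\cap K\}$ and applies Lemma \ref{semifree} with $f=g_{\xi}$ (which is increasing and fixed-point free on a left-neighbourhood of $p$, so the forward $g_{\xi}$-orbits of $p_{*}$ and, eventually, of $p_{0}$ are fixed by $h$ while $p_{0}$ is not). Lemma \ref{semifree} then yields a free subsemigroup on two generators inside $G_{1}$, contradicting the hypothesis on $G$. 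To repair your proof you would need to reintroduce this mechanism (or the crossing dichotomy) rather than rely on commutation plus bounded distortion alone; the preliminary reductions in your first paragraph are fine but do not carry the weight of the argument.
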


\begin{proof}
Suppose that there exists a point $p_{0}$ in $(p_{1}-\alpha',p]\cap K$ such that $h(p_{0}) \neq p_{0}$. As the element $\xi$ of $\mathcal{G}_{1,p}$ lies in the center of $\mathcal{G}_{1,p}$, the diffeomorphism $h$ pointwise fixes a left-neighbourhood of $p$ in $K$. Let
$$ p_{*}= \inf \left\{ x \in [p_{0},p], \forall y \in [x,p] \cap K, h(y)=y \right\}.$$
Then apply Lemma \ref{semifree} to $h$ and $f=g_{\xi}$ to find a free subsemigroup of $G_{1}$ on two generators, a contradiction.
\end{proof}

By Lemma \ref{commutator}, for any point $x$ in $(p_{1}-\alpha',p]\cap K$,
$$ gg_{\xi}(x)=g_{\xi}g(x).$$
Hence, by Lemma \ref{Kopell}, the diffeomorphism $g$ has no fixed point in $(p_{1}-\alpha',p]$, a contradiction.

Therefore, we can apply Lemma \ref{Holder} and the group $\mathcal{G}_{1,p}$ is abelian.
\end{proof}

\begin{proof}[Proof of Proposition \ref{outsideminacc}]
Fix a finite generating set $S$ of $G_{1}$. As the point $p$ is accumulated on the left by points of $\mathrm{Fix}(G_{1})$, for any diffeomorphism $s$ in $S$, there exists a point $p_{s}<p$ in $\mathrm{Fix}(G_{1})$ such that $s$ is increasing on the interval $[p_{s},p]$. Let
$$ p' = \max \left\{ p_{s} \mid s \in S \right\}.$$
Then any element of $S$ is increasing on $L_{p}=[p',p]$ and preserves $L_{p} \cap K$. Hence any element of $G_{1}= \langle S \rangle$ is increasing on $L_{p}$ and preserves $L_{p} \cap K$.

Now, let us prove by contradiction that the group $G'_{1}$ acts trivially on $L_{p} \cap K$. Suppose there exists a point $x_{0} \in (p',p) \cap K$ which is displaced by some element of $G'_{1}$. Let $(p_{1},p_{2})$ be the connected component of $(p',p) \setminus \mathrm{Fix}(G_{1})$ which contains the point $x_{0}$. Then the points $p_{1}$ and $p_{2}$ belong to $\mathrm{Fix}(G_{1})$. Moreover, the point $p_{1}$ is accumulated on the right by points of $K$: otherwise, if $I$ is connected component of $\mathbb{R} \setminus K$ whose left-end is the point $p_{1}$, then its right-end is a fixed point of $G_{1}$, in contradiction with $(p_{1},p_{2}) \cap \mathrm{Fix}(G_{1}) = \emptyset$. Likewise, the point $p_{2}$ is accumulated on the left by points of $K$. By Corollary \ref{nofixedpoint}, there exists a point $p'_{2}<p_{2}$ of $K$ and an element $f$ of $G_{1}$ such that 
$$ \forall x \in [p'_{2},p_{2}) \cap K, \ f(x) >x.$$
We then need the following lemma.

\begin{lemma} \label{nbhdacc}
$$[p'_{2},p_{2}] \cap K \subset \mathrm{Fix}(G'_{1}).$$
\end{lemma}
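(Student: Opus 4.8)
The plan is to argue by contradiction and produce a free subsemigroup on two generators inside $G_{1}\leq G$ by invoking Lemma \ref{semifree}. First recall the setting of the statement: the point $p_{2}$ lies in $\mathrm{Fix}(G_{1})$, it is isolated on the left in $\mathrm{Fix}(G_{1})$ (because $(p_{1},p_{2})\cap \mathrm{Fix}(G_{1})=\emptyset$) and it is accumulated on the left by points of $K$. Hence Proposition \ref{outsidemin}, applied at the point $p_{2}$, shows that the group of left-germs at $p_{2}$ of elements of $G_{1}$ is abelian; since the quotient map $G_{1}\to\mathcal{G}_{1}$ is a group homomorphism, its restriction to $G'_{1}$ is trivial, i.e.\ \emph{every} element of $G'_{1}$ fixes pointwise a left-neighbourhood of $p_{2}$ in $K$. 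We also keep the element $f\in G_{1}$ and the point $p'_{2}\in K$ furnished by Corollary \ref{nofixedpoint}: after shrinking if necessary we may assume $p'_{2}\in K$, the diffeomorphism $f$ is increasing on $[p'_{2},p_{2}]$, and $f(x)>x$ for every $x\in[p'_{2},p_{2})\cap K$; moreover $f(p_{2})=p_{2}$ as $p_{2}\in\mathrm{Fix}(G_{1})$.

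Now suppose Lemma \ref{nbhdacc} fails, so that there exist $h\in G'_{1}$ and $q\in[p'_{2},p_{2}]\cap K$ with $h(q)\neq q$. Choose $\gamma>0$ small enough that $h$ fixes pointwise $[p_{2}-\gamma,p_{2}]\cap K$ and that $p_{2}-\gamma>p'_{2}$. Then $q<p_{2}-\gamma$, because $h$ fixes every point of $K$ in $[p_{2}-\gamma,p_{2}]$. Since $p_{2}$ is accumulated on the left by $K$, pick a point $p_{*}\in[p_{2}-\gamma,p_{2})\cap K$; thus $p'_{2}<p_{2}-\gamma\leq p_{*}<p_{2}$, one has $q<p_{*}$, and $[p_{*},p_{2}]\cap K\subseteq[p_{2}-\gamma,p_{2}]\cap K\subseteq\mathrm{Fix}(h)$. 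I then verify that $f$, $h$, the point $p_{*}$, and $p_{0}:=q$ satisfy the hypotheses of Lemma \ref{semifree} on the interval $[p',p]:=[p'_{2},p_{2}]$. Hypothesis (1) is exactly the property of $f$ recalled above. For (2): since $f$ is increasing on $[p'_{2},p_{2}]$ with $f(p_{2})=p_{2}$, for every $x\in[p'_{2},p_{2})\cap K$ one has $x<f(x)<p_{2}$ and $f(x)\in K$; hence by induction the orbit $\{f^{n}(p_{*}):n\geq0\}$ is contained in $[p_{*},p_{2}]\cap K\subseteq\mathrm{Fix}(h)$, so $h(f^{n}(p_{*}))=f^{n}(p_{*})$ for all $n\geq0$. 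For (3): the sequence $(f^{n}(q))_{n\geq0}$ is strictly increasing (each term lies in $[p'_{2},p_{2})\cap K$, where $f$ moves points to the right) and bounded above by $p_{2}$, so it converges; the limit is a point of $[p'_{2},p_{2}]\cap K$ fixed by $f$, hence equals $p_{2}$ because $f$ has no fixed point in $[p'_{2},p_{2})\cap K$. Therefore $f^{n}(q)\in[p_{2}-\gamma,p_{2}]\cap K\subseteq\mathrm{Fix}(h)$ for all $n$ larger than some integer $N$, while $h(q)\neq q$; thus (3) holds.

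By Lemma \ref{semifree} there is an integer $N'>0$ such that the subsemigroup of $\mathfrak{diff}^{1+Lip}(K)$ generated by $f^{N'}$ and $f^{N'}h$ is free. As $f$ and $h$ lie in $G_{1}\leq G$, this contradicts the standing hypothesis that $G$ has no free subsemigroup on two generators, and Lemma \ref{nbhdacc} follows. The one point requiring a little care — and the place where a naive approach runs into trouble — is the choice of $p_{*}$: selecting it inside a left-neighbourhood of $p_{2}$ on which $h$ is already known to act trivially (rather than, say, as an infimum of $\mathrm{Fix}(h)$) bypasses any pathology coming from the local structure of the Cantor set $K$ near $p_{*}$, and is what makes the verification of hypotheses (2) and (3) of Lemma \ref{semifree} completely straightforward.
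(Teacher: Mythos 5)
Your proof is correct and follows essentially the same route as the paper's: a contradiction via Lemma \ref{semifree}, using Proposition \ref{outsidemin} to see that $h\in G'_{1}$ fixes a left-neighbourhood of $p_{2}$ in $K$ and the element $f$ from Corollary \ref{nofixedpoint}. The only (harmless) difference is bookkeeping: the paper picks $p_{0}$ just to the left of $p_{max}=\sup\{x: h(x)\neq x\}$ so that hypothesis (3) holds with $N=1$, whereas you keep the original moved point $q$ and let the orbit $f^{n}(q)$ converge to $p_{2}$ to enter the fixed left-neighbourhood.
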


Of course, we can likewise prove that there exists a point $p'_{1}>p_{1}$ of $K$ such that $[p_{1},p'_{1}] \cap K \subset \mathrm{Fix}(G'_{1})$.

Before proving this lemma, let us see why it gives us the wanted contradiction. The set
$$ A = \overline{ \left\{ x \in [p_{1},p_{2}] \cap K \mid \exists g_{1} \in G'_{1}, \ g_{1}(x) \neq x \right\} }$$
is a closed $G_{1}$-invariant set as the group $G'_{1}$ is a normal subgroup of $G_{1}$ and any element of $G_{1}$ preserves $[p_{1},p_{2}] \cap K$. Moreover, by Lemma \ref{nbhdacc}, this set $A$ is contained in $[p'_{1},p'_{2}]$. Also, this set contains a minimal invariant set for the action of $G_{1}$ on $K$, hence a fixed point for $G_{1}$ by Proposition \ref{minpoint}. Hence
$$ \emptyset \neq A \cap \mathrm{Fix}(G_{1}) \subset [p'_{1},p'_{2}] \cap \mathrm{Fix}(G_{1}) \subset (p_{1},p_{2}) \cap \mathrm{Fix}(G_{1}) = \emptyset,$$
a contradiction.
\end{proof}

\begin{proof}[Proof of Lemma \ref{nbhdacc}]
Suppose for a contradiction that there exists a point $x_{0} \in [p'_{2},p_{2}] \cap K$ and an element $h$ in $G'_{1}$ such that $h(x_{0}) \neq x_{0}$. Then let
$$ p_{max}= \sup \left\{ x \in [p'_{2},p_{2}] \cap K, h(x) \neq x \right\}.$$
By Proposition \ref{outsidemin}, $p_{max} < p_{2}$. Observe that this point $p_{max}$ is fixed under the diffeomorphism $h$. 

Finally, take a point $p_{0} < p_{max}$ of $K$ such that $h(p_{0}) \neq p_{0}$ and $f(p_{0}) > p_{max}$. Taking $h^{-1}$ instead of $h$ if necessary, we can suppose that $h(p_{0}) >p_{0}$. Then, by Lemma \ref{semifree}, the group generated by $h$ and $f$ contains a free semigroup on two generators, a contradiction.
\end{proof}

\subsection{End of the proof of Theorem \ref{nosubsemi}} \label{44}

Now, we finish the proof of Theorem \ref{nosubsemi}, namely we prove the following proposition.

\begin{proposition} \label{periodic}
Let $G_1$ be a subgroup of $G$ which satisfies Proposition \ref{minpoint}. Then
its derived subgroup $G'_{1}$ is trivial.
\end{proposition}

\begin{proof}[Proof of Proposition \ref{periodic}]
For any point $x$ in $\mathrm{Fix}(G_{1})$, we want to define a left neighbourhood $L_{x}$ and a right-neighbourhood $R_{x}$ of the point $x$ in $K$ which will be useful for the proof. To define those, we have to distinguish cases.
\begin{enumerate}
\item If the point $x$ is accumulated on the left (respectively the right) by points of $\mathrm{Fix}(G_{1})$, take a left-neighbourhood $L_{x}$ (resp. a right-neighbourhood $R_{x}$) of $x$ such that the set $L_{x}$ (resp. $R_{x}$) is pointwise fixed under the elements of $G'_{1}$ (such a neighbourhood exists by Proposition \ref{outsideminacc}).
\item If the point $x$ is accumulated on the left (resp. the right) by points of $K$ but isolated on the left (resp. the right) in $\mathrm{Fix}(G_{1})$ then take a left-neighbourhood $L_{x}$ (resp. a right-neighbourhood $R_{x}$) of $x$ such that there exists a diffeomorphism $f$ in $G_{1}$ such that
\begin{enumerate}
\item The diffeomorphism $f$ is increasing on $L_{x}$ (resp. $R_{x}$).
\item For any point $y$ in $L_{x} \setminus \left\{ x \right\}$, $f(y)>y$ (resp. for any point $y$ in $R_{x} \setminus \left\{ x \right\}$, $f(y)<y$). 
\end{enumerate}
Such a diffeomorphism $f$ exists by Corollary \ref{nofixedpoint}.
\item If the point $x$ is isolated on the left (resp. on the right) in $K$, take $L_{x}= \left\{ x \right\}$ (resp. $R_{x}= \left\{ x \right\}$).
\end{enumerate}
Let
$$ U= \bigcup_{x \in \mathrm{Fix}(G_{1})} (L_{x} \cup R_{x})$$
and choose the neighbourhoods $L_{x}$ and $R_{x}$ in such a way that the set $U$ is open in $K$.
Suppose for a contradiction that the group $G'_{1}$ contains anontrivial element $h$. Let $A$ be a subset of $K$ consisting of points $x$ of $K$ which are displaced under some element of the group $G'_{1}$, \emph{i.e.} there exists an element $h$ of $G'_{1}$ such that $h(x) \neq x$. Of course, this set is disjoint from the set $\mathrm{Fix}(G_{1})$. We can say even more by the following claim.

\begin{claim} \label{nbhdfixed}
$A \cap U = \emptyset$.
\end{claim}

Before proving the claim, let us see how we can finish the proof of Proposition \ref{periodic}. As the set $U$ is open, $\overline{A} \cap U = \emptyset$. 

As the group $G'_{1}$ is a normal subgroup of the group $G_{1}$, the set $\overline{A}$ is a closed $G_{1}$-invariant subset. Hence there exists a minimal set $M \subset \overline{A}$ for the action of $G_{1}$ on $\overline{A}$. By Proposition \ref{minpoint}, $M \subset \mathrm{Fix}(G_{1}) \subset U$, a contradiction with Claim \ref{nbhdfixed}.
\end{proof}

\begin{proof}[Proof of Claim \ref{nbhdfixed}]
Suppose for a contradiction that $A \cap U \neq \emptyset$ and take a point $p_{0}$ in the intersection $A \cap U$. By definition of the set $A$, there exists an element $h$ in $G'_{1}$ such that $h(p_{0}) \neq p_{0}$.

By definition of $U$, there exists a point $p$ in $\mathrm{Fix}(G_{1})$ such that either $p_{0} \in L_{p}$ or $p_{0} \in R_{p}$. Suppose for instance that the point $p_{0}$ belongs to the left-neighbourhood $L_{p}$ of $p$. Necessarily, the point $p$ is isolated on the left in $\mathrm{Fix}(G_{1})$: otherwise, the diffeomorphisms in $G'_{1}$ pointwise fix $L_{p}$. Moreover, by construction of $L_{p}$ there exists an element $f$ in $G_{1}$ such that
\begin{enumerate}
\item For any point $y$ in $L_{p} \setminus \left\{ p \right\}$, $f(y)>y$.
\item The diffeomorphism $f$ is increasing on $L_{p}$.
\end{enumerate}

Finally, we use Lemma \ref{semifree} (recall that the diffeomorphism $h$ pointwise fixes a neighbourhood of $p$ to find the point $p_{*}$). By this lemma, the group $G_{1}$ contains a free subsemigroup on two generators, a contradiction.
\end{proof}


\begin{thebibliography}{1}

\bibitem{BBG} C. Bleak, H. Bowman, A. Gordon Lynch, G. Graham, J. Hughes, F. Matucci, E. Sapir, \emph{Centralizers in the R. Thompson group Vn}, Groups Geom. Dyn. 7 (2013), no. 4, 821-865. 

\bibitem{BDJ} C. Bleak, C. Donoven, J. Jonušas,\emph{Some isomorphism results for Thompson-like groups Vn(G)}, Israel J. Math. 222 (2017), no. 1, 1-19. 

\bibitem{Bur} W. Burnside, \emph{On an unsettled question in the theory of discontinuous groups}, Quart. J. Math., vol. 33, 1902, 230-238.

\bibitem{FF} B. Farb, J. Franks, \emph{Groups of homeomorphisms of one-manifolds. III. Nilpotent subgroups.} Ergodic Theory Dynam. Systems 23 (2003), no. 5, 1467-1484.

\bibitem{FN} L. Funar, Y. Neretin, \emph{Diffeomorphism groups of tame Cantor sets and Thompson-like groups}, Compos. Math. 154 (2018), no. 5, 1066-1110.

\bibitem{GoS} E. Golod, I. R. Shafarevich, \emph{On the class field tower}, Izv. Akad. Nauk SSSR Ser. Mat. 28, 1964, 261-272.

\bibitem{dlH} P. de la Harpe, \emph{Topics in geometric group theory}, Chicago Lectures in Mathematics. University of Chicago Press, Chicago, IL, 2000.

\bibitem{Hir} M. W. Hirsch, \emph{Differential topology}, corrected reprint of the 1976 original, Graduate Texts in Mathematics, 33, Springer-Verlag, New York, 1994.

\bibitem{Kop} N. Kopell, \emph{Commuting diffeomorphisms}, 1970, Global Analysis (Proc. Sympos. Pure Math., Vol. XIV, Berkeley, Calif., 1968), 165-184, Amer. Math. Soc., Providence, R.I.

\bibitem{Nav} A. Navas, \emph{Groups of circle diffeomorphisms}, translation of the 2007 Spanish edition, Chicago Lectures in Mathematics, University of Chicago Press, Chicago, IL, 2011. 

\bibitem{Nav2} A. Navas, \emph{Growth of groups and diffeomorphisms of the interval}, Geom. and Functional analysis 18(2008), 988-1028.

 
\bibitem{Ros} J. M. Rosenblatt, \emph{Invariant measures and growth conditions}, Trans. Amer. Math. Soc. 193 (1974), 33-53.

\bibitem{Rov} C.E. Rover, \emph{Constructing finitely presented simple groups that contain Grigorchuk groups}, J. Algebra 220 (1999), no. 1, 284-313.

\bibitem{Sch} Schachermayer, W., \emph{Addendum: Une modification standard de la d\'emonstration non standard de Reeb et Schweitzer  Un th\'eor\`eme de Thurston \'etabli au moyen de l'analyse non standard}, Differential topology, foliations and Gelfand-Fuks cohomology (Proc. Sympos., Pontificia Univ. Cat\'olica, Rio de Janeiro, 1976), pp. 139-140. Lecture Notes in Math., Vol. 652, Springer, Berlin, 1978.

\bibitem{Thu}  Thurston, William P., \emph{A generalization of the Reeb stability theorem}, Topology 13 (1974), 347-352.

\end{thebibliography}
\end{document}